\begin{document}
\setcounter{tocdepth}{1}

\newtheorem{theorem}{Theorem}    
\newtheorem{proposition}[theorem]{Proposition}
\newtheorem{conjecture}[theorem]{Conjecture}
\def\theconjecture{\unskip}
\newtheorem{corollary}[theorem]{Corollary}
\newtheorem{lemma}[theorem]{Lemma}
\newtheorem{sublemma}[theorem]{Sublemma}
\newtheorem{fact}[theorem]{Fact}
\newtheorem{observation}[theorem]{Observation}
\theoremstyle{definition}
\newtheorem{definition}{Definition}
\newtheorem{notation}[definition]{Notation}
\newtheorem{remark}[definition]{Remark}
\newtheorem{question}[definition]{Question}
\newtheorem{questions}[definition]{Questions}

\newtheorem{example}[definition]{Example}
\newtheorem{problem}[definition]{Problem}
\newtheorem{exercise}[definition]{Exercise}

\numberwithin{theorem}{section}
\numberwithin{definition}{section}
\numberwithin{equation}{section}

\def\reals{{\mathbb R}}
\def\torus{{\mathbb T}}
\def\integers{{\mathbb Z}}
\def\rationals{{\mathbb Q}}
\def\naturals{{\mathbb N}}
\def\complex{{\mathbb C}\/}
\def\heis{{\mathbb H}\/}
\def\distance{\operatorname{distance}\,}
\def\diststar{{\rm dist}^\star}
\def\sym{\operatorname{Symm}\,}
\def\support{\operatorname{support}\,}
\def\dist{\operatorname{dist}}
\def\Span{\operatorname{span}\,}
\def\degree{\operatorname{degree}\,}
\def\kernel{\operatorname{kernel}\,}
\def\dim{\operatorname{dim}\,}
\def\codim{\operatorname{codim}}
\def\trace{\operatorname{trace\,}}
\def\Span{\operatorname{span}\,}
\def\dimension{\operatorname{dimension}\,}
\def\codimension{\operatorname{codimension}\,}
\def\Gl{\operatorname{Gl}}
\def\Sl{\operatorname{Sl}}
\def\nullspace{\scriptk}
\def\kernel{\operatorname{Ker}}
\def\ZZ{ {\mathbb Z} }
\def\p{\partial}
\def\rp{{ ^{-1} }}
\def\Re{\operatorname{Re} }
\def\Im{\operatorname{Im} }
\def\ov{\overline}
\def\eps{\varepsilon}
\def\lt{L^2}
\def\diver{\operatorname{div}}
\def\curl{\operatorname{curl}}
\def\etta{\eta}
\newcommand{\norm}[1]{ \|  #1 \|}
\def\expect{\mathbb E}
\def\bull{$\bullet$\ }
\def\det{\operatorname{det}}
\def\Det{\operatorname{Det}}
\def\multiR{\mathbf R}
\def\bestA{\mathbf A}
\def\bestB{\mathbf B}
\def\bestC{\mathbf C}
\def\Apq{\mathbf A_{p,q}}
\def\Apqr{\mathbf A_{p,q,r}}
\def\rank{\operatorname{rank}}
\def\rankk{\mathbf r}
\def\diameter{\operatorname{diameter}}
\def\bp{\mathbf p}
\def\bq{\mathbf q}
\def\bff{\mathbf f}
\def\bg{\mathbf g}
\def\bh{\mathbf h}
\def\bv{\mathbf v}
\def\bw{\mathbf w}
\def\bu{\mathbf u}
\def\bx{\mathbf x}
\def\bc{\mathbf c}
\def\bt{\mathbf t}
\def\by{\mathbf y}
\def\bz{\mathbf z}
\def\bzero{\mathbf 0}
\def\bL{\mathbf L}
\def\essd{\operatorname{essential\ diameter}}
\def\kbe{{\scriptk}_\be}

\def\mab{M}
\def\t2{\tfrac12}

\newcommand{\abr}[1]{ \langle  #1 \rangle}
\def\unitQ{{\mathbf Q}}
\def\mbfp{{\mathbf P}}

\def\aff{\operatorname{Aff}}
\def\T{{\mathcal T}}

\def\repair{\medskip \hrule \begin{center}{Construction Zone}\end{center}  \hrule \medskip}
\def\endrepair{\medskip \hrule\hrule\medskip}

\def\ovl{\overline{L}}
\def\bard{\bar\delta}
\def\tdelt{\tilde\delta}
\def\essinf{\operatorname{essinf}}
\def\esssup{\operatorname{esssup}}

\newcommand{\Norm}[1]{ \Big\|  #1 \Big\| }
\newcommand{\set}[1]{ \left\{ #1 \right\} }
\newcommand{\sset}[1]{ \{ #1 \} }

\def\one{{\mathbf 1}}
\newcommand{\modulo}[2]{[#1]_{#2}}

\def\Abest{{\mathbb A}}

\def\symdif{\,\Delta\,}
\def\defb{|E\symdif \bb|}
\def\bb{\mathbb B}

\def\akd{{\mathbf A}_{k,d}}
\def\ak{{\mathbf A}_{k}}

\def\scriptf{{\mathcal F}}
\def\scripts{{\mathcal S}}
\def\scriptq{{\mathcal Q}}
\def\scriptg{{\mathcal G}}
\def\scriptm{{\mathcal M}}
\def\scriptb{{\mathcal B}}
\def\scriptc{{\mathcal C}}
\def\scripth{{\mathcal H}}
\def\scriptt{\Phi}
\def\scripti{{\mathcal I}}
\def\scripte{{\mathcal E}}
\def\scriptv{{\mathcal V}}
\def\scriptw{{\mathcal W}}
\def\scriptu{{\mathcal U}}
\def\scripta{{\mathcal A}}
\def\scriptr{{\mathcal R}}
\def\scripth{{\mathcal H}}
\def\scriptd{{\mathcal D}}
\def\scriptl{{\mathcal L}}
\def\scriptn{{\mathcal N}}
\def\scriptp{{\mathcal P}}
\def\scriptk{{\mathcal K}}
\def\scriptP{{\mathcal P}}
\def\scriptj{{\mathcal J}}
\def\scriptz{{\mathcal Z}}
\def\frakv{{\mathfrak V}}
\def\frakG{{\mathfrak G}}
\def\frakA{{\mathfrak A}}
\def\frakB{{\mathfrak B}}
\def\frakC{{\mathfrak C}}
\def\frakf{{\mathfrak F}}
\def\fraki{{\mathfrak I}}
\def\fcross{{\mathfrak F^{\times}}}

\def\boldf{\mathbf f}
\def\bolda{\mathbf a}
\def\boldb{\mathbf b}
\def\boldg{\mathbf g}
\def\bF{\mathbf F}
\def\bG{\mathbf G}
\def\bE{\mathbf E}
\def\bI{\mathbf I}
\def\be{\mathbf e}
\def\br{\mathbf r}
\def\bA{\mathbf A}
\def\ba{\mathbf a}
\def\bB{\mathbf b}
\def\bs{\mathbf s}
\def\Estar{E^\star}
\def\bEstar{\mathbf E^\star}
\def\bEdagger{\mathbf E^\dagger}
\def\bAstar{\mathbf A^\star}
\def\Ee{{\mathcal E}}
\def\dorbit{\dist(\bE,\scripto(\bEstar))}
\def\scripto{{\operatorname{orbit}}}

\def\tf{\tfrac{1}{2}}
\def\rplus{{\reals^+}}
\def\bart{\bar t}
\def\fg{\frakG}

\author{Michael Christ}
\address{
        Michael Christ\\
        Department of Mathematics\\
        University of California \\
        Berkeley, CA 94720-3840, USA}
\email{mchrist@berkeley.edu}
\thanks{Research of both authors supported in part by NSF grant DMS-1363324.}

\author{Kevin O'Neill}
\address{
	Kevin O'Neill\\
        Department of Mathematics\\
        University of California \\
        Berkeley, CA 94720-3840, USA}
\email{kevinwoneill@berkeley.edu}

\date{November 30, 2017}

\title[Maximizers of Rogers-Brascamp-Lieb-Luttinger Functionals] 
{Maximizers \\ of 
Rogers-Brascamp-Lieb-Luttinger Functionals \\ in Higher Dimensions}

\begin{abstract} 
A symmetrization inequality of Rogers and of Brascamp-Lieb-Luttinger
states that for a certain class of multilinear integral expressions,
among  tuples of sets of prescribed Lebesgue measures,
tuples of balls centered at the origin are among the maximizers.
Under natural hypotheses,
we characterize all maximizing tuples for these inequalities
for dimensions strictly greater than $1$.
We establish a sharpened form of the inequality.
\end{abstract}

\maketitle

\section{Introduction}
Let $J$ be a finite index set, and  for each $j\in J$ let
$L_j:\reals^D\to\reals^d$ be a surjective linear mapping.
Let $E_j\subset\reals^d$ be measurable sets with finite Lebesgue measures,
let $\bE=(E_j: j\in J)$,
and let $f_j=\one_{E_j}$  denote the indicator function of $E_j$.
The objects of our investigation are functionals
\begin{equation*}
\Phi_\scriptl(\bE) = \int_{\reals^{D}} \prod_{j\in J} \one_{E_j}\circ L_j,
\end{equation*}
integration being with respect to Lebesgue measure.
The goal of this paper is to determine those tuples $\bE$ of sets,
among all tuples with specified Lebesgue measures,
that maximize $\Phi_\scriptl$ in the case $d>1$.

A structural hypothesis will be in force throughout this paper.
Let $\scriptl^1=\{L_j^1: j\in J\}$ be a finite family of 
surjective linear mappings $L_j^1:\reals^m\to\reals^1$.  
Let $m\ge 2$.
To any dimension $d\ge 2$ associate $D=md$ and $\scriptl=\scriptl^d =\{L_j^d: j\in J\}$
where $L_j^d: \reals^{md}\to\reals^d$
is defined as follows. Express $L_j^1(x_1,\dots,x_m)
= \sum_{i=1}^m a_{i,j}x_i$ with each $a_{i,j}\in\reals$.
For $\bx = (x_1,\dots,x_m)\in \reals^{md}$, these same coefficients
are used to define $L_j^d$, by
\begin{equation} \label{eq:structure}
L_j^d(\bx) = \sum_{i=1}^m a_{i,j}x_i.  \end{equation}
Each $L_j^d$ is then surjective. 
The general linear group $\Gl(d)$ acts diagonally on $\reals^{md} = (\reals^d)^m$ by
$\bx = (x_1,\dots,x_m)\mapsto A(\bx) = (Ax_1,\dots,Ax_m)$
for $A\in\Gl(d)$ and $\bx\in\reals^{md}$.
The structural hypothesis \eqref{eq:structure} is equivalent to
\begin{equation} \label{association}
L_j^d\circ A = A\circ L_j^d \ \text{ for every $j\in J$.} \end{equation}

For $d=1$, tuples $\bE$ that maximize $\Phi_\scriptl$,
among all tuples of indicator functions of sets of specified Lebesgue measures, 
have been characterized in \cite{christBLL}, under certain hypotheses.
In the present paper, we extend this result to higher dimensions, under the
structural hypothesis \eqref{eq:structure}, along with other natural hypotheses.

For any Lebesgue measurable set $E\subset\reals^d$ satisfying
$0<|E|<\infty$, define $E^\star\subset\reals^d$ to be the closed ball
centered at $0$ satisfying $|E^\star|=|E|$. 
Write $\bE = (E_j: j\in J)$
and $\bE^\star = (E_j^\star: j\in J)$.

The symmetrization inequality of
Rogers \cite{rogers2}\footnote{Inequality \eqref{eq:BLL} is widely attributed to the 1974 paper \cite{BLL}
of Brascamp, Lieb, and Luttinger. It was also published by Rogers \cite{rogers2} in 1957,
but the earlier publication has been largely overlooked. Moreover,  
\cite{rogers2} does not contain a rigorous 
identification of the limit of a sequence of Steiner symmetrizations.
On this point, \cite{rogers2} refers to \cite{rogers1}, which in turn refers to the book
Blaschke \cite{blaschke}. We were unable to find the justification in \cite{blaschke},
which is concerned with convex bodies rather than with general measurable sets.} 
and of Brascamp, Lieb, and Luttinger \cite{BLL} states that
\begin{equation}\label{eq:BLL} \scriptt_\scriptl(\bE) \le \scriptt_\scriptl(\bE^\star),  \end{equation}
provided that the mappings $L_j$ satisfy the symmetry hypothesis \eqref{eq:structure}.
Thus among tuples of sets with prescribed measures, the configuration in which each set
is a ball centered at the origin is a maximizer of $\scriptt_\scriptl$. 
Early contributions to this topic include works of
Riesz \cite{riesz}, Sobolev \cite{sobolev}, and Hardy, Littlewood, and P\'olya \cite{hardyetal}.
The literature concerning the Brunn-Minkowski inequality is also relevant.
The present paper is concerned with the inverse question of characterizing all maximizing tuples.

Other maximizers are generated from $\bEstar$ by symmetries.
Firstly, there is a natural action of the translation group $\reals^{md}$:
For $\by\in\reals^{md}$ define
$\tau_\by(\bE) = (E_j + L_j(\by): 1\le j\le m)$.
Then $\scriptt_\scriptl(\tau_\by(\bE)) =  \scriptt_\scriptl(\bE)$; 
in particular, $\bE$ is a maximizer if and only if $\tau_\by(\bE)$ is a maximizer.
Secondly, there is a natural action of
$\Sl(d)$, the subgroup of all elements of $Gl(d)$ with determinant $1$.
For any $A\in \Sl(d)$,  define
$A(\bE) = (A(E_j): j\in J)$.
Then $\scriptt_\scriptl(A(\bE)) = \scriptt_\scriptl(\bE)$
for all $m$--tuples $\bE$.
Thus among the maximizing configurations $\bE$ are all tuples of homothetic, compatibly
centered ellipsoids
\begin{equation} (A(B_j)+L_j(\by): j\in J), \end{equation}
where $B_j\subset\reals^d$ is the closed ball centered at the origin
of Lebesgue measure $|B_j|=|E_j|$, $\by\in\reals^{md}$, and $A\in \Sl(d)$.

Our main result states that under certain natural hypotheses on $\scriptl$
and on $\be=(|E_j|: j\in J)$, these are the only maximizers.

Accurate formulation of our results requires several definitions. 
In order to come to the point with reasonable promptitude, we state
our main result here in preliminary form, deferring those definitions to \S\ref{section:defns}.
The theorem is upgraded below to a more quantitative form, as Theorem~\ref{thm:stability}. 

\begin{theorem}[Uniqueness of maximizers, up to symmetry] \label{thm:main}
Let $d,m\ge 2$.  Let $J$ be a finite index set.  Let $\scriptl$
be a nondegenerate collection of 
linear mappings $L_j:\reals^{md}\to\reals^d$ satisfying the structural hypothesis \eqref{eq:structure}.
Let $\be\in(0,\infty)^J$. Suppose that $(\scriptl,\be)$ is strictly admissible.
Let $\bE$ be a $J$-tuple of Lebesgue measurable subsets of $\reals^d$
satisfying $|E_j|=e_j$ for each $j\in J$. Then
$\scriptt_{\scriptl}(\bE)=\scriptt_{\scriptl}(\bE^\star)$
if and only if there exist $\bv\in\reals^m$ and $\psi\in\Sl(d)$ satisfying
\begin{equation} E_j = \psi(E_j^\star)+L_j(\bv) 
\ \text{ for every $j \in J$.} \end{equation}
\end{theorem}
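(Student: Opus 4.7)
The plan is to reduce the problem to the one-dimensional uniqueness theorem of \cite{christBLL} via slicewise Steiner symmetrization, and then assemble the resulting slicewise rigidity into a single element of $\Sl(d)$ using the $\Sl(d)$-equivariance guaranteed by the structural hypothesis \eqref{eq:structure}.

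For each unit vector $\omega\in S^{d-1}$, decompose $\reals^d=\reals\omega\oplus\omega^\perp$ and, for $\by\in(\omega^\perp)^m$, set
\[
G_j^\omega(\by)\;=\;\{\,s\in\reals:\, s\omega+L_j^{d-1}(\by)\in E_j\,\}.
\]
The structural hypothesis \eqref{eq:structure} makes $L_j^d$ act coordinatewise, with identical coefficients on $\reals\omega$ and $\omega^\perp$, so Fubini yields
\[
\scriptt_{\scriptl^d}(\bE)\;=\;\int_{(\omega^\perp)^m}\scriptt_{\scriptl^1}\bigl(\bG^\omega(\by)\bigr)\,d\by,
\]
and a slicewise application of the one-dimensional Rogers--Brascamp-Lieb-Luttinger inequality recovers \eqref{eq:BLL}. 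Since $\bE$ is a maximizer, equality must hold in the slicewise inequality for a.e.\ $\by$. Granting that strict admissibility of $(\scriptl^d,\be)$ descends to strict admissibility of the slice problem for a.e.\ $\by$ (a technical point that must be verified), the one-dimensional uniqueness result of \cite{christBLL} supplies $\bv^\omega(\by)\in\reals^m$ with
\[
G_j^\omega(\by)\;=\;G_j^{\omega,\star}(\by)+L_j^1\bigl(\bv^\omega(\by)\bigr)\qquad(j\in J).
\]

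Geometrically this says that every chord of $E_j$ parallel to $\omega$ is a symmetric interval whose $\omega$-center equals $L_j^1(\bv^\omega(\by))$, with the chord labeled by $L_j^{d-1}(\by)\in\omega^\perp$. Consistency across $\by$'s in the same $L_j^{d-1}$-fiber, together with nondegeneracy of $\scriptl$, defines a center function $c_j^\omega\colon\omega^\perp\to\reals$ by $c_j^\omega(L_j^{d-1}(\by))=L_j^1(\bv^\omega(\by))$. A measurability/Cauchy functional equation argument, together with the freedom to translate $\bE$ by some $\tau_{\bv_0}$, will then force each $c_j^\omega$ to be a linear functional.

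It remains to collate the linear functionals $c_j^\omega$, as $j$ and $\omega$ vary, into a single $\psi\in\Sl(d)$. Because the slicewise shifts are governed by a \emph{common} vector $\bv^\omega(\by)\in\reals^m$ rather than by $j$-dependent data, the slopes of $c_j^\omega$ for different $j\in J$ must be linked through the $L_j$-structure; nondegeneracy then lets me invert the resulting linear system to produce a single endomorphism $\psi^{-1}\in\Gl(d)$ that simultaneously implements every slicewise shear. The transformed set $\psi^{-1}E_j$ is then symmetric about every hyperplane through the origin, hence equal to $E_j^\star$ up to a null set, and volume preservation places $\psi$ in $\Sl(d)$. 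I expect this final collation step---converting a one-parameter family of slicewise shear data into one global $\psi\in\Sl(d)$ that is valid simultaneously for every $j\in J$---to be the main technical obstacle, and the nondegeneracy and strict admissibility hypotheses to be exactly what is needed to close it.
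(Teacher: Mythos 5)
Your proposal is a genuine slicing strategy, but it diverges from the paper in a fundamental way: the authors state explicitly that Theorem~\ref{thm:main} is obtained only as a corollary of the quantitative stability estimate \eqref{eq:mainconclusion}, and that ``we are not in possession of a direct proof of Theorem~\ref{thm:main} that does not go through \eqref{eq:mainconclusion}.'' The paper's actual route is entirely different from yours: a continuous rearrangement flow reduces to configurations $O(\delta)$-close to $\scripto(\bEstar)$; a perturbation expansion of $\scriptt_\scriptl$ to second order in the boundary perturbation produces quadratic forms $Q_{i,j}$ on $L^2(S^{d-1})$; and the key analytic step (Proposition~\ref{prop:end}) is a spectral estimate for these compact operators, proved indirectly by slicing to $d=1$ only for a carefully engineered one-parameter family of near-ball perturbations, not for an arbitrary maximizer.

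Your direct slicing argument has concrete gaps that you underweight. First, the point you flag as merely ``technical'' is in fact fatal as stated: for a general maximizer $\bE$ (not yet known to be close to a tuple of balls) the slice measures $\be(\by)=(|G_j^\omega(\by)|)_{j\in J}$ degenerate as $\by$ approaches the boundary of the relevant compact convex body, and for such $\by$ the pair $(\scriptl^1,\be(\by))$ is at best weakly admissible. The one-dimensional uniqueness theorem of \cite{christBLL} does not apply there, and in the weakly admissible regime there \emph{are} non-translate one-dimensional maximizers, so you cannot conclude that $G_j^\omega(\by)$ is a shifted centered interval for a.e.\ $\by$. Second, the $d=1$ result you invoke carries a supplementary genericity hypothesis (on the extreme points of $\scriptk^1_{\be(\by)}$) which your proposal never mentions; it fails on a nontrivial set of $\by$ and must be circumvented, which the paper does via an approximation and using the flexibility of having a \emph{family} of perturbed tuples rather than a single fixed $\bE$. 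Third, the collation step you honestly flag as the ``main technical obstacle'' is precisely the step for which no construction is given: you need the center functions $c_j^\omega$ to be linear, compatible across $j$ and across $\omega$, and realized by a single $\psi\in\Sl(d)$, and none of these are derived from the slicewise data. A measurability/Cauchy-equation argument is plausible for linearity in $\by$ at fixed $\omega$, but the cross-$\omega$ compatibility and the passage to a global $\psi$ are left entirely to intuition. Absent these three steps, the proposal does not constitute a proof.
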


For the most fundamental example, the Riesz-Sobolev inequality, and more generally
when $|J|=m+1$, this was proved by Burchard \cite{burchard}. 
The case $(m,d)=(2,1)$, for $|J|$ arbitrarily large, was treated in \cite{christflock},
under a weaker admissibility and slightly stronger nondegeneracy hypotheses.
The nearly general\footnote{\cite{christBLL} assumes an auxiliary genericity condition.} 
case with $d=1$ was treated in \cite{christBLL}.
Related results, in which indicator functions of sets are replaced by more general functions,
at least one of which satisfies an auxiliary hypothesis of strict monotonicity of $f_j^\star$,
are discussed in \cite{liebloss} and references cited there.

It remains an open problem to determine all maximizers in the weakly admissible
(that is, admissible but not strictly admissible)
case for general $(m,d)$.
Theorem~\ref{thm:main} does not extend verbatim to the general weakly admissible case when $d\ge 2$. 
Indeed, for the Riesz-Sobolev inequality in dimensions $d\ge 2$,
Burchard \cite{burchard} characterizes maximizing tuples in the weakly admissible
case as arbitrary ordered triples of homothetic, compatibly translated convex sets. 

While Theorem~\ref{thm:main} may appear to be rather closely related to
stability for the Brunn-Minkowski inequality, analyzed in quantitative form
by Figalli and Jerison \cite{FJ1},\cite{FJ2},\cite{FJ3} after qualitative results by the first author
\cite{christbmtwo},\cite{christbmhigh}, the analysis here does not rely upon
machinery from discrete additive combinatorics.
As in the seminal work of Bianchi and Egnell \cite{bianchiegnell} on
quantiative stability for the Sobolev inequality,
we first reduce matters to configurations close to maximizers,
then exploit a perturbation expansion and spectral analysis
of certain compact linear operators. The reduction is based on monotonicity
of the functional under a continuous flow, as for instance in work of
Carlen and Figalli \cite{carlenfigalli} and Carlen \cite{carlen2017},
rather than on a compactness theorem as in \cite{bianchiegnell}.
The analysis of the eigenvalues, following \cite{christRSult}
and exploiting \cite{christBLL}, is indirect. 

A recent related work in a somewhat different spirit is Carlen and Maggi \cite{carlenmaggi},
in which a quantitative form of a theorem of Lieb \cite{lieb1977}
is obtained for the Riesz-Sobolev inequality. In \cite{carlenmaggi} and \cite{lieb1977}
there is a supplementary hypothesis on the distribution function
of one of the three factors, which introduces a smidgen of locality in comparison
to the general situation.

\section{Definitions, hypotheses, and second main theorem} \label{section:defns}

Maximizers can only be characterized usefully under appropriate hypotheses.
These are of two principal types: nondegeneracy hypotheses on $\scriptl$,
and admissibility hypotheses on the measures of the sets $E_j$. The formulation
of admissibility also involves $\scriptl$.

Let $m,d\ge 2$ be elements of $\naturals$.
Throughout this paper, $J$ will denote a finite index set.
Its cardinality is not determined by the parameters $m,d$. 

All scalar-valued functions in this paper are assumed to be real-valued.
For $d,m\in\naturals$ we will work with
$(\reals^d)^m$, and will identify this product with $\reals^{md}$.
Throughout this paper, two Lebesgue measurable subsets of Euclidean space 
are considered to be equal if their symmetric difference is a Lebesgue null set.
$B_k$ denotes the closed ball centered at the origin of Lebesgue measure $|B_k|=|E_k|$.
$\omega_d$ denotes the Lebesgue measure of the unit ball $\{x\in\reals^d: |x|\le 1\}$.

The following notational convention will be employed. 
We will work systematically with objects associated to $\reals^1$,
which in a natural way give rise to closely related objects associated to $\reals^d$ for each $d>1$.
We use the superscripts $1,d$ to indicate this association.
Specializing $d$ to $1$ will recover the object associated to $\reals^1$.
The family $\scriptl^1$ is one such object. 
Associated to it is
$\scriptl^d=(L_j^d: j\in J)$, a family of linear mappings $L_j^d: \reals^{md}\to\reals^d$,
defined in terms of $\scriptl^1$  by \eqref{eq:structure}.
We will often simplify notation by suppressing the superscripts,
writing simply $\scriptl,L_j$ for $\scriptl^d,L_j^d$, respectively.

\begin{definition}[Nondegeneracy] \label{defn:nondegenerate}
Let $J$ be a finite index set, and let $m\ge 2$. 
A family $\scriptl=\{L_j: j\in J\}$ of linear mappings
$L_j:\reals^m\to \reals$ is nondegenerate if
\newline
(i) Each $L_j$ is surjective, 
\newline (ii)
For any $i\ne j\in J$, $L_i$ is not a scalar multiple of $L_j$,
\newline (iii) For each $j\in J$, $\bigcap_{i\ne j\in J} \kernel(L_i)=\{0\}$.

For $d\ge 2$, the associated family $\scriptl^d$ is nondegenerate if $\scriptl^1=\scriptl$ is nondegenerate.
\end{definition}

Condition (iii) forces $|J|\ge m+1$. 

To any $\be=(e_j: j\in J)\in (0,\infty)^J$ we associate
$\br=(r_j: j\in J)$,  defined by $\omega_d r_j^d=e_j$.
If $|E_j|=e_j$ then $E_j^\star$ is the closed ball centered at the 
origin, of radius $r_j$.

\begin{notation}[Associated convex bodies]
Let $\be\in(0,\infty)^J$ and $\scriptl=(L_j: j\in J)$.
$\scriptk^1_\be\subset\reals^m$ is the closed convex set 
\begin{equation}
\scriptk^1_\be = \set{x\in\reals^m: |L_j^1(x)|\le\tfrac12 e_j\ \text{ for each $j\in J$} }.
\end{equation} 

$\scriptk^d_\be\subset\reals^{md}$ is the closed convex set 
\begin{equation}
\scriptk^d_\be = \set{\bx\in\reals^{md}: |L_j^d(\bx)|\le r_j\ \text{ for each $j\in J$} }.
\end{equation}
\end{notation}

We will sometimes suppress the superscript, writing simply $\scriptk_\be$.
$\scriptk_\be$ contains an open neighborhood of $0$.
Condition (iii) of the nondegeneracy hypothesis ensures that $\scriptk_\be$ is compact. 

\begin{notation}[Kernels $K_j$] \label{defn:K_j}
For $j\in J$, $K_j = K_{j,\be,\scriptl}^d$ is the radially symmetric function from
$\reals^d$ to $[0,\infty]$ defined  by
\begin{equation} \label{eq:Kjdefn}
\int_{A} K_j = \scriptt_\scriptl(\bE) \end{equation}
for every Lebesgue measurable set $A\subset\reals^d$,
where $E_j=A$ and for every $i\in J\setminus\{j\}$, 
$E_i$ is the closed ball centered at $0\in\reals^d$ of Lebesgue measure $e_i$.
\end{notation}
The parameter $e_j$ does not enter into the definition of $K_j$.

Under the nondegeneracy hypothesis, $K_j$ is finite-valued and continuous. 
$K_j$ is radially symmetric, 
and $[0,\infty)\owns r \mapsto K_j(rx)$ is nonincreasing for each $x\in\reals^d$.
As $r$ varies with $x$ held fixed, the quantities $K_j(rx)$ represent the $(m-1)d$--dimensional
volumes of parallel slices of an $md$--dimensional convex body, so that
according to the Brunn-Minkowski inequality, $r\mapsto \log K_j(rx)$ is concave in 
the interval in which $K_j$ is strictly positive.

Therefore the one-sided derivatives 
$D^\pm K_j(x) = \lim_{h\to 0^\pm} h^{-1} (K_j(x+h)-K_j(x))$
exist and are finite and nonpositive whenever $x>0$ and $K_j(x)>0$.
If $D^+K_j(tx)<0$ then for any $t'>t$ for which $K_j(t'x)>0$,
$D^\pm K_j(t'x)$ are also negative, and $K(tx)>K(t'x)$.

We write $\be^{1/d}$ as shorthand for $(e_j^{1/d}: j\in J)$.
The following definitions of admissibility and strict admissibility for $d=1$
are taken from \cite{christBLL}.

\begin{definition}[Admissibility for $d=1$] \label{defn:admissible}
$(\scriptl^1,\be)$ is admissible if for each $k\in J$
there exists $\bx\in \scriptk_\be^1$ satisfying $|L_k^1(\bx)| = e_k/2$.
\end{definition}

\begin{definition}[Strict admissibility for $d=1$] \label{defn:strictlyadmissible}
Let $J$ be a finite index set.
Let $\scriptl^1=(L_j^1: j\in J)$ be a nondegenerate $J$-tuple of 
linear mappings $L_j^1:\reals^m\to\reals^1$.
Let $\be = (e_j: j\in J)\in(0,\infty)^J$.
Then $(\scriptl^1,\be)$ is strictly admissible  if 
the following two conditions hold for each $j\in J$.

(i) 
There exists $\bx\in\scriptk_\be$ satisfying $|L_j^1(\bx)| = \tfrac12 \be_j$
and $|L_i^1(\bx)|< \tfrac12\be_i$ for all $j\ne i\in J$.

(ii)
$D^- K_j^1(e_j/2)<0$. 
\end{definition}

When $|J|=3$ and $m=2$, if $\scriptl$ is nondegenerate
then it is possible to make linear changes of variables
after which $\scriptl^1$ consists of the three mappings $(x_1,x_2)\mapsto x_1,x_2,x_1+x_2$;
see the discussion of symmetries following Notation~\ref{Mijdefn}.
Strict admissibility is then equivalent to strict admissibility as defined
by Burchard \cite{burchard} in the context of the Riesz-Sobolev inequality.
The relevance of an admissibility hypothesis for a conclusion
of the type of Theorem~\ref{thm:main} 
to the Riesz-Sobolev inequality is explained in \cite{burchard}, and the same
explanation applies in the general context.

\begin{definition}[Strict admissibility for $d>1$] \label{defn:strictlyadmissiblehigher}
Let $\scriptl^1=(L_j^1: j\in J)$ be a nondegenerate $J$-tuple of 
linear mappings $L_j^1:\reals^m\to\reals^1$.
Let $\scriptl^d$ be associated to $\scriptl^1$ by \eqref{eq:structure}.
Let $\be = (e_j: j\in J)\in(0,\infty)^J$.
Then $(\scriptl^d,\be)$ is admissible if $(\scriptl^1,\be^{1/d})$ is admissible. 
$(\scriptl^d,\be)$ is strictly admissible  if 
$(\scriptl^1,\be^{1/d})$ is strictly admissible.  
\end{definition}

$(\scriptl^d,\be)$ is said to be weakly admissible if it is admissible, but not strictly admissible.

When $\scriptl$ is nondegenerate and $(\scriptl,\be)$ is strictly admissible,
$\scriptk_\be^1$ is a compact convex polytope in $\reals^m$ that contains
a neighborhood of the origin. Therefore if it is nonempty
then it has finitely many extreme points, and is equal to the convex hull of those points.
For each extreme point $\bx$, there must exist
at least $m$ indices $k\in J$ for which $|L_k^1(\bx)| = e_k/2$.
Moreover, $\{L_j^1\in\scriptl^1: |L_j^1(\bx)|=e_j/2\}$ 
must span the dual space of $\reals^m$.

Let $\scripto(\bEstar)$ denote the orbit of $\bEstar$
under the group generated by the translation symmetry group $\reals^m$ 
and by $\Sl(d)$, acting diagonally.

\begin{definition}
The distance from $\bE$ to the orbit of $\bE^\star$ is
\begin{equation} \label{eq:distancedefn}
\dist(\bE,\scripto(\bE^\star)) = \inf_{\bv,\psi} 
\max_{j\in J} |E_j\symdif (\psi(E_j^\star) + L_j(\bv))|
\end{equation}
where the infimum is taken over all $\bv\in\reals^{md}$
and all $\psi\in\Sl(d)$.
\end{definition}

It is elementary that for each tuple $\bE$ of sets with finite, positive measures, 
the infimum in this definition is attained by some $\bv,\psi$.

We also establish a quantitative form of Theorem~\ref{thm:main}.

\begin{theorem}[Quantitative stability] \label{thm:stability}
Let $d,m,J,\scriptl$ be as in Theorem~\ref{thm:main}.
let $S$ be a compact subset of the Cartesian product of $(0,\infty)^J$
with the set of all $J$--tuples of surjective linear mappings
such that $\scriptl$ is nondegenerate and $(\scriptl,\be)$ is strictly admissible
for every $(\scriptl,\be)\in S$. 
Then there exists $c>0$ such that for every $(\scriptl,\be)\in S$, and for every
$J$-tuple $\bE$ of Lebesgue measurable subsets of $\reals^d$
satisfying $|E_j|=e_j$ for each $j\in J$,
\begin{equation} \label{eq:mainconclusion}
\scriptt_{\scriptl}(\bE)\le \scriptt_{\scriptl}(\bE^\star) - c\dist(\bE,\scripto(\bE^\star))^2.
\end{equation} 
\end{theorem}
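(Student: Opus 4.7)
My plan is to deduce Theorem~\ref{thm:stability} by combining a qualitative reduction to configurations close to the orbit $\scripto(\bEstar)$ with a local quadratic lower bound valid in a neighborhood of that orbit, arranging both ingredients to be uniform over the compact parameter set $S$.

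For the qualitative step, fix $(\scriptl,\be)\in S$ and any admissible tuple $\bE$. I would deploy a continuous symmetrization flow $t\mapsto\bE(t)$ (a continuous analogue of Steiner symmetrization of the sort used by Carlen--Figalli~\cite{carlenfigalli} and Carlen~\cite{carlen2017}) that starts at $\bE$, along which $\scriptt_\scriptl$ is nondecreasing, and which converges to an element of $\scripto(\bEstar)$. Combined with Theorem~\ref{thm:main}, this yields the implication: for every $\varepsilon>0$ there exists $\delta>0$ such that $\scriptt_\scriptl(\bE)\ge\scriptt_\scriptl(\bEstar)-\delta$ forces $\dist(\bE,\scripto(\bEstar))<\varepsilon$. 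Continuous dependence of $\scriptt_\scriptl(\bEstar)$ and of the kernels $K_j$ on $(\scriptl,\be)$, together with compactness of $S$, promotes $\delta$ to a constant depending only on $S$.

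For the local estimate, I would show there exist $\varepsilon_0,c_0>0$ (depending on $S$) such that $\dist(\bE,\scripto(\bEstar))\le\varepsilon_0$ implies the desired quadratic bound. Choosing $\psi\in\Sl(d)$ and $\bv\in\reals^{md}$ attaining the infimum in \eqref{eq:distancedefn} and replacing $\bE$ by its image under these symmetries, I may assume $\bE$ is close to $\bEstar$, and parameterize each $E_j$ by a small radial deformation $u_j:\p E_j^\star\to\reals$ of the ball. A second-order expansion of $\scriptt_\scriptl(\bEstar)-\scriptt_\scriptl(\bE)$ produces a symmetric quadratic form $Q(\bu,\bu)$ whose coefficients are integrals against the radial kernels $K_j$ and their one-sided derivatives $D^-K_j$, the latter being strictly negative at the supporting radius by strict admissibility.

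The main obstacle is the spectral analysis of $Q$. I would decompose each $u_j$ into spherical harmonics on $S^{d-1}$; radial symmetry of $K_j$ diagonalizes $Q$ with respect to the spherical harmonic degree $\ell$. Degree $\ell=0$ (the radial sector) captures the one-dimensional Rogers-Brascamp-Lieb-Luttinger problem, and the $d=1$ results of~\cite{christBLL}, together with $D^-K_j^1(e_j^{1/d}/2)<0$ supplied by strict admissibility, yield a positive quadratic gap. Degrees $\ell=1$ and $\ell=2$ precisely match the infinitesimal translation and $\Sl(d)$ actions, respectively (degree-$2$ harmonics on a sphere parameterize traceless symmetric deformations), so these directions lie in the tangent space to $\scripto(\bEstar)$ and are annihilated by the gauge-fixing choice of $(\bv,\psi)$. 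For degrees $\ell\ge 3$, positivity follows from the strict negativity of $D^-K_j$ at the supporting radius via the indirect spectral approach of~\cite{christRSult}. Uniformity of the gap over $S$ is obtained from continuous dependence of each diagonal block on $(\scriptl,\be)$ together with the compactness of $S$.
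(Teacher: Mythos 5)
Your outline captures the broad skeleton (flow $\to$ small-distance reduction, then a second-order expansion and spherical harmonic analysis), but there are several genuine gaps where it diverges from what actually makes the argument work.

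First, your qualitative step is circular as stated. You propose to combine the flow with Theorem~\ref{thm:main} to get the soft implication ``near-maximizer $\Rightarrow$ close to $\scripto(\bEstar)$.'' But the paper is explicit that it has no proof of Theorem~\ref{thm:main} independent of Theorem~\ref{thm:stability}; the characterization of maximizers is derived as a corollary of the quantitative stability inequality. The flow is used differently: once the local estimate \eqref{eq:mainconclusion} is known under the extra assumption $\dist(\bE,\scripto(\bEstar))\le\delta_0\max_j|E_j|$, one runs the flow from a general $\bE$ until the distance first drops to $\delta_0$, and the monotonicity of $t\mapsto\scriptt_\scriptl(\bE(t))$ does the rest; no characterization of maximizers is needed.

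Second, the degree-$0$ sector is not where the $d=1$ result enters. Because $|E_j|=e_j$ is fixed, $\int f_j=0$ and hence $\pi_0(F_j)=0$ identically, so the zeroth spherical harmonic simply does not appear. The one-dimensional stability theorem of~\cite{christBLL} is instead invoked for \emph{all} degrees at once, via a slicing argument at the very end of the proof: $\scriptt_\scriptl(\bE(s))$ is rewritten as an integral over $\by'\in\Lambda_{d-1}$ of one-dimensional quantities $\scriptt_{\scriptl^1}(\bI(\by',s))$, the $d=1$ stability inequality is applied to the interval tuples $\bI(\by',s)$, and the resulting gain is integrated in $\by'$. This slicing, combined with the identity $c_j(x_j',s)=sP_j(x')+O(s^2)$ and the algebraic Lemma~\ref{lemma:PsharpOG} showing that $P_\sharp(\bG)\not\equiv0$ for nonzero balanced $\bG$ after a rotation, is the engine of Proposition~\ref{prop:end}. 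Your sketch omits this mechanism entirely and replaces it with the assertion that strict negativity of $D^-K_j$ handles degrees $\ge3$; the paper stresses that the eigenvalues of the $T_{i,j}$ are not computable and lack monotonicity in the degree, which is exactly why a direct spectral positivity argument fails and the slicing route is used.

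Third, your claim that the gauge-fixing annihilates all degree-$1$ and degree-$2$ components is too optimistic, and the gauge you propose (minimizing the distance in \eqref{eq:distancedefn}) is not the one that does the cancellation. The translation group $\reals^{md}$ has only $md$ parameters and so can kill $\pi_1(F_j)$ for at most $m$ indices $j\in J'$; the $\Sl(d)$ action, acting diagonally, can kill $\pi_2(F_n)$ for only a single index $n$. This is precisely the content of the balancing Lemma~\ref{lemma:balancing}, and the definition of ``balanced'' encodes exactly these partial cancellations. The remaining degree-$1$ and degree-$2$ parts (for $j\notin J'$ and $j\ne n$ respectively) are not annihilated and must be shown to contribute a positive gap; this again rests on Lemma~\ref{lemma:PsharpOG}, where the balancing hypothesis is what guarantees $P_\sharp(\bG)\not\equiv0$ in degrees $1$ and $2$.

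In short: the reduction-via-flow and second-order-expansion steps are aligned with the paper, but the heart of the argument --- the reduction to balanced harmonics, the comparison of the second-order expansion with the slicewise one-dimensional stability inequality, and the algebraic nonvanishing of $P_\sharp$ --- is missing, and one of your ingredients (Theorem~\ref{thm:main}) cannot be used without circularity.
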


The exponent $2$ in \eqref{eq:mainconclusion} is optimal.

Stability theorems have often been established after maximizers have been characterized. 
As was done in \cite{christBLL}, we will instead prove the quantitative stability theorem directly,
obtaining the characterization of maximizers as a corollary. 
We are not in possession of a direct proof of 
Theorem~\ref{thm:main} that does not go through \eqref{eq:mainconclusion}.

The proof of Theorem~\ref{thm:stability} combines analyses developed 
in \cite{christRSult} (for stability of the Riesz-Sobolev inequality in
dimensions greater than $1$) and in \cite{christBLL} (for the $d=1$ 
analogues of Theorems~\ref{thm:main} and \ref{thm:stability}), respectively.
The present paper also invokes the case in which $d=1$ and each set $E_j$
is an interval, whose treatment is one of the main steps in \cite{christBLL}.
Several auxiliary results from those works are also invoked here.

The analogues of Theorems~\ref{thm:main} and \ref{thm:stability} established for $d=1$ in \cite{christBLL}
included a supplementary hypothesis of genericity.
This hypothesis is reviewed in the proof of Lemma~\ref{lemma:Es} below.
It is interesting that the method of analysis requires no genericity hypothesis in higher dimensions.
The Gowers multilinear forms, in the situation in which
all sets have equal Lebesgue measures, provide natural examples
in which genericity fails to hold.
\cite{christgowersnorms} treats these forms, by a different and more specialized argument.

Various differences between the cases $d=1$ and $d>1$ arise in the proofs.

\section{Two families of kernels}

Throughout the remainder of the paper, $\scriptl$ is assumed to be
nondegenerate, and $(\scriptl,\be)$ to be strictly admissible, even where
these hypotheses are not stated explicitly.

Kernels $K_j$ have been introduced above. These will appear in first-order terms in a perturbation
expansion of $\scriptt(\bE)$ about $\bEstar$. In this section we introduce corresponding kernels $M_{i,j}$ that
arise in second-order terms of the same expansion, and derive needed properties of both
$K_j$ and $M_{i,j}$.

For the remainder of the paper it is assumed that $d\ge 2$ and $m\ge 2$.
We employ the following notational convention.
Superscripts $d$ will be exhibited on quantities such as $\scriptl^d,L_j^d,K_j^d$
only when deemed necessary to avoid confusion. If no superscript is written,
then $\scriptl,L_j,K_j$ are to be understood to indicate quantities
$\scriptl^d,L_j^d,K_j^d$
associated to
$\scriptl^1,L_j^1,K_j^1$
in the manner prescribed above.
Statements concerning these are to hold for arbitrary $d\ge 1$,
or for arbitrary $d\ge 2$, as indicated.
This convention applies equally to the kernels $M_{i,j}^d$ introduced next.
In the same spirit, $\Phi=\Phi_\scriptl$. $B_j= E_j^\star$ is the closed
ball centered at $0\in\reals^d$ of radius $r_j$.

\begin{notation}[Kernels $M_{i,j}$] \label{Mijdefn}
For each pair $i\neq j\in J$, $M_{i,j}=M_{i,j}^d:\reals^d\times\reals^d\rightarrow[0,\infty)$ 
is defined through the relation
\begin{equation}\label{eq:Lij}
\iint M_{i,j}(x,y)f_i(x)f_j(y)dxdy=\scriptt_{\scriptl}(\bg),
\end{equation}
where $\bg=(g_n:n\in J)$ defined by $g_i=f_i, g_j=f_j$, and $g_k=\one_{B_k}$ for every $k\notin\{i,j\}$. 
\end{notation}

Two symmetries, besides the translation action of $\reals^{md}$ and the diagonal
action \eqref{association} of $\Gl(d)$, will be useful in the analysis.
The first of these symmetries is an action of
the product group $(\rplus)^J$
on tuples $\scriptl =(L_j: j\in J)$. 
Write $\br=(r_j: j\in J)\in (\rplus)^J)$.
The action is $\br(\scriptl) = (r_j L_j: j\in J)$,
where $r_j L_j(x)$ is the product of the scalar $r_j$ with $L_j(x)$.
Tuples of sets $\bE$ and of Lebesgue measures $\be$ transform in corresponding ways.
Thus
\begin{equation} \label{dilationaction} \scriptt_{\br(\scriptl)}(\bE) 
= \scriptt_{\scriptl}(\br(\bE))
\end{equation}
where $\br(\bE)=(r_j^{-1} E_j: j\in J)$ for $\bE=(E_j: j\in J)$.

A second symmetry is $(L_j: j\in J)\mapsto (L_j\circ \widehat{A}: j\in J)$
for arbitrary $A\in\Gl(m)$. $\widehat{A}$ denotes here the
diagonal action of $A$ on $(\reals^d)^m = \reals^{md}=(\reals^m)^d$,
by $(x_k: 1\le k\le d)\mapsto (Ax_k: 1\le k\le d)$, with each $x_k\in\reals^m$.

\begin{lemma} \label{lemma:smallmeasure}
Let $i\ne j\in J$. For $\delta>0$ define $\Omega_{i,j}(\delta)$ to be
the set of all $(\theta_i,\theta_j)\in S^{d-1}\times S^{d-1}$
for which there exists $k\in J\setminus\{i,j\}$ such that
$\{L_i,L_j,L_k\}$ is linearly dependent and 
$|L_k(r_i\theta_i,r_j\theta_j)|\le C\delta$.
There exists $C'<\infty$ such that for all
sufficiently small $\delta>0$, 
\begin{equation}(\sigma\times\sigma)(\Omega_{i,j}(\delta))\le C'\delta.\end{equation}
\end{lemma}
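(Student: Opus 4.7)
The plan is to reduce $\Omega_{i,j}(\delta)$ to a finite union indexed by the relevant third indices, and then to bound each piece by exploiting the fact that on $S^{d-1}\times S^{d-1}$ with $d\ge 2$, spherical caps of radius $\rho$ have $(d-1)$-dimensional measure of order $\rho^{d-1}$, which is at most $\rho$.

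First I would unpack the notation. For each $k\in J\setminus\{i,j\}$ with $\{L_i,L_j,L_k\}$ linearly dependent, the nondegeneracy hypothesis (Definition~\ref{defn:nondegenerate}(ii)) rules out $L_k$ being a scalar multiple of either $L_i$ or $L_j$, so the dependence takes the form $L_k^1=c_{i,k}L_i^1+c_{j,k}L_j^1$ with both scalars nonzero. The structural hypothesis \eqref{eq:structure} immediately propagates the same relation to the vector-valued maps, giving $L_k^d=c_{i,k}L_i^d+c_{j,k}L_j^d$. Consistently with this, the notation $L_k(r_i\theta_i,r_j\theta_j)$ in the statement is to be read as $c_{i,k}r_i\theta_i+c_{j,k}r_j\theta_j$, namely the value of $L_k^d$ at any $\bx$ for which $L_i^d(\bx)=r_i\theta_i$ and $L_j^d(\bx)=r_j\theta_j$.

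Since $J$ is finite, $\Omega_{i,j}(\delta)$ is contained in the union of finitely many sets
\[
\Omega_{i,j,k}(\delta) = \set{(\theta_i,\theta_j)\in S^{d-1}\times S^{d-1} : |c_{i,k}r_i\theta_i+c_{j,k}r_j\theta_j|\le C\delta},
\]
one per dependent $k$, so it suffices to bound each of these separately. Fixing such a $k$ and writing $a=c_{i,k}r_i\ne 0$ and $b=c_{j,k}r_j\ne 0$, the defining inequality rearranges to $|\theta_j+(a/b)\theta_i|\le C\delta/|b|$. For each fixed $\theta_i\in S^{d-1}$, this confines $\theta_j$ to a Euclidean ball of radius $O(\delta)$ centered at $-(a/b)\theta_i$, a point of norm $|a/b|$. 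If $|a|\ne|b|$, this ball misses $S^{d-1}$ entirely once $\delta$ is sufficiently small; if $|a|=|b|$, the center lies on $S^{d-1}$ and the intersection with $S^{d-1}$ is a spherical cap of $(d-1)$-dimensional measure $O(\delta^{d-1})$. Integrating over $\theta_i\in S^{d-1}$ and summing over the finite collection of dependent $k$ yields $(\sigma\times\sigma)(\Omega_{i,j}(\delta))\lesssim\delta^{d-1}\le C'\delta$ for small $\delta$, since $d\ge 2$.

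I do not expect a real obstacle: the argument is essentially a spherical-cap volume estimate combined with the finiteness of $J$, and the constants absorb harmlessly into $C'$. The only point requiring attention is the interpretation of $L_k(r_i\theta_i,r_j\theta_j)$, which is forced by the linear relation together with the structural hypothesis \eqref{eq:structure} as explained above.
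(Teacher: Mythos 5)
The lemma statement as printed contains a typographical error, and your proof reads it literally. The intended condition is $\bigl|\,|L_k(r_i\theta_i,r_j\theta_j)| - r_k\,\bigr| \le C\delta$, not $|L_k(r_i\theta_i,r_j\theta_j)| \le C\delta$. This is clear from the two places the lemma is invoked: in the proof of Lemma~\ref{lemma:3rdorder} the condition actually used is $\bigl|\,|ax+by|-r_k\,\bigr|\le C\delta$, and in Lemma~\ref{lemma:Holder2} the relevant sets are $\delta$-neighborhoods of the boundary $\{|L_k|=r_k\}$ of $T_k$. The paper's own proof of the present lemma confirms it: the computation expands $|ar_i\theta_i + br_j\theta_j|^2 - r_k^2$, not $|ar_i\theta_i + br_j\theta_j|^2$, as a linear function of $\theta_i\cdot\theta_j$, which is pointless if the set under consideration were $\{|L_k|\le C\delta\}$.

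Your spherical-cap estimate is a correct proof of the statement as literally written, but that statement is a typo, and the cap argument does not carry over to the intended condition. There, for fixed $\theta_i$, the admissible $\theta_j$ form the intersection of $S^{d-1}$ with a thin spherical \emph{shell} (radii $r_k/|b|\pm O(\delta)$ about $-(a/b)\theta_i$, in your notation $a=c_{i,k}r_i$, $b=c_{j,k}r_j$), not a small cap. This is a $(d-2)$-sphere thickened by $O(\delta)$, and the $O(\delta)$ measure bound requires that the two spheres meet transversally, equivalently $\bigl||a|-|b|\bigr|<r_k<|a|+|b|$, which the paper encodes as $|s|<1$ for $s=(a^2+b^2-r_k^2)/(2ab)$. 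That inequality is not automatic: it is precisely what the paper derives from the strict admissibility hypothesis, via a short contradiction argument. Without it the bound genuinely degrades, e.g.\ to $O(\delta^{1/2})$ near a tangency when $d=2$. So the essential analytical content of the lemma is the use of strict admissibility to rule out tangency, and this ingredient is absent from your argument.
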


\begin{proof}
Write $\bx = (x_1,x_2,x')\in \reals^d\times\reals^d\times (\reals^d)^{m-2}$.
Since $L_i^1$ and $L_j^1$ are linearly independent,
it is possible to make a linear change of variables,
after which $L_i(x_1,x_2,x')=x_1$  and $L_j(x_1,x_2,x')=x_2$.

Consider any $k\in J\setminus\{i,j\}$ such that $\{L_i,L_j,L_k\}$ is linearly dependent;
if there is no such index $k$, then the conclusion holds vacuously.
It suffices to bound the measure of the set of all $(\theta_i,\theta_j)$ that satisfy 
$|L_k(r_i\theta_i,r_j\theta_j)|\le \delta$ for this particular index,
for there are only finitely many such indices. 
$L_k(\bx)$ can be expressed as $L_k(x_1,x_2,x')=a x_1 + b x_2$ for some coefficients $a,b\in\reals$.
Neither coefficient can vanish, since $L_k$ is a scalar multiple of neither $L_i$ nor $L_j$.

Then
\begin{align*}|a r_i\theta_i+b r_j\theta_j|^2-r_k^2
 = (a^2 r_i^2 + b^2 r_j^2 -r_k^2) + 2a b r_ir_j \theta_i\cdot\theta_j
= 2a br_ir_j \big( s +  \theta_i\cdot\theta_j\big)
\end{align*}
where
$s=\frac{a^2 r_i^2 + b^2 r_j^2 -r_k^2}{2a b r_ir_j}$.
The coefficient $2a b r_i r_j$ is nonzero; its precise value is of no importance here.
We claim that the strict admissibility hypothesis guarantees that $|s|<1$.
Granting this,
for any $\theta_i\in S^{d-1}$ and any $s\in(-1,1)$,
the set of all $\theta_j\in S^{d-1}$ satisfying $|s + \theta_i\cdot\theta_j|\le \eta$
has measure $O(\eta)$ with respect to $\theta$.
The desired conclusion then follows immediately.

We claim that 
$r_k < |a|r_i+|b|r_j$. The proof is by contradiction.
If $r_k\ge |a|r_i+|b|r_j$
then any $\bx\in\reals^2$ satisfying $|L_n^1(\bx)|< r_n$ for 
every $n\in J\setminus\{k\}$ would in particular satisfy these inequalities
for $n=i,j$, and therefore $|L_k^1(\bx)| \le |a|\cdot|x_1| + |b|\cdot|x_2|
< |a|r_i+|b|r_j\le r_k$. This contradicts the strict admissibility hypothesis,
which states that there exists $\bx$ satisfying $|L_k(\bx)|=r_k$
but $|L_n(\bx)|<r_n$ for every $J\owns n\ne k$.

Likewise, $\big|\,|a|r_i-|b|r_j\,\big| < r_k$. 
If not, suppose without loss of generality that $|a|r_i\ge |b|r_j$.
Then $|a|r_i \ge |b|r_j + r_k$. 
If $|L_n^1(\bx)|< r_n$ for every $n\in J\setminus\{i\}$
then 
\[ |L_i^1(\bx)| = |x_1| = \big| a^{-1}(ax_1+bx_2) - a^{-1}bx_2 \big|
< |a|^{-1} r_k + |a|^{-1}|b|r_j \le r_i,
\]
which again is a contradiction.

Now squaring both inequalities, and combining the results, yields the claim.
\end{proof}

The following elementary inequality will be invoked several times in the sequel:
For any surjective linear mappings $\ell_j:\reals^{m'd}\to\reals^d$,
any $v_i\in\reals^d$,
and any nonnegative Lebesgue measurable functions $f_j$,
\begin{equation} \label{trivialHBL}
\int_{\reals^{m'd}} \prod_{i\in I} f_i(\ell_i(\bx)+v_i)\,d\bx
\le C\prod_{i\in I'} \norm{f_i}_1\prod_{i\in I''}\norm{f_i}_\infty
\end{equation}
where
$I$ is the disjoint union of $I',I''$,
$|I'|=m'$,
$C<\infty$ depends only on $\{\ell_i: i\in I'\}$, 
and  $\bigcap_{i\in I'} \kernel(\ell_i)=\{0\}$.
This can be proved by 
majorizing the integrand by the product of
$\prod_{i\in I''}\norm{f_i}_\infty$
with
$\prod_{i\in I'} f_i(\ell_i(\bx)+v_i)$,
factoring out the $L^\infty$ norms,
identifying $\reals^{m'd}$ with $\prod_{i\in I'} \reals^d$
via an invertible linear transformation,
and making  an affine change of variables, after which $\bx\mapsto \ell_i(\bx)+v_i$
is the projection onto the $i$--th factor.

\begin{lemma}\label{lemma:Holder}
Let $m\ge 3$. 
Let $k\ne k'\in J$. Suppose that for each $n\ne k,k'$,
$\{\scriptl_k^1,\scriptl_{k'}^1,\scriptl_n^1\}$ is linearly independent. Then
$M_{k,k'}$ is Lipschitz continuous. 
\end{lemma}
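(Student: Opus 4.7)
The plan is to represent $M_{k,k'}$ explicitly as the Lebesgue measure of a family of convex bodies that depends on $(x,y)$ through simultaneous (but differently sized) translations of the constraint balls, and then bound the difference $M_{k,k'}(x+h,y)-M_{k,k'}(x,y)$ via telescoping combined with \eqref{trivialHBL}.

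First I would exploit the linear independence of $L_k^1$ and $L_{k'}^1$ to perform a linear change of variable on $\reals^{md}$ after which $L_k(\bx)=x_1$ and $L_{k'}(\bx)=x_2$, where $\bx=(x_1,x_2,x')$ with $x_1,x_2\in\reals^d$ and $x'\in\reals^{(m-2)d}$. Writing $L_n(\bx)=a_n x_1+b_n x_2+\tilde L_n(x')$ for each $n\in J\setminus\{k,k'\}$, the hypothesis that $\{L_k^1,L_{k'}^1,L_n^1\}$ is linearly independent forces $\tilde L_n^1:\reals^{m-2}\to\reals$ to be nonzero, and by the structural hypothesis \eqref{eq:structure} the associated $\tilde L_n:\reals^{(m-2)d}\to\reals^d$ is surjective. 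Unfolding the definition of $M_{k,k'}$ gives
\begin{equation*}
M_{k,k'}(x,y)=\int_{\reals^{(m-2)d}}\prod_{n\in J\setminus\{k,k'\}}\one_{B_n}\!\bigl(\tilde L_n(x')+a_n x+b_n y\bigr)\,dx'.
\end{equation*}

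Next I would apply the standard telescoping identity to $\prod_n \one_{B_n}(u_n)-\prod_n \one_{B_n}(v_n)$ with $u_n=\tilde L_n(x')+a_n(x+h)+b_n y$ and $v_n=\tilde L_n(x')+a_n x+b_n y$. The $n$-th term in the resulting sum is, in absolute value, at most
\begin{equation*}
\int_{\reals^{(m-2)d}}\one_{B_n\symdif(B_n-a_n h)}\!\bigl(\tilde L_n(x')+a_n x+b_n y\bigr)\prod_{m\in J\setminus\{k,k',n\}}\one_{B_m}\!\bigl(\tilde L_m(x')+\cdots\bigr)\,dx'.
\end{equation*}
To this I would apply \eqref{trivialHBL} with $m'=m-2$, placing $n$ together with $m-3$ further indices in $I'$ so that the corresponding $\{\tilde L_i^1:i\in I'\}$ form a basis of $(\reals^{m-2})^\ast$. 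Since $|B_n\symdif(B_n-a_n h)|\le C|h|$ for the ball $B_n\subset\reals^d$ (and is trivially bounded otherwise), each telescoped term is $O(|h|)$; summing over the finitely many $n$ gives $|M_{k,k'}(x+h,y)-M_{k,k'}(x,y)|\le C|h|$. The same argument in the second variable finishes the proof.

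The one step that requires a verification rather than a direct computation is that $\{\tilde L_i^1:i\in J\setminus\{k,k'\}\}$ spans $(\reals^{m-2})^\ast$, so that the required basis exists and \eqref{trivialHBL} is applicable. Suppose $v=(0,0,v_3,\dots,v_m)\in\reals^m$ satisfies $\tilde L_i^1(v_3,\dots,v_m)=0$ for every $i\in J\setminus\{k,k'\}$. Then $L_i^1(v)=0$ for every such $i$, and also $L_k^1(v)=0$ by the choice of coordinates; thus $L_i^1(v)=0$ for all $i\ne k'$, and nondegeneracy condition (iii) applied with $j=k'$ forces $v=0$. That a nonzero $\tilde L_n^1$ can always be extended to a basis selected from this spanning family is then the standard basis-exchange statement. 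The hypothesis $m\ge 3$ is used precisely to make $\reals^{m-2}$ nontrivial, and the hypothesis on linear independence of each triple $\{L_k,L_{k'},L_n\}$ is what ensures each individual $\tilde L_n^1\ne 0$, so that $n$ itself is always an admissible element of the selected basis.
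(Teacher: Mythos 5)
Your proof is correct and follows essentially the same path as the paper's: change coordinates so that $L_k,L_{k'}$ become the first two coordinate projections, telescope the difference of products (the paper phrases this as ``multilinearity and the triangle inequality''), bound each telescoped term by the $L^1$ norm of a translated difference of a single indicator $\one_{B_n}$ via \eqref{trivialHBL}, and conclude since $|B_n\symdif(B_n-\eta)|=O(|\eta|)$. One thing worth noting in your favor: the paper merely asserts that a suitable basis $J'\ni n$ with $\cap_{i\in J'}\kernel(\tilde L_i)=\{0\}$ exists, while you actually verify the needed spanning property $\operatorname{span}\{\tilde L_i^1: i\in J\setminus\{k,k'\}\}=(\reals^{m-2})^*$ by invoking nondegeneracy condition (iii) with $j=k'$; this is the right justification and closes a small gap left implicit in the paper's proof.
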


\begin{proof}
After a change of variables making $L_k(x)\equiv x_1$ and $L_{k'}(x)\equiv x_2$, we have
\begin{equation}
M_{k,k'}(w,z) = 
c_{k,k'}\int_{\reals^{(m-2)d}} \prod_{j\ne k,k'} f_j(\tilde L_j(\by) + \psi_j(w,z))\,d\by
\end{equation}
where $f_j=\one_{B_j}$, $\by = (y_j: J\owns j\ne k,k')$, and $\tilde L_j,\psi_j$ are 
linear mappings to $\reals^d$ determined by $\{L_j\}$. 
The positive scalars $c_{k,k'}$ are reciprocals of absolutes value of Jacobian determinants. 

By multilinearity of this expression and
the triangle inequality, in order to control
$M_{k,k'}(w,z)-M_{k,k'}(w',z')$, it suffices to analyze
\begin{equation}
\int_{\reals^{(m-2)d}} \prod_{j\ne k,k'} g_j
\,d\by
\end{equation}
where $g_n = f_n(\tilde L_n(\by) + \psi_n(w,z)) - f_n(\tilde L_n(\by) + \psi_n(w',z')) $
for a single index $n\in J\setminus\{k,k'\}$,
and each index $j$ not equal to any of $k,k',n$,
the factor $g_j$ takes one of the two forms
$f_j(\tilde L_j(\by) + \psi_j(w,z))$
or
$f_j(\tilde L_j(\by) + \psi_j(w',z'))$.

This integral has absolute value majorized by
$C\norm{f_n(\cdot + \eta)-f_n(\cdot)}_{L^1(dy)}$
where $\eta = [\psi_n(w,z)-\psi_n(w',z')]$
and $C<\infty$ depends on $\scriptl$ and on $\be$.
Indeed, 
for $m=3$ this is immediate, since $\norm{g_j}_\infty\le 1$ for every $j\ne n$
and $\tilde L_n^d:\reals^d\to\reals^d$ is invertible by hypothesis.
If $m\ge 4$ then again by a hypothesis of the lemma,
$\{L_k,L_{k'},L_n\}$ is linearly independent and therefore
there exists a subset $J'\subset J\setminus\{k,k'\}$
of cardinality $m-2$
such that $n\in J'$ and $\cap_{i\in J'}\kernel(\tilde L_i)=\{0\}$.
Thus \eqref{trivialHBL} can be invoked.

Finally,
$\norm{f_n(\cdot + \eta)-f_n(\cdot)}_{L^1(dy)}$
is a Lipschitz continuous function of $\eta$,
and hence of $(w,z)-(w',z')$.
\end{proof}

The conclusion of Lemma~\ref{lemma:Holder} does not hold for $m=2$,
when $M_{i,j}$ is a constant multiple of a product of indicator functions.
The next result provides a weaker conclusion that will suffice for our purpose.

\begin{lemma}\label{lemma:Holder2}
Let $m,d\ge 2$. 
Let $i\ne j\in J$. 
Then $M_{i,j}$ can be expressed as the product of a Lipschitz continuous function with 
$\one_{\Omega(i,j)}$, where the $\sigma\times\sigma$
measure of the set of all $(\theta,\theta')\in S^{d-1}\times S^{d-1}$
such that $(r_i\theta,r_j\theta')$ lies within distance
$\delta$ of the boundary of $\Omega(i,j)$ is $O(\delta)$.
\end{lemma}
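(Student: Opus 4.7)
The plan is to express $M_{i,j}$ as the product of an indicator function of an explicitly described convex polytope with a factor that inherits Lipschitz continuity from the argument of Lemma~\ref{lemma:Holder}. After the linear change of coordinates making $L_i(\bx)=x_1$ and $L_j(\bx)=x_2$, I would partition $J\setminus\{i,j\}$ into the set $J_0$ of indices $n$ for which $\{L_i^1,L_j^1,L_n^1\}$ is linearly dependent and its complement $J_1$. For $n\in J_0$, write $L_n^1=a_n L_i^1+b_n L_j^1$; the corresponding factor $\one_{B_n}(\tilde L_n(\by)+\psi_n(w,z))$ in the integral representation from the proof of Lemma~\ref{lemma:Holder} is then independent of $\by$ and equals $\one_{B_n}(a_n w+b_n z)$. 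Pulling these finitely many factors out of the integral would give
\[ M_{i,j}(w,z) = G(w,z)\cdot\one_{\Omega(i,j)}(w,z), \qquad \Omega(i,j)=\bigcap_{n\in J_0}\{(w,z):|a_n w+b_n z|\le r_n\}, \]
where $G(w,z)$ is a positive constant multiple of the integral of $\prod_{n\in J_1}\one_{B_n}(\tilde L_n(\by)+\psi_n(w,z))$ over $\by\in\reals^{(m-2)d}$.

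The next step is to verify that $G$ is Lipschitz. When $m=2$ the set $J_1$ is empty, the integral collapses, and $G$ is a positive constant, so this is immediate. When $m\ge 3$ I plan to run the telescoping-in-one-factor argument from the proof of Lemma~\ref{lemma:Holder} verbatim on the restricted family indexed by $J_1$; this requires, for each $n\in J_1$, a subset $J'\subset J_1$ of cardinality $m-2$ with $n\in J'$ and $\bigcap_{n'\in J'}\kernel(\tilde L_{n'})=\{0\}$, so that \eqref{trivialHBL} can be invoked after factoring out the $L^\infty$ norms of the remaining factors. The crucial algebraic observation is that nondegeneracy of $\scriptl^1$ makes $\{L_{n'}^1:n'\in J\}$ a spanning set of $(\reals^m)^*$; modulo $\Span\{L_i^1,L_j^1\}$, the members of $J_0$ together with $L_i^1,L_j^1$ themselves vanish, so $\{L_{n'}^1:n'\in J_1\}$ descends to a spanning set of the $(m-2)$-dimensional quotient. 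I can therefore select $J'\subset J_1$ of cardinality $m-2$ containing $n$ such that $\{L_{n'}^1:n'\in J'\}$ has linearly independent images in the quotient, which is equivalent to the required kernel condition. The telescoping bound then yields $|G(w,z)-G(w',z')|\le C|(w,z)-(w',z')|$.

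For the boundary estimate I would note that $\partial\Omega(i,j)\subset\bigcup_{n\in J_0}\{(w,z):|a_n w+b_n z|=r_n\}$, so that $(r_i\theta_i,r_j\theta_j)$ lying within Euclidean distance $\delta$ of $\partial\Omega(i,j)$ forces $||L_n(r_i\theta_i,r_j\theta_j)|-r_n|\le C\delta$ for some $n\in J_0$. Using the identity
\[ |L_n(r_i\theta_i,r_j\theta_j)|^2-r_n^2 = 2a_n b_n r_i r_j\bigl(s_n+\theta_i\cdot\theta_j\bigr) \]
derived in the proof of Lemma~\ref{lemma:smallmeasure}, together with the bound $|s_n|<1$ obtained there from strict admissibility and the fact that $|L_n|+r_n$ is bounded below, this transforms to $|s_n+\theta_i\cdot\theta_j|\le C'\delta$, a set whose $\sigma\times\sigma$ measure is $O(\delta)$ by the spherical estimate used in that same lemma. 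Summing over the finitely many $n\in J_0$ gives the bound. The main obstacle I anticipate is the case $m=2$, which lies outside the scope of Lemma~\ref{lemma:Holder}; the value of the decomposition above is precisely that it isolates the failure of Lipschitz regularity onto the hyperplane pieces making up $\partial\Omega(i,j)$, whose thin shells are then controlled by bookkeeping already present in Lemma~\ref{lemma:smallmeasure}.
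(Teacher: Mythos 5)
Your proposal is correct and follows essentially the same approach as the paper: the same split of $J\setminus\{i,j\}$ into indices with $L_n\in\Span\{L_i,L_j\}$ (producing the indicator factor, the paper's $J''$, your $J_0$) versus the rest (producing the Lipschitz factor, the paper's $J'$, your $J_1$), the same appeal to the argument of Lemma~\ref{lemma:Holder} for the integral factor, and the same appeal to Lemma~\ref{lemma:smallmeasure} for the $O(\delta)$ boundary estimate. The one place you go beyond the paper's text is the quotient-space observation that $\{L_{n'}^1:n'\in J_1\}$ still spans $(\reals^m)^*/\Span\{L_i^1,L_j^1\}$, which is exactly what is needed to rerun the telescoping argument on the restricted family; the paper invokes "the proof of Lemma~\ref{lemma:Holder}" without making this explicit, so your check is a welcome addition rather than a deviation.
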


In this statement, $\Omega(i,j)$ could be $\reals^d\times\reals^d$,
so that $\one_{\Omega(i,j)}\equiv 1$ and hence $M_{i,j}$ is Lipschitz continuous;
but this is not the general case.

\begin{proof}
Let $J''$ be the set of all $k\in J\setminus\{i,j\}$ for which $L_k$
belongs to the span of $L_i,L_j$. Let $J'=J\setminus (J''\cup\{i,j\})$.
Then after a change of variables, $L_i(x)\equiv x_1$, $L_j(x)\equiv x_2$,
and with the same notations as in the proof of Lemma~\ref{lemma:Holder},
\begin{equation} \label{eq:forholder}
M_{i,j}(w,z) = c_{i,j} \prod_{k\in J''} \one_{T_{k}}(w,z)
\int_{\reals^{(m-2)d}} \prod_{k\in J'} \one_{B_k}(\tilde L_k(\by)+\psi_k(w,z))\,d\by,
\end{equation}
where $T_k$ is the set of all $(w,z)$ for which $L_k(\bx)\in B_k$
whenever $x_1=w$ and $x_2=z$. Recall that for each $k\in J''$, $L_k(\bx)$ depends only 
on the components $x_1,x_2$ of $\bx$. 
The integral in \eqref{eq:forholder} is to be interpreted as $1$ if $m-2=0$.

The integral in \eqref{eq:forholder} defines a Lipschitz continuous function of $(w,z)$, 
by the proof of Lemma~\ref{lemma:Holder}.
The set $T= \bigcap_{k\in J''}T_k$ has the required property,
by Lemma~\ref{lemma:smallmeasure}.
\end{proof}

\begin{lemma}\label{lemma:differentiability}
Let $m,d\ge 2$.
If $(\scriptl,\be)$ is strictly admissible, then
the left derivative of $t\mapsto K_i(t,0,\dots,0)$
is strictly negative at $t=r_i$.
\end{lemma}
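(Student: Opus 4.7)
The plan is to derive an integral representation that expresses $\kappa(t) := K_i(t,0,\dots,0)$ as an average of one-dimensional slice volumes over $d-1$ transverse coordinates, and then to combine this with Brunn-Minkowski log-concavity and the $d=1$ strict admissibility hypothesis (ii) to conclude.

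For the representation, I identify $\reals^{md}=(\reals^d)^m$ and, for $\bx=(x_1,\dots,x_m)$, write $\bx^{(k)}=(x_1^k,\dots,x_m^k)\in\reals^m$ for the tuple of $k$-th coordinates. The structural hypothesis \eqref{eq:structure} yields $(L_n^d(\bx))_k=L_n^1(\bx^{(k)})$, so $L_i^d(\bx)=(t,0,\dots,0)$ is equivalent to $L_i^1(\bx^{(1)})=t$ together with $L_i^1(\bx^{(k)})=0$ for $k\ge 2$, while $|L_n^d(\bx)|^2=\sum_k L_n^1(\bx^{(k)})^2$. Integrating $\bx^{(1)}$ out first via Fubini produces
\begin{equation*}
\kappa(t)=c\int_{(\ker L_i^1)^{d-1}} V\bigl(\tilde{\br}(\by);t\bigr)\,d\by
\end{equation*}
for a positive Jacobian constant $c$, where $\by=(\bx^{(2)},\dots,\bx^{(d)})$, $\tilde r_n(\by)^2=r_n^2-\sum_{k\ge 2}L_n^1(\bx^{(k)})^2$ (the integrand being zero when this is negative), and
\begin{equation*}
V(\tilde{\br};s)=\Bigl|\{\bz\in\reals^m:\,L_i^1(\bz)=s,\ |L_n^1(\bz)|\le\tilde r_n\ \forall n\ne i\}\Bigr|_{m-1}.
\end{equation*}
Up to a positive rescaling, $V(\br;\cdot)$ is the 1D kernel of $(\scriptl^1,\be^{1/d})$; scale invariance of strict admissibility and hypothesis (ii), combined with 1D monotonicity, yield $V(\br;0)>V(\br;r_i)$ strictly.

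Next I record two preliminary facts coming from strict admissibility (i): $\kappa(r_i)>0$ and $r_i$ lies in the interior of $\{t\ge 0:\kappa(t)>0\}$. Condition (i) furnishes $\bx^1\in\reals^m$ with $|L_i^1(\bx^1)|=r_i$ and $|L_n^1(\bx^1)|<r_n$ for $n\ne i$; embedding $\bx^1$ as the first row of $\bx^\star\in(\reals^d)^m$ with the remaining rows zero puts $\bx^\star\in\scriptk_\be^d$ with $L_i^d(\bx^\star)=\pm r_i(1,0,\dots,0)$ and all other constraints strict, so the resulting open fiber neighborhood contributes positive mass. Perturbing $\bx^\star$ in a direction $\bv$ with $L_i^d(\bv)=(1,0,\dots,0)$ then yields $\bx\in\scriptk_\be^d$ with $|L_i^d(\bx)|>r_i$, placing $r_i$ strictly inside the support of $\kappa$.

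The main argument is by contradiction: suppose $D^-\kappa(r_i)=0$. The Brunn-Minkowski log-concavity of $\kappa$ noted in Section~\ref{section:defns} makes $D^-\log\kappa$ nonincreasing in $t$, so concavity forces $D^-\log\kappa(t)\ge D^-\log\kappa(r_i)=0$ for $t\le r_i$, while the monotonicity of $\kappa$ on $[0,\infty)$ forces the opposite inequality. Hence $\kappa$ is constant on $[0,r_i]$, so $\kappa(0)=\kappa(r_i)$. The integral representation then gives
\begin{equation*}
0=\kappa(0)-\kappa(r_i)=c\int\bigl[V(\tilde{\br}(\by);0)-V(\tilde{\br}(\by);r_i)\bigr]\,d\by,
\end{equation*}
with a nonnegative integrand (1D radial monotonicity), which therefore vanishes for a.e.\ $\by$. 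But at $\by=\bzero$, $\tilde{\br}(\bzero)=\br$ and $V(\br;0)>V(\br;r_i)$ strictly. Joint continuity of $V(\tilde{\br};s)$ in $(\tilde{\br},s)$, as the volume of a continuously parametrized bounded convex polytope, propagates this strict inequality to a neighborhood of $\bzero$ of positive Lebesgue measure, contradicting the a.e.\ vanishing. Hence $D^-\kappa(r_i)<0$. The main technical point is the clean setup of the Fubini-based integral representation and the verification of joint continuity of $V$; once these are in hand the rest of the argument is routine.
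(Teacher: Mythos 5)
Your argument is correct, and it is a genuinely different route from the paper's. Both arguments begin with the same Fubini-based slicing, which expresses $\kappa(t)=K_i(t,0,\dots,0)$ as an integral over the transverse coordinates of one-dimensional kernels $t\mapsto V(\tilde{\br}(\by);t)$. But the two proofs then diverge. The paper argues \emph{directly}: it uses hypothesis (ii), the continuity of $\by\mapsto V(\tilde\br(\by);\cdot)$, and log-concavity of each one-dimensional slice kernel to show that $D^-V(\tilde\br(\by);r_i)<0$ uniformly for $\by$ near $0$, observes that this one-sided derivative is $\le 0$ for all $\by$, and integrates. You instead argue \emph{by contradiction at the top level}: if $D^-\kappa(r_i)=0$ then log-concavity of $\kappa$ itself (the Brunn-Minkowski fact quoted after Notation~\ref{defn:K_j}) together with monotonicity forces $\kappa\equiv\kappa(r_i)$ on $[0,r_i]$, and then $\kappa(0)=\kappa(r_i)$ contradicts the strict inequality $V(\br;0)>V(\br;r_i)$ furnished by hypothesis~(ii), propagated to a positive-measure neighborhood of $\by=0$. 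Your version has the advantage of avoiding any exchange of one-sided differentiation with integration and needing the $d=1$ hypothesis only at the single fiber $\by=0$; the paper's direct version is more in the spirit of a quantitative bound, since it yields an explicit negative upper estimate on the derivative if one tracks constants. One small point worth tightening: the phrase ``joint continuity of $V(\tilde\br;s)$ \dots as the volume of a continuously parametrized bounded convex polytope'' slightly misstates what is being used. $V$ is a \emph{slice} volume, not a full polytope volume, and slice volumes can jump when the slicing hyperplane exits the interior of the body (e.g.\ slicing a cube at a face). Continuity in $(\tilde\br,s)$ does hold here, but precisely because $V(\br;0)>0$ and $V(\br;r_i)>0$ — which you do establish from hypothesis~(i) — so the slicing hyperplanes pass through the interior and remain there under small perturbations. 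Flagging that the positivity is what licenses the continuity would make the step airtight.
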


Since the $K_i$ are radial, this is equivalent to strict negativity
of the corresponding one-sided radial directional derivative
of $K_i$ at each point $x\in\reals^d$ satisfying $|x|=r_i$.
For $d=1$, strict negativity of the left derivative
is part of the definition of strict admissibility.

\begin{proof}
Let $d\ge 2$.
Identify $\reals^d$ with $\reals^1\times\reals^{d-1}$, with coordinates
$z = (z_1,z')$ with $z'\in\reals^{d-1}$.
Thus $\reals^{(m-1)d}$
is identified with a product of $m-1$ factors of
$\reals^1\times\reals^{d-1}$,
with coordinates $\bz = ((u_1,w_1),\dots, (u_{m-1},w_{m-1}))$
where each $u_k\in\reals^1$ and $w\in\reals^{d-1}$.
Then 
\begin{equation} \label{Ki:iterated}
K_i(t,0,\dots,0)
= c_i \int_{\bw\in \reals^{(m-1)(d-1)}}
\int_{\bu\in\reals^{m-1}}
\prod_{j\ne i}
\one_{|L_j^d(\bw,\bu,(t,0,\dots,0))|\le r_j}\,d\bu\,d\bw
\end{equation}
for a certain positive constant $c_i$.
$L_j^d(\bw,\bu,(t,0,\dots,0))$
is naturally viewed as \[(L_j^1(\bw,t),L_j^{d-1}(\bu,0))\in \reals^1\times\reals^{d-1},\]
and thus
\begin{equation}
|L_j^d(\bw,\bu,(t,0,\dots,0))|^2
= |L_j^1(\bw,t)|^2 + |L_j^{d-1}(\bu,0)|^2.
\end{equation}
Thus the inner integral in \eqref{Ki:iterated} is rewritten as
\begin{equation} \label{Ki:inner}
K_{i,\bw}(t) =\int_{\bu\in\reals^{m-1}} \prod_{j\ne i}
\one_{|L_j^1(\bu,t)| \le (r_j^2-|L_j^{d-1}(\bw,0)|^2)^{1/2}} \,d\bu,
\end{equation}
and \eqref{Ki:iterated} expresses 
$K_i(t,0,\dots,0)$ as the integral with respect to $\bw$ of
one-dimensional kernels $K_{i,\bw}(t)$.
For each fixed $\bw$,
this integral is precisely the definition of the corresponding kernel $t\mapsto K_i^1(t)$
associated to $\scriptl^1$ in the theory for $d=1$.
with each $e_j$ replaced by $(r_j^2-|L_j^{d-1}(\bw,0)|^2)^{1/2}$.
Therefore this is a logarithmically concave function of $t$.

The second clause of our Definition~\ref{defn:strictlyadmissiblehigher} of strict admissibility
states that the left derivative of $t\mapsto K_i^1(t)$ is strictly negative at $t=r_i$.
This strict negativity is stable under small perturbations of $\bw$,
because $K_{i,\bw}^1(t)$ is continuous in $\bw$, strictly positive near $(\bw,0)$, 
and logarithmically concave with respect to $t$.
Logarithmic concavity, together with the property that $t\mapsto K_{i,\bw}^1(t)$
assumes its maximum value at $t=0$,
also ensure that this one-sided derivative 
exists and is nonpositive for every $\bw$.
Therefore the integral with respect to $\bw$ satisfies the stated conclusion.
\end{proof}

\begin{lemma}\label{lemma:diff2}
Suppose that $d,m\ge 2$.
Then for each $i\in J$, $K_i$ is continuously differentiable at $|y|=r_i$.
\end{lemma}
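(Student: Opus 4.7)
By the $\mathrm{SO}(d)$-symmetry arising from \eqref{association}, the function $K_i$ is radial; write $K_i(y) = k_i(|y|)$. Since $y \mapsto |y|$ is smooth on $\reals^d\setminus\{0\}$ and $r_i>0$, it suffices to prove that $k_i$ is $C^1$ at $t=r_i$. Lemma~\ref{lemma:differentiability} already furnishes $k_i'(r_i^-)<0$, and log-concavity near $r_i$ yields existence of $k_i'(r_i^\pm)$ with $k_i'(r_i^-)\ge k_i'(r_i^+)$, so the goal reduces to showing $k_i'(r_i^+)=k_i'(r_i^-)$ together with continuity of $k_i'$ at $r_i$. The plan is to exploit the iterated-integral representation derived in the proof of Lemma~\ref{lemma:differentiability},
\[ k_i(t) = c_i \int_{\bw\in\reals^{(m-1)(d-1)}} K_{i,\bw}^1(t)\,d\bw, \]
where $K_{i,\bw}^1(t)$ is the $(m-1)$-dimensional Lebesgue measure of the polytope $\{\bu\in\reals^{m-1} : |L_j^1(\bu,t)|\le \rho_j(\bw)\ \forall j\ne i\}$ with $\rho_j(\bw)=(r_j^2-|L_j^{d-1}(\bw,0)|^2)_+^{1/2}$. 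For each fixed $\bw$, $K_{i,\bw}^1$ is piecewise polynomial in $t$ of degree $\le m-1$, smooth away from finitely many breakpoints at which the combinatorial type of the polytope changes; strict admissibility in the $d=1$ theory does not prevent $t=r_i$ from being such a breakpoint. The hope is that integration over the auxiliary parameter $\bw$ smooths this out.

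The key claim is that for Lebesgue-a.e.\ $\bw$, $t=r_i$ is not a breakpoint of $K_{i,\bw}^1$. Fix a combinatorial configuration $(S,\epsilon)$ with $S\subset J\setminus\{i\}$ of cardinality $m$ and $\epsilon:S\to\{-1,+1\}$. A breakpoint of type $(S,\epsilon)$ at $(t,\bw)$ corresponds to a point $\bu$ satisfying $L_j^1(\bu,t)=\epsilon_j\rho_j(\bw)$ for all $j\in S$; these constitute $m$ affine-linear equations in $\bu\in\reals^{m-1}$ whose consistency, obtained by eliminating $\bu$ via $m-1$ independent equations, reduces to a single relation $F_{S,\epsilon}(t,\bw)=0$. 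By the nondegeneracy hypothesis, no $L_j^1$ with $j\ne i$ is a scalar multiple of $L_i^1$, whence $L_j^{d-1}(\cdot,0)$ is not identically zero and $\rho_j$ depends nontrivially on $\bw$ on the region $\{\rho_j>0\}$. Consequently $F_{S,\epsilon}(r_i,\cdot)$ is a nontrivial real-analytic function on this region, and its zero set has Lebesgue measure zero. Taking the finite union over $(S,\epsilon)$ preserves this, proving the claim. (When the forms $\{L_j^1\}_{j\in S}$ are linearly dependent in $(\reals^m)^*$, linear dependencies produce additional consistency relations among the $\rho_j(\bw)$, each of which remains nontrivial by the same reasoning.)

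With the key claim in hand, $(K_{i,\bw}^1)'(t)$ is well-defined and continuous in $t$ on a neighborhood of $r_i$ for a.e.\ $\bw$, and the one-sided derivatives at $r_i$ coincide. These derivatives are uniformly bounded on compact $(t,\bw)$-sets as derivatives of polynomials of bounded degree whose coefficients depend smoothly on $\bw$, and $K_{i,\bw}^1$ vanishes for $\bw$ outside the compact set $\{|L_j^{d-1}(\bw,0)|\le r_j\ \forall j\ne i\}$. Dominated convergence then licenses differentiation under the integral sign, giving $k_i'(t)=c_i\int (K_{i,\bw}^1)'(t)\,d\bw$ for $t$ near $r_i$, and continuity of $k_i'$ at $r_i$ follows from the same argument. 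The main technical obstacle is establishing nontriviality of $F_{S,\epsilon}(r_i,\cdot)$ in all cases, particularly the degenerate configurations in which the $m$ forms $\{L_j^1\}_{j\in S}$ fail to be in general position; handling this requires a careful bookkeeping of the consistency relations induced by the linear dependencies and verifying that each inherits nontrivial dependence on $\bw$ from the $\rho_j$.
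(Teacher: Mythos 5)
Your plan is broadly the same as the paper's: express $K_i(t,0,\dots,0)$ as an integral over an auxiliary parameter $\bw$ of piecewise-polynomial functions of $t$, show that for almost every $\bw$ the value $t=r_i$ is a combinatorial breakpoint only on a null set, and differentiate under the integral. But the step you flag parenthetically at the end is not a technical footnote---it is exactly where the proof lives, and your proposed justification does not go through. The assertion that nontrivial $\bw$-dependence of each $\rho_j$ individually implies that $F_{S,\epsilon}(r_i,\cdot)$ is a nontrivial function of $\bw$ is a non-sequitur: combinations such as $\rho_j(\bw)-\rho_k(\bw)$ can be identically zero (for instance when $L_j^1$ and $L_k^1$ have the same restriction to the hyperplane $\{t=0\}$ and $r_j=r_k$, so that $\rho_j\equiv\rho_k$), and in that event $F_{S,\epsilon}(r_i,\cdot)$ reduces to a constant whose nonvanishing you have not established. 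The genuinely degenerate case where the forms $\{L_j^1:j\in S\}$ span a proper subspace of $(\reals^m)^*$ after restriction to $\{t=0\}$ is also dismissed as ``the same reasoning,'' but it requires a separate analysis.

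The paper handles precisely this by (a) choosing a generic rotation of the complement of the $t$-axis so that the coefficients $s_j$ of $t$ in $L_j^1$ become pairwise distinct, (b) splitting $\bu=(u,v)$ and integrating out only the one-dimensional $u$ first, so that the relevant endpoint-coincidence relation lives on the larger $(v,\bw)$-space and receives genericity from the $v$-dependence whenever $\tilde L_j\ne\tilde L_k$, and (c) when the $v$-dependence cancels (i.e., $\tilde L_j=\tilde L_k$), noting that $(s_j-s_k)\bar t\ne 0$ while $\rho_j-\rho_k$ is either nonconstant or identically zero---so in all cases the bad set has positive codimension or is empty. Your formulation, having already integrated out all of $\bu$, loses the leverage from (b) and never uses the crucial facts $\bar t\ne 0$ and $s_j\ne s_k$. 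To repair the argument you would need to reintroduce those ingredients and carry out the case analysis explicitly; as written, the key claim is unproved and the proof is incomplete.
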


For $d=1$,
the conclusion also holds under a genericity hypothesis
imposed in \cite{christBLL}. 
It also holds for $m\ge 3$ under the additional hypothesis that any three elements of $\scriptl$ are
linearly independent.
These facts are not invoked in this paper, so their proofs are omitted.

\begin{proof}[Proof of Lemma~\ref{lemma:diff2}] 
Let $d>1$.
By exploiting the symmetry \eqref{dilationaction},
we may change variables by dilations in $\reals^1$
and by a linear transformation of $\reals^m$ to reduce to the situation in which 
$\reals^m$ is equipped with coordinates $(t,u,v)\in\reals^1\times\reals^1\times \reals^{m-2}$,
$L_i^1(t,u,v)\equiv t$,
and for each $j\ne i$, $L_j^1(t,u,v)= s_j t + u + \tilde L_j^1(v)$
where $\tilde L_j^1:\reals^{m-2}\to\reals$ is linear and surjective, each $s_j\in\reals$,
and the coefficients $s_j$ are pairwise distinct.
Indeed, because no $L_j$ is a scalar multiple of another,
the coefficients $s_j$ become pairwise distinct upon any generic rotation
of $\reals^{m-1}$.

$K_i$ can be expressed as
\begin{equation} 
K_i(t,0,\dots,0)
= 
c_i 
\int
\prod_{j\ne i}
\one_{|L_j^1(t,u,v)| \le (r_j^2-|L_j^{d-1}(0,\bw)|^2)^{1/2}} \,du\,dv
\,d\bw,
\end{equation}
where $c_i$ is a positive constant and the integral is over all
$(u,v,w)\in\reals\times\reals^{m-2}\times\reals^{(d-1)(m-1)}$.
The $j$-th factor in the integrand is the indicator function
of an interval \[[-s_j t-\tilde L_j(v)-\rho_j(v,\bw),\,\, -s_j t-\tilde L_j(v)+\rho_j(v,\bw)]\] 
evaluated at $u$, where 
\begin{equation} \rho_j(\bw) = (r_j^2-|L_j^{d-1}(0,\bw)|^2)^{1/2}
\ \text{ if $|L_j^{d-1}(0,\bw)|\le r_j$,} \end{equation}
and $\rho_j(\bw)=0$ otherwise. 

Therefore 
\begin{equation}  \label{Ki1}
K_i(t,0,\dots,0) = c_i 
\int_{v\in\reals^{m-2}}
\int_{\bw\in \reals^{(m-1)(d-1)}}
\lambda(v,\bw,t)
\,dv \,d\bw
\end{equation}
where
\begin{equation} 
\left\{ \begin{aligned}
a(v,\bw,t) &= \max_k [-s_k t-\tilde L_k(v) -\rho_k(\bw)],
\\
b(v,\bw,t) & =\min_j [-s_j t -\tilde L_j(v) +\rho_j(\bw)],  
\\
\lambda(v,\bw,t) &=\max(b(v,\bw,t)-a(v,\bw,t),\,0). 
\end{aligned} \right. \end{equation}

The function $t\mapsto\lambda(v,\bw,t)$ 
is Lipschitz continuous, uniformly in $v,\bw$.
Therefore
for any subset $S\subset\reals^{m-2}\times \reals^{(d-1)(m-1)}$ of finite Lebesgue measure,
\begin{equation} \label{SLip}
\big\|\int_S \lambda(v,\bw,t) \,dv \,d\bw\big\|_{\text{Lip}} \le C|S|,
\end{equation}
where $C<\infty$ depends only on $\scriptl,i$,
and $\norm{\cdot}_{\text{Lip}}$ denotes the Lipschitz norm of
a function whose domain is $\reals^1$.

Fix an arbitrary point $\bart\ne 0$, for which it is to be shown that $K_i$ is
continuously differentiable in a neighborhood of $\bart$.
Let $\Omega\subset\reals^{(m-1)(d-1)}$ be the set of all $\bw$ that satisfy $|L_j^{d-1}(0,\bw)|<r_j$
for every $j\in J\setminus\{i\}$. 
Thus $\lambda(v,\bw,t)\equiv 0$ for any $\bw\notin\Omega$, for all $v$.
$\Omega$ is bounded, open, and convex, hence connected. 

Consider the functions $-s_j \bart-\tilde L_j(v)+\rho_j(\bw)$ and $-s_k \bart -\tilde L_k(v) + \rho_k(\bw)$
for arbitrary distinct indices $j,k$, neither of which equals $i$.
The set of $(v,\bw)\in \reals^{m-2}\times \Omega$ for which these two quantities are equal
is an analytic variety of positive codimension. 
This holds because
$(s_j-s_k) \bart\ne 0$,
$\Omega$ is open and connected,
and the gradient of  $\bw\mapsto \rho_k(\bw)-\rho_j(\bw)$
is a real analytic vector-valued function on $\Omega$
that does not vanish identically.
This, in turn, follows from the definition
$\rho_l(\bw) = (r_l^2-|L_l^{d-1}(\bw)|^2)^{1/2}$, 
because $L_j^{d-1}$ and $L_k^{d-1}$ are linear mappings,
neither of which is a scalar multiple of the other. 
The same reasoning applies to any pair of functions
$-s_j \bart-\tilde L_j(v)-\rho_j(\bw)$ and $-s_k \bart -\tilde L_k(v) - \rho_k(\bw)$,
as well as to any pair
$-s_j \bart-\tilde L_j(v)-\rho_j(\bw)$ and $-s_k \bart -\tilde L_k(v) + \rho_k(\bw)$,
provided $j\ne k\in J\setminus\{i\}$.

Any real analytic variety of positive codimension has Lebesgue measure
equal to $0$. Moreover, 
the Lebesgue measure of the set of all 
points within any bounded region
that are within distance $\delta$
of such a variety, tends to $0$ as $\delta\to 0$.
Therefore for $t=\bart$, the region of integration in \eqref{Ki1}
can be replaced by 
the subset of $\reals^{m-2}\times \Omega$
in which no two of these functions are equal. 
Thus
\begin{equation} 
K_i(\bart,0,\dots,0)
= 
\sum_{j\ne k} 
\iint_{\Omega_{j,k}} (-s_j \bart-\tilde L_j(v) + \rho_j(\bw) +s_k\bart + \tilde L_k(v)-\rho_k(\bw)) \,dv \,d\bw
\end{equation}
where the sum is over all distinct $j,k\in J\setminus\{i\}$, 
the sets $\Omega_{j,k}$ are pairwise disjoint and open,
$b(v,\bw,\bart) = b_j(v,\bw,\bart)>a_k(v,\bw,\bart)=a(v,\bw,\bart)$ for all $\bw\in \Omega_{j,k}$,
$\Omega_{j,k}\subset\Omega$,
and if $\Omega_{j,k}$ is nonempty then its boundary is a real analytic variety of
positive codimension.
Moreover, the integrands vanish identically outside some bounded set.

If $\Omega'$ is a relatively compact measurable subset of $\Omega_{j,k}$,
then 
\[\iint_{\Omega'} (-s_j t-\tilde L_j(v) + \rho_j(\bw) +s_k t + \tilde L_k(v)-\rho_k(\bw)) \,d\bw\]
is an affine function of $t\in\reals$, hence is a smooth function of $t$.
Moreover, according to \eqref{SLip},
the supremum norm of the derivative, with respect to $t$, of
the function defined by this integral is majorized by $C|\Omega'|$.

For each $\delta>0$, there exists a partition of $\Omega$
into sets $\Omega'_{j,k}\subset\Omega_{j,k}$,
$S$, and $T$,
with $\Omega'_{j,k}=\Omega'_{j,k}(\delta)$ a compact subset of $\Omega_{j,k}$,
$|S|\le o_\delta(1)$,
and $\lambda(v,\bw,t)\equiv 0$
whenever $|t-\bart|\le\delta$ and $(v,\bw)\in T$.
Moreover, these can be constructed so that $\Omega'_{j,k}$ increases
and $S,T$ decreases, with respect to set inclusion,
as $\delta$ decreases.
For small $\delta$  and all $t$ satisfying $|t-\bart|\le\delta$,
$K_i(t,0,\cdots,0)$ is a constant multiple of 
$\sum_{j,k} \int_{\Omega'_{j,k}(\delta)} \lambda(v,\bw,t)\,dv\,d\bw$
plus a remainder that is $o_\delta(1)$ in Lipschitz norm.
The principal term is continuously differentiable 
in a small neighborhood of $\bart$, because
$\lambda(v,\bw,t)\equiv b_j(v,\bw,t)-a_k(v,\bw,t)$
for all $(v,\bw)\in \Omega'_{j,k}(\delta)$, for all $t$ sufficiently close to $\bart$.
Because 
\begin{equation}
\Big\|\int_{\Omega'_{j,k}(\delta)} \lambda(v,\bw,t)\,dv\,d\bw
-\int_{\Omega'_{j,k}(\delta')} \lambda(v,\bw,t)\,dv\,d\bw\Big\|_{\text{Lip}}
=O(|\Omega'_{j,k}(\delta)\symdif \Omega'_{j,k}(\delta')|),
\end{equation}
because these domains of integration $\Omega'_{j,k}(\delta)$ are nested with respect to $\delta$,
and because they are all subsets of a fixed bounded set,
these main terms converge in the $C^1$ norm as $\delta\to 0$.
This concludes the proof of Lemma~\ref{lemma:diff2}.
\end{proof}

\section{A series of reductions}

\subsection{A reduction exploiting monotonicity}

\begin{lemma}\label{lemma:flow}
Let $d\in\naturals$. For $j\in J$, let $E_j\subset \reals^d$ be a bounded Lebesgue measurable set. Then, there exist mappings $[0,1]\ni t\mapsto E_j(t)$ of equivalence classes of Lebesgue measurable subsets of $\reals^d$ such that:
\begin{enumerate}
\item $E_j(0)=E_j$ and $E_j(1)=E_j^\star$.
\item $|E_j(t)|=|E_j|$ for all $t\in[0,1]$.
\item $|E_j(s)\Delta E_j(t)|\rightarrow0$ as $s\rightarrow t$.
\item The function $t\mapsto \scriptt_\scriptl(\bE(t))$ is continuous and nondecreasing on $[0,1]$.
\end{enumerate}
\end{lemma}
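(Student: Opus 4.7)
The plan is to build the flow by concatenating applications of continuous Steiner symmetrization (CSS), applied simultaneously to each coordinate $E_j$ in a common direction, along a sequence of directions whose iterated discrete Steiner symmetrizations drive any bounded measurable set to its radial rearrangement. For each unit vector $v\in S^{d-1}$ and each bounded measurable $F\subset\reals^d$, CSS supplies a one-parameter family $[0,1]\ni s\mapsto F[s,v]$ satisfying $F[0,v]=F$, $F[1,v]$ equal to the Steiner symmetrization of $F$ in direction $v$, $|F[s,v]|=|F|$ for all $s$, and continuity of $s\mapsto F[s,v]$ into $L^1$. Applied simultaneously to every $E_j$ with a common direction, the functional $\scriptt_\scriptl$ is continuous and nondecreasing in $s$: continuity follows from the $L^1$-continuity of $\bF\mapsto\scriptt_\scriptl(\bF)$ on tuples of indicator functions supported in a fixed bounded set, a consequence of \eqref{trivialHBL}; monotonicity reduces by Fubini in the hyperplane perpendicular to $v$ to the monotonicity of the one-dimensional Brascamp-Lieb-Luttinger functional under one-dimensional CSS.

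Next, fix a sequence of directions $(v_k)_{k\geq 1}$ in $S^{d-1}$ with the property that for every bounded measurable $F\subset\reals^d$ the iterated Steiner symmetrizations $S_{v_k}\cdots S_{v_1}(F)$ converge in $L^1$ to $F^\star$; such universal sequences are classical, e.g.\ any sequence in which each element of a countable dense subset of $S^{d-1}$ occurs infinitely often. Concatenate the CSS flows by partitioning $[0,1)$ into subintervals $I_k=[1-2^{1-k},1-2^{-k}]$; on $I_k$ apply the simultaneous CSS in direction $v_k$, reparametrized linearly in $t$ so that $s=0$ corresponds to the left endpoint of $I_k$ and $s=1$ to the right endpoint, with initial configuration the tuple already present at the left endpoint. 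Define $E_j(1)=E_j^\star$.

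Finally, to verify the four conclusions: property (1) at $t=0$, property (2), and properties (3) and (4) on $[0,1)$ are immediate from the construction and the properties of CSS recalled above. At $t=1$, property (1) and $L^1$-continuity follow from the choice of $(v_k)$, since $\bE(1-2^{-k})$ converges to $\bE^\star$ in $L^1$ while the CSS within each $I_k$ has $L^1$-oscillation controlled by the $L^1$-distance between its endpoints, yielding a summable telescoping bound. Continuity of $\scriptt_\scriptl(\bE(t))$ at $t=1$ then follows from $L^1$-continuity of $\scriptt_\scriptl$ on tuples supported in a common bounded region, again via \eqref{trivialHBL}. The principal obstacle is establishing the monotonicity of $\scriptt_\scriptl$ under simultaneous CSS in a common direction; this is a continuous-in-time upgrade of the slicewise Steiner-symmetrization proof of the BLL inequality itself, and while standard in the rearrangement literature, requires careful reduction to the one-dimensional case.
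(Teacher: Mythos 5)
Your approach is essentially the one the paper has in mind: the paper's ``proof'' is a reference to \cite{christRSult} (and \cite{BLL},\cite{liebloss}), and the construction there is exactly the concatenation of simultaneous continuous Steiner symmetrizations along a universal sequence of directions, with the monotonicity and continuity of $\Phi_\scriptl$ reduced fiberwise to the one-dimensional case. Two of your sub-claims deserve more care, though neither is fatal to the argument. First, the assertion that \emph{any} sequence in which each member of a countable dense subset of $S^{d-1}$ appears infinitely often yields universal $L^1$-convergence of iterated Steiner symmetrizations is stronger than the classical statement of existence of a universal sequence; the literature on convergence of Steiner iterates (Bianchi, Burchard, Gronchi, Vol\v{c}i\v{c}, et al.) shows that density alone is not sufficient and that the correct sufficient conditions require proof, so this should be replaced by a citation to a specific universal-sequence theorem rather than presented as folklore. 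Second, the ``summable telescoping bound'' on the $L^1$-oscillation within each $I_k$ should be made explicit: it rests on the fact that continuous Steiner symmetrization is non-expansive in $L^1$ (in particular, since $E_j^\star$ is a fixed point of CSS in every direction, $s\mapsto |E_j[s,v]\symdif E_j^\star|$ is nonincreasing along each flow segment), from which $\sup_{t\in I_k}|E_j(t)\symdif E_j^\star|\le|E_j(1-2^{1-k})\symdif E_j^\star|\to 0$; you should state this CSS property rather than leave it implicit. With those two points cited or justified, the proof is complete and matches the paper's intended argument.
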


This is proved in the same way as its analogue for the special case of the Riesz-Sobolev inequality,
using results developed for instance in \cite{BLL},\cite{liebloss}.
See \cite{christRSult}.

\begin{lemma}
In order to prove Theorem~\ref{thm:stability},
it suffices to show that for each $\scriptl,\be$ satisfying
the hypotheses, there exists $\delta_0>0$ such that the conclusion of the theorem holds 
under the additional hypothesis that
\begin{equation}\label{eq:smalldelta}
\dist(\bE,\scripto(\bE^\star))\leq \delta_0\max_j|E_j|.
\end{equation}
\end{lemma}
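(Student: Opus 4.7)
The plan is to use the monotone flow from Lemma~\ref{lemma:flow} to transport any tuple $\bE$ toward the orbit of $\bEstar$ without decreasing $\scriptt_\scriptl$, and then to invoke the assumed near-orbit estimate at the first moment the flow enters the $\delta_0$-neighborhood of $\scripto(\bEstar)$.

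Fix $(\scriptl,\be)\in S$ and let $\delta_0, c>0$ be constants for which the near-orbit case of the theorem is assumed to hold. Given an arbitrary $J$-tuple $\bE$ with $|E_j|=e_j$, apply Lemma~\ref{lemma:flow} to produce a path $\{\bE(t)\}_{t\in[0,1]}$ with $\bE(0)=\bE$, $\bE(1)=\bEstar$, and $t\mapsto \scriptt_\scriptl(\bE(t))$ nondecreasing. Two elementary observations enter the argument. First, by the triangle inequality for symmetric differences, $\dist(\bE(s),\scripto(\bEstar))-\dist(\bE(t),\scripto(\bEstar))$ is bounded in absolute value by $\max_j|E_j(s)\symdif E_j(t)|$, which tends to $0$ as $s\to t$ by property (3) of the flow; hence $t\mapsto\dist(\bE(t),\scripto(\bEstar))$ is continuous. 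Second, taking $\psi=\mathrm{Id}$ and $\bv=0$ in the definition of $\dist$ yields the trivial bound $\dist(\bE,\scripto(\bEstar))\le 2\max_j e_j$.

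If $\dist(\bE,\scripto(\bEstar))\le \delta_0\max_j e_j$ the hypothesis applies directly. Otherwise, since the distance is $0$ at $t=1$ and exceeds $\delta_0\max_j e_j$ at $t=0$, the intermediate value theorem supplies $t^*\in(0,1)$ with $\dist(\bE(t^*),\scripto(\bEstar))=\delta_0\max_j e_j$. Applying the near-orbit estimate at $\bE(t^*)$ and then invoking monotonicity of $\scriptt_\scriptl$ along the flow,
\begin{equation*}
\scriptt_\scriptl(\bE)\le \scriptt_\scriptl(\bE(t^*))\le \scriptt_\scriptl(\bEstar)-c\,\bigl(\delta_0\max_j e_j\bigr)^2.
\end{equation*}
Combining with the trivial upper bound $\dist(\bE,\scripto(\bEstar))^2\le 4(\max_j e_j)^2$ converts this to
\begin{equation*}
\scriptt_\scriptl(\bE)\le \scriptt_\scriptl(\bEstar)-(c\delta_0^2/4)\,\dist(\bE,\scripto(\bEstar))^2,
\end{equation*}
which is the desired conclusion with $c'=c\delta_0^2/4$.

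The only non-routine point is that Theorem~\ref{thm:stability} requires the constant to be uniform over the compact set $S$. Since $S$ is compact and the near-orbit statement is assumed to hold for each $(\scriptl,\be)\in S$, uniformity follows either from a finite covering argument applied to the local pairs $(\delta_0,c)$, or by checking that the local constants depend in a controlled fashion on $(\scriptl,\be)$. This uniformity is not the main obstacle; the real work of the paper lies in proving the near-orbit inequality itself, while the reduction step is an essentially formal consequence of Lemma~\ref{lemma:flow} together with the uniform a priori bound on $\dist(\bE,\scripto(\bEstar))$.
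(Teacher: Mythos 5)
Your proof is correct and is exactly the argument the paper intends; the paper disposes of this lemma in one line by citing \cite{christRSult} and \cite{christBLL}, and the mechanism in those references is precisely what you carried out: continuity of $t\mapsto\dist(\bE(t),\scripto(\bE^\star))$ (the $1$-Lipschitz bound by $\max_j|E_j(s)\symdif E_j(t)|$ is the right observation), the intermediate value theorem to locate a first crossing time $t^*$ of the threshold $\delta_0\max_j e_j$, monotonicity of $\scriptt_\scriptl$ along the flow to pass from $\bE$ to $\bE(t^*)$, and the trivial a priori bound $\dist(\bE,\scripto(\bE^\star))\le 2\max_j e_j$ to convert the resulting absolute deficit into a deficit proportional to $\dist(\bE,\scripto(\bE^\star))^2$. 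The one point you flag but leave open — uniformity of the constant over the compact set $S$ — is indeed not resolved at this stage in the paper either; it is handled implicitly by the fact that the spectral machinery in the later sections produces constants depending continuously on $(\scriptl,\be)$, so your decision to set it aside is consistent with the paper's treatment.
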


This is an immediate corollary, as in \cite{christRSult},\cite{christBLL}, 
of the monotonicity and continuity of the flow of the preceding lemma.

\subsection{Reduction to Perturbations Near the Boundary} \label{subsect:dagger}

Let $E_j\subset\reals^d$ $(j\in J)$ be bounded Lebesgue measurable sets with $|E_j|=e_j$ with $\{e_j\}_{j\in J}\subset(0,\infty)^{|J|}$, where $(\scriptl,\be)$ is strictly admissible. Let $\delta=\dist(\bE,\scripto(\bE))$ and choose 
$\psi\in\Sl(d)$ and $v\in\reals^{md}$ to satisfy
\begin{equation} \max_j|\tilde{E_j}\Delta E_j^\star|\leq2\delta, \end{equation}
where $\tilde{E_j}=\psi(E_j)+L_j(v)$. 
We replace $E_j$ with $\tilde{E_j}$, as this doesn't affect the inequality we are trying to prove.

For each $j$, let $B_j=E_j^\star$ and define $f_j$ by
\begin{equation}
\one_{E_j}=\one_{E_j^\star}+f_j=\one_{B_j}+f_j.
\end{equation}

We have the first-order expansion
\begin{equation}
\scriptt_\scriptl(\bE)=\scriptt_\scriptl(\one_{B_j}+f_j:j\in J)=\scriptt_\scriptl(\bE^\star)+\sum_j\langle K_j,f_j\rangle+O(\delta^2),
\end{equation}
where the $K_j$ are defined as in (\ref{eq:Kjdefn}). 
Recalling that $K_j$ is radially symmetric,
we abuse notation by writing
$K_j(r_j)$ as shorthand for $K_j(x)$ where $|x|=r_j$.
Since $\int f_j=0$  and the two functions
$K_j(x)- K_j(r_j)$ and $-f_k$ are both nonnegative on $B_j$ and nonpositive on $\reals^{d}\setminus B_j$,
\begin{equation}
\langle K_j,f_j\rangle =\int (K_j(x)-K_j(r_j))f_j(x)dx
=-\int |K_j(x)-K_j(r_j)|\cdot|f_j(x)|dx.
\end{equation}

Let $\lambda>0$ be a large positive constant which will be chosen independently of $\delta$. We assume $\lambda\delta$ is bounded above by a small positive constant dependent only on $\be$, which is possible 
because we showed in the previous subsection that $\delta$ can be chosen to satisfy
$\delta\leq C\max_j|E_j|$ for some fixed $C<\infty$. By Lemma \ref{lemma:differentiability},
\begin{equation} \label{nonzerogradpayoff}
\langle K_j,f_j\rangle \leq -c\lambda\delta\int_{||x|-r_j|\geq\lambda\delta}|f_j(x)|dx
=-c\lambda\delta|\{x\in E_j\Delta B_j:||x|-r_j|\geq\lambda\delta\}|.
\end{equation}

In this subsection, we will reduce matters to the case in which $E_j\Delta B_j$ is contained entirely in 
$\{x:\big|\,|x|-r_j\,\big|\leq\lambda\delta\}$ for each $j$. 
It was shown in \cite{christRSult} that for each $j\in J$, there exists a set $E_j^\dagger$ with the following properties:
\begin{equation} \label{Edaggerproperties}
\left\{ \begin{gathered}
|E_j^\dagger|=|E_j|
\\
E_j\Delta B_j \text{ is the disjoint union of $E^\dagger_j\Delta B_j$ and $E_j\Delta E_j^\dagger$ }
\\
E_j^\dagger\Delta B_j\subset\{x\in E_j\Delta B_j:\big|\,|x|-r_j\,\big|\leq \lambda\delta\}
\\
|E_j^\dagger\Delta E_j|\leq2|\{x\in E_j\Delta B_j: \big|\,|x|-r_j\,\big|> \lambda\delta\}
\end{gathered} \right. \end{equation}

Write $\bE^\dagger=(E_j^\dagger:j\in J)$.

\begin{lemma}\label{lemma:replacewdagger}
Let $d\geq1$ and $\be\in(0,\infty)^{|J|}$, where $(\scriptl,\be)$ is strictly admissible. There exist $\lambda<\infty$ and $\delta_0,c>0$ with the following property. Let $\bE$ be a $J$-tuple of bounded, Lebesgue measurable subsets of $\reals^d$ satisfying $\max_j|E_j\Delta E_j^\star|=\delta\leq\delta_0$. 
Let $\bE^\dagger$ satisfy \eqref{Edaggerproperties}. Then
\begin{equation}\label{eq:replacewdagger}
\scriptt_\scriptl(\bE)\leq\scriptt_\scriptl(\bE^\dagger)-c\lambda\sum_j|E_j\Delta E_j^\star|\cdot\sum_j|E_j\Delta E_j^\dagger|.
\end{equation}
\end{lemma}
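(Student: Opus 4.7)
The plan is to expand both $\scriptt_\scriptl(\bE)$ and $\scriptt_\scriptl(\bE^\dagger)$ multilinearly around $\bE^\star$, subtract, and show that the linear-order difference already captures the desired gain while the higher-order cross terms are dominated when $\lambda$ is taken large. I introduce $f_j:=\one_{E_j}-\one_{B_j}$, $g_j:=\one_{E_j^\dagger}-\one_{B_j}$, and $h_j:=f_j-g_j=\one_{E_j}-\one_{E_j^\dagger}$. By (\ref{Edaggerproperties}), $g_j$ and $h_j$ have disjoint supports, the support of $h_j$ lies in the far annulus $\{x:\big||x|-r_j\big|>\lambda\delta\}$, $|h_j|=\one_{E_j\Delta E_j^\dagger}$, and $h_j=+1$ where $|x|>r_j$ with $h_j=-1$ where $|x|<r_j$.

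Expanding each $\prod_j(\one_{B_j}+f_j)(L_j\bx)$ and $\prod_j(\one_{B_j}+g_j)(L_j\bx)$ and subtracting yields
\[\scriptt_\scriptl(\bE)-\scriptt_\scriptl(\bE^\dagger)=\sum_{j\in J}\langle K_j,h_j\rangle+R,\]
where the $|S|=1$ pieces collapse via (\ref{eq:Kjdefn}) into $\sum_j\langle K_j,h_j\rangle$ and $R$ collects the $|S|\geq 2$ cross terms. For the linear part, $\int h_j=0$ lets me replace $K_j$ by $K_j-K_j(r_j)$; the integrand $(K_j(x)-K_j(r_j))h_j(x)$ is then pointwise nonpositive on the support of $h_j$, and I expect its absolute value to be at least $c\lambda\delta$ there by Lemma~\ref{lemma:differentiability} combined with Lemma~\ref{lemma:diff2}, provided $\lambda\delta$ stays below a fixed small threshold (guaranteed by (\ref{eq:smalldelta})). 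Summing produces $\sum_j\langle K_j,h_j\rangle\leq-c\lambda\delta\sum_j|E_j\Delta E_j^\dagger|$, exactly as in (\ref{nonzerogradpayoff}).

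To estimate $R$, my plan is to apply the telescoping identity
\[\prod_{j\in S}f_j-\prod_{j\in S}g_j=\sum_{k\in S}\Bigl(\prod_{j\in S,\,j<k}f_j\Bigr)h_k\Bigl(\prod_{j\in S,\,j>k}g_j\Bigr),\]
which isolates exactly one factor of $h_k$ per summand while leaving the remaining factors bounded by $1$ in $L^\infty$ and by $e_j+O(\delta)$ in $L^1$. Applying (\ref{trivialHBL}) with $L^1$ norms taken on an $m$-element sub-collection of $J\setminus\{k\}$ (which exists by nondegeneracy condition (iii)) bounds each telescoped term by $C\delta^{|S|-1}\int|h_k|$, and summing over $k\in S$ and $S$ with $|S|\geq 2$ gives $|R|\leq C\delta\sum_j|E_j\Delta E_j^\dagger|$ with $C$ independent of $\lambda$ and $\delta$. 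Choosing $\lambda$ larger than a fixed multiple of $C/c$ absorbs $R$ into half of the linear gain, yielding $\scriptt_\scriptl(\bE)-\scriptt_\scriptl(\bE^\dagger)\leq-c'\lambda\delta\sum_j|E_j\Delta E_j^\dagger|$; this converts to the stated conclusion via $\delta=\max_j|E_j\Delta E_j^\star|\geq|J|^{-1}\sum_j|E_j\Delta E_j^\star|$.

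The main obstacle is the first step's uniform pointwise lower bound: Lemma~\ref{lemma:differentiability} provides only the strict negativity of a one-sided radial derivative at a single radius, whereas what is needed is $|K_j(x)-K_j(r_j)|\geq c\lambda\delta$ uniformly over all $x$ with $\big||x|-r_j\big|\geq\lambda\delta$. Upgrading to this uniform estimate will require combining Lemma~\ref{lemma:diff2} (continuous differentiability at $|x|=r_j$, which together with rotational symmetry of $K_j$ yields a uniform linear lower bound in a small radial neighborhood of the sphere) with the log-concavity of $r\mapsto K_j(rx)$ (to extend the bound to the full admissible range of radii). Both ingredients are in hand, so this is careful bookkeeping rather than new analysis.
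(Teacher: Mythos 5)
Your approach — expand $\scriptt_\scriptl(\bE)$ and $\scriptt_\scriptl(\bE^\dagger)$ multilinearly around $\bE^\star$, isolate $\sum_j\langle K_j,h_j\rangle$ as the leading term, use the strict negativity of the radial derivative of $K_j$ at $r_j$ for the gain, and dominate the cross terms $R$ by choosing $\lambda$ large — is the argument the paper has in mind when it says the lemma is a ``simple consequence'' of \eqref{nonzerogradpayoff}. Two details are over-claimed and should be tightened, though neither changes the structure.

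First, it is not true that $\support h_j\subset\{x:\big|\,|x|-r_j\,\big|>\lambda\delta\}$. Conditions \eqref{Edaggerproperties} do \emph{not} say $E_j\Delta E_j^\dagger$ lies in the far annulus; the balancing needed to enforce $|E_j^\dagger|=|E_j|$ can (and generically does) put part of $E_j\Delta E_j^\dagger$ in the near annulus. What you do get, and what is actually used, is: (a) the sign relation $h_j>0\Rightarrow|x|>r_j$, $h_j<0\Rightarrow|x|<r_j$ holds on all of $E_j\Delta E_j^\dagger$ (this follows from the disjoint-union clause of \eqref{Edaggerproperties}, as you observed); and (b) the far-annulus portion of $E_j\Delta E_j^\dagger$ contains all of $\{x\in E_j\Delta B_j:\big|\,|x|-r_j\,\big|>\lambda\delta\}$, whose measure is $\geq\tfrac12|E_j\Delta E_j^\dagger|$ by the last line of \eqref{Edaggerproperties}. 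Since the integrand $(K_j-K_j(r_j))h_j$ is nonpositive everywhere, you may simply discard the near-annulus contribution and estimate $\langle K_j,h_j\rangle\leq-\int_{||x|-r_j|\geq\lambda\delta}|K_j-K_j(r_j)|\,|h_j|\leq -\tfrac{c}{2}\lambda\delta|E_j\Delta E_j^\dagger|$.

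Second, the telescoping bound $C\delta^{|S|-1}\int|h_k|$ is optimistic: for $|S|>m$, or when $\{L_j:j\in S\}$ is dependent, you cannot place all $|S|$ indices into the $m$-element subset $I'$ used in \eqref{trivialHBL}. What you can always do — and what suffices — is choose $I'\ni k$ of size $m$ with $\bigcap_{i\in I'}\kernel L_i=\{0\}$ that also contains one other index $j'\in S$: by nondegeneracy (ii) $L_k,L_{j'}$ are linearly independent, and by the spanning property (iii) this pair extends to such an $I'$ inside $J$. That yields the needed $|R|\leq C\delta\sum_j|E_j\Delta E_j^\dagger|$ with $C$ independent of $\lambda,\delta$, after which your choice of $\lambda$ large absorbs $R$. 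One further remark: the lemma is stated for $d\geq1$, while Lemmas~\ref{lemma:differentiability} and \ref{lemma:diff2} are stated for $d\geq2$; for $d=1$ the required lower bound on the one-sided radial derivative comes directly from strict admissibility clause (ii) together with log-concavity of $K_j^1$ (which gives $D^+K_j^1(r_j)\le D^-K_j^1(r_j)<0$ since $K_j^1(r_j)>0$ under strict admissibility (i)).
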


This is a simple consequence of \eqref{nonzerogradpayoff},
as in corresponding lemmas in \cite{christRSult},\cite{christBLL}, 
with no new elements. Therefore the details are omitted.
\qed

Lemma~\ref{lemma:replacewdagger} leads to a reduction to
perturbations near the boundaries of the balls $B_j$, in the strong sense that the
symmetric difference between $E_j$ and $B_j$ is contained in a small neighborhood
of the boundary of $B_j$, with a natural measure of smallness. 
There are two possibilities.
On one hand,
if $\max_j|E_j\Delta E_j^\dagger|\geq\tfrac{1}{10}\max_j|E_j\Delta E_j^\star|$ 
then Lemma \ref{lemma:replacewdagger} yields
\begin{equation}
\scriptt_\scriptl(\bE)\leq\scriptt_\scriptl(\bE^\dagger)-c\Big(\sum_j|E_j\Delta E_j^\star|\Big)^2\leq \scriptt_\scriptl(\bE^\star)-c\dist(\bE,\scripto(\bE^\star))^2
\end{equation}
since $\scriptt_\scriptl(\bE^\dagger)\leq\scriptt_\scriptl(\bE^\star)$
by the Brascamp-Lieb-Luttinger inequality.
In this case, the conclusion of the Theorem has already been reached.

On the other hand, if 
\begin{equation} \label{smallsymmdiff}
\max_j|E_j\Delta E_j^\dagger|<\tfrac{1}{10}\max_j|E_j\Delta E_j^\star|
\end{equation}
then Lemma \ref{lemma:replacewdagger} still yields
\begin{equation} \scriptt_\scriptl(\bE)\leq\scriptt_\scriptl(\bE^\dagger).  \end{equation}
\eqref{smallsymmdiff} implies that
$\dist(\bE^\dagger,\scripto(\bE^\star))\geq \tfrac{1}{2}\dist(\bE,\scripto(\bE^\star))$. 
If we can establish the conclusion for $\bE^\dagger$, that is, if we show that
$\scriptt_\scriptl(\bE^\dagger)\leq\scriptt_\scriptl(\bE^\star)-c\dist(\bEstar,\scripto(\bE^\dagger))^2$,
then these bits of information can  be combined to deduce that 
\[\scriptt_\scriptl(\bE)
\le \scriptt_\scriptl(\bE^\dagger)
\le \scriptt_\scriptl(\bE^\star)-c\dist(\bEstar,\scripto(\bE^\dagger))^2 
\le \scriptt_\scriptl(\bE^\star)-c\dist(\bEstar,\scripto(\bE))^2,\] 
as desired.
Thus matters have been reduced to the situation in which $E_j\Delta B_j$ 
is contained entirely in $\{x:\big|\,|x|-r_j\,\big|\leq\lambda\delta\}$ for each index $j\in J$.

\subsection{Reduction to the Boundar(ies)} \label{subsect:boundaries}

Following \cite{christRSult}, we next reduce matters from an analysis of sets in
$\reals^d$, to an analhysis of functions in $\lt(S^{d-1})$ and to compact selfadjoint linear
operators acting on this Hilbert space.
We drop the superscript $\dagger$, and refer to the set $\bE^\dagger$ introduced
in \S\ref{subsect:dagger} simply as $\bE$.
Thus we assume henceforth that
$\dist(\bE, \scripto(\bE^\star))\leq\delta_0$, 
$\max_j|E_j\Delta E_j^\star|\leq 4\dist(\bE, \scripto(\bE^\star))$, and 
\begin{equation}
E_j\Delta E_j^\star\subset\{x:\big|\,|x|-r_j\,\big|\leq\lambda\delta\}
\end{equation}
for a certain large constant $\lambda<\infty$ that depends only on $d,\scriptl,\be$.

Continue to write $B_j=E_j^\star$ and $\one_{E_j}=\one_{B_j}+f_j$, but now refine this representation by writing $f_j=f_j^+-f_j^-$, where $f_j^+=\one_{E_j\setminus B_j}$ and $f_j^-=\one_{B_j\setminus E_j}$. Let $(r,\theta)$ be polar coordinates on $\reals^d$ and define $F_j^\pm\in L^2(S^{d-1})$ by
\begin{equation}\label{define:bigFpm}
F_j^\pm(\theta)=\int_{\reals^+}f_j^\pm(t\theta)t^{d-1}dt.
\end{equation}
Further define 
\begin{equation} \label{defn:bigF}
F_j=F_j^+-F_j^-. 
\end{equation}
Under the hypothesis that $E_j\Delta B_j\subset\{x:||x|-r_j|\leq\lambda\delta\}$,
\begin{equation}\label{eq:reducetoL2norms}
|E_j\Delta B_j|^2\asymp||F_j^+||_{L^2}^2+||F_j^-||_{L^2}^2,
\end{equation}
where $u\asymp v$ means $u\leq Cv$ and $v\leq Cu$ for a constant $C$ depending only on $d$ and $\be$. (We have this dependence in the above since $\lambda$ depends only on $d$ and $\be$.)

Let $\sigma$ denote the rotation-invariant surface measure on $S^{d-1}$, 
normalized so that Lebesgue measure in $\reals^d$ is equal to $r^{d-1}drd\sigma(\theta)$. 
For each $i\neq j\in J$, define quadratic forms $Q_{i,j}$ on $L^2(S^{d-1},\sigma)$ by
\begin{equation}\label{define:Qij}
Q_{i,j}(F,G)=\iint_{S^{d-1}\times S^{d-1}}F(x)G(y)M_{i,j}(r_ix, r_jy)d\sigma(x)d\sigma(y),
\end{equation}
where $M_{i,j}$ are as in \eqref{eq:Lij}.
Formally, $Q_{i,j}(F,G) = \langle T_{i,j}F,G\rangle$
where the inner product is that of $L^2(S^{d-1},\sigma)$
and $T_{i,j}$ is the integral operator associated to the kernel $M_{i,j}(r_i x,r_j y)$.
Since $M_{i,j}$ is bounded and Borel measurable, 
each $T_{i,j}$ is a well-defined bounded linear operator on $\lt(S^{d-1},\sigma)$.
Moreover, $T_{i,j}$ is compact, since the restriction of $M_{i,j}$
to $S^{d-1}\times S^{d-1}$ is bounded and Borel measurable, hence belongs
to $L^2(\sigma\times\sigma)$.

\begin{lemma}\label{lemma:Qijsymmetric}
For each $i\ne j$, $Q_{i,j}$ is a symmetric quadratic form on $L^2(S^{d-1},\sigma)$.
\end{lemma}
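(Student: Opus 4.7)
The plan is to derive the symmetry of $Q_{i,j}$ from an $O(d)$-invariance of the kernel $M_{i,j}$ that is inherited from the structural hypothesis \eqref{association}. Since $Q_{i,j}(F,G) - Q_{i,j}(G,F) = \iint F(x)G(y)\bigl[M_{i,j}(r_i x, r_j y) - M_{i,j}(r_i y, r_j x)\bigr]\,d\sigma(x)d\sigma(y)$ after relabeling the dummy variable in $Q_{i,j}(G,F)$, it suffices to establish the pointwise identity $M_{i,j}(r_i x, r_j y) = M_{i,j}(r_i y, r_j x)$ for $(x,y)\in S^{d-1}\times S^{d-1}$.

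To obtain this, I would first verify that $\Phi_\scriptl$ is invariant under the diagonal action of $O(d)$: for $A\in O(d)$, the substitution $\bx\mapsto A^{-1}\bx$ in $\reals^{md}$ has Jacobian $|\det A|^m = 1$, and \eqref{association} converts the pre-composition of each $L_j$ with this substitution into post-composition with $A^{-1}$, yielding $\Phi_\scriptl((A(E_k))_{k\in J}) = \Phi_\scriptl(\bE)$. Applying this to the defining relation \eqref{eq:Lij} for $M_{i,j}$, with $g_k = \one_{B_k}$ for $k\notin\{i,j\}$ (which is $A$-invariant, since each $B_k$ is a ball centered at the origin), and then changing variables $x\mapsto Ax$, $y\mapsto Ay$ in the resulting double integral over $\reals^d\times\reals^d$, one deduces
\begin{equation*}
\iint \bigl[M_{i,j}(Ax,Ay) - M_{i,j}(x,y)\bigr]\one_{E_i}(x)\one_{E_j}(y)\,dx\,dy = 0
\end{equation*}
for all measurable $E_i,E_j$ of finite measure. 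Hence $M_{i,j}(Ax,Ay) = M_{i,j}(x,y)$ almost everywhere, for each $A\in O(d)$.

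Since $d\ge 2$, the $O(d)$-orbits on $\reals^d\times\reals^d$ are classified by the three invariants $|x|$, $|y|$, and $x\cdot y$: any two pairs of vectors with matching lengths and inner product can be mapped to one another by an orthogonal transformation. Consequently, $M_{i,j}(x,y)$ may be written as a function $\tilde M_{i,j}(|x|,|y|,x\cdot y)$. For $\theta,\theta'\in S^{d-1}$, both $(r_i\theta, r_j\theta')$ and $(r_i\theta', r_j\theta)$ have the same triple of invariants $(r_i, r_j, r_i r_j\,\theta\cdot\theta')$, so
\begin{equation*}
M_{i,j}(r_i\theta, r_j\theta') = \tilde M_{i,j}(r_i, r_j, r_i r_j\,\theta\cdot\theta') = M_{i,j}(r_i\theta', r_j\theta),
\end{equation*}
and the symmetry of $Q_{i,j}$ follows. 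The only substantive step is the rotational invariance of $M_{i,j}$; everything else is a change of variables and the classification of $O(d)$-orbits, so I anticipate no real obstacle.
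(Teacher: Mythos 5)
Your proof is correct and follows essentially the same route as the paper: establish the $O(d)$-invariance $M_{i,j}(Ax,Ay)=M_{i,j}(x,y)$ from the $O(d)$-invariance of $\Phi_\scriptl$ and of the balls $B_k$, then deduce the symmetric-kernel identity $M_{i,j}(r_i\theta,r_j\theta')=M_{i,j}(r_i\theta',r_j\theta)$. The only cosmetic difference is that the paper picks a concrete reflection $R$ swapping $\theta$ and $\theta'$, while you invoke the classification of $O(d)$-orbits on pairs of vectors by lengths and inner product — two ways of producing the same orthogonal transformation.
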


That is, $Q_{i,j}(F,G) = Q_{i,j}(G,F)$ for arbitrary real-valued $F,G\in \lt(S^{d-1},\sigma)$.

\begin{proof}
It suffices to show that $M_{i,j}(r_ix, r_jy)=M_{i,j}(r_iy, r_jx)$ whenever $|x|=|y|=1$. 
We claim that $M_{i,j}(Ru,Rv)=M_{i,j}(u,v)$ for any $R\in O(d)$ 
and any $u,v\in \reals^d$.
Then given $x,y\in S^{d-1}$, choose a reflection $R$ satisfying $R(x)=y$ and $R(y)=x$ 
and invoke the claim with $u=r_i y$ and $v = r_j x$ to conclude that
\[M_{i,j}(r_ix, r_jy)=M_{i,j}(R(r_i y),R(r_j x))=M_{i,j}(r_iy, r_jx) .\]

For arbitrary $f_i,f_j$,
\begin{equation}
\iint M_{i,j}(Ru,Rv)f_i(u)f_j(v)\,du\,dv
=\iint M_{i,j}(u,v)f_i(R^{-1}u)f_j(R^{-1}v)\,du\,dv.
\end{equation}
This is equal to $\scriptt_\scriptl(\bg)$,
where $\bg=(g_n:n\in J)$ is defined by $g_i(u)=f_i(R^{-1}u), g_j(v)=f_j(R^{-1}v)$, and $g_k=\one_{B_k}$ for 
every $k\in J\setminus\{i,j\}$.
By the $O(d)$-invariance of $B_k$ and $\scriptt_\scriptl$,
\begin{equation}
\scriptt_\scriptl(\bg)=\scriptt_\scriptl({\bf h})=\iint M_{i,j}(u,v)f_i(u)f_j(v)\,du\,dv,
\end{equation}
where ${\bf h}=(h_n:n\in J)$ is defined by $h_i(u)=f_i(u)$, $h_j(v)=f_j(v)$, 
and $h_k=\one_{B_k}$ for every $k\in J\setminus \{i,j\}$.
\end{proof}

Write $\bF=(F_j: j\in J)$.
Define 
\begin{equation}\label{define:Q}
Q(\bF)=\sum_{i\neq j\in J}Q_{i,j}(F_i,F_j)
\end{equation}
and
\begin{equation} \label{gammadefn}
\gamma_j = |\nabla K_j(x)| \ \text{ for $|x|=r_j$,} 
\end{equation}
recalling that 
$K_j$ is radially symmetric and that we
have shown that its gradient exists on this sphere,
under the hypothesis that $\be$ is strictly admissible together with the standing assumption that $d\ge 2$.

The goal of this subsection is to establish the following second order expansion.

\begin{proposition}\label{prop:exponbdry}
Under the hypotheses introduced at the beginning of \S\ref{subsect:boundaries},
\begin{equation}\label{eq:exponbdry}
\scriptt_\scriptl(\bE)\leq\scriptt_\scriptl(\bE^\star)-\tfrac{1}{2}\sum_{j\in J}\gamma_jr_j^{1-d}(||F_j^+||_{L^2}^2||F_j^-||_{L^2}^2)+Q(\bF)+o(\delta^2).
\end{equation}
\end{proposition}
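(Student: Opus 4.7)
The plan is to perform a multilinear expansion of $\scriptt_\scriptl(\bE) = \int \prod_j (\one_{B_j}+f_j)\circ L_j$ into $2^{|J|}$ terms grouped by their degree in $(f_j)$. The zeroth-order term is $\scriptt_\scriptl(\bEstar)$. By Notation~\ref{defn:K_j}, each first-order term equals $\langle K_j,f_j\rangle$; by Notation~\ref{Mijdefn}, each second-order term equals $\iint M_{i,j}(x,y)f_i(x)f_j(y)\,dx\,dy$. All remaining terms will be shown to be $o(\delta^2)$.

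For the first-order terms, since $\int f_j=0$, I replace $K_j$ by $K_j(x)-K_j(r_j)$. By Lemma~\ref{lemma:diff2}, $K_j$ is $C^1$ at $|x|=r_j$, and by Lemma~\ref{lemma:differentiability} the outward radial derivative there equals $-\gamma_j<0$. Hence $K_j(x)-K_j(r_j)=-\gamma_j(|x|-r_j)+o(|x|-r_j)$, and since $\support(f_j)\subset\{||x|-r_j|\leq\lambda\delta\}$ with $\|f_j\|_1=O(\delta)$, the remainder contributes $o(\delta^2)$. To evaluate the main term, pass to polar coordinates: for each $\theta\in S^{d-1}$, $f_j^+(\cdot\,\theta)$ is the indicator of a measurable set $A_\theta^+\subset[r_j,r_j+\lambda\delta]$, and the elementary inequality
\[
\int_{A}(t-r_j)\,dt\ \ge\ \tfrac12 |A|^2
\qquad\text{for $A\subset[r_j,\infty)$}
\]
(with equality iff $A$ is a left-anchored interval), together with $F_j^+(\theta)=r_j^{d-1}|A_\theta^+|(1+O(\delta))$, yields $\int(|x|-r_j)f_j^+(x)\,dx \ge \tfrac{1}{2}r_j^{1-d}\|F_j^+\|_{L^2}^2+o(\delta^2)$. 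The corresponding inequality for $f_j^-$ (with sign) produces the same contribution from $\|F_j^-\|_{L^2}^2$. Combining, $\langle K_j,f_j\rangle \le -\tfrac12\gamma_j r_j^{1-d}(\|F_j^+\|_{L^2}^2+\|F_j^-\|_{L^2}^2)+o(\delta^2)$.

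For the second-order terms, write each double integral in polar coordinates and approximate $M_{i,j}(x,y)\approx M_{i,j}(r_i\theta,r_j\phi)$ on the support of $f_i\otimes f_j$. On the "good" piece of the support of $M_{i,j}$ (where Lemma~\ref{lemma:Holder2} furnishes Lipschitz continuity) the approximation error is of order $\delta$ per point, and integrated against $|f_i\otimes f_j|$ (which has $L^1$ norm $O(\delta^2)$) yields an $o(\delta^2)$ contribution. The boundary region of Lemma~\ref{lemma:Holder2}, of $\sigma\times\sigma$ measure $O(\delta)$, is handled by trivial estimation together with the thin-annulus support property, again giving $o(\delta^2)$. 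After this replacement, the double integral becomes $\iint M_{i,j}(r_i\theta,r_j\phi)F_i(\theta)F_j(\phi)\,d\sigma(\theta)d\sigma(\phi) = Q_{i,j}(F_i,F_j)$, and summing over ordered pairs $i\ne j$ produces $Q(\bF)$.

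The main obstacle is bounding the terms of multilinear degree $\ge 3$ by $o(\delta^2)$. The naive bound from \eqref{trivialHBL} with $m$ factors placed in $L^1$ yields only $O(\delta^m)$, which is sufficient when $m\ge 3$ but only matches, without improving on, $\delta^2$ when $m=2$. The remedy is to exploit that each $|f_j|$ is the indicator of a set contained in a thin spherical shell of radial thickness $O(\delta)$: after an affine change of variables putting two of the $L_i$'s into coordinate form and invoking the Brascamp--Lieb--Luttinger inequality to rearrange remaining factors to balls centered at the origin, an extra power of $\delta$ is extracted from the trapped radial direction of a third factor. This yields a bound of $o(\delta^2)$ for every term of degree $\ge 3$, completing the expansion. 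The details of this thin-annulus argument are those of the analogous step in \cite{christRSult} and are inherited without essential modification.
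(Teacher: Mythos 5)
Your decomposition is identical to the paper's (Lemmas~\ref{lemma:1storder}, \ref{lemma:2ndorder}, \ref{lemma:3rdorder}), and your treatments of the zeroth-, first-, and second-order terms are substantively correct: the paper proves the first- and second-order lemmas by reference to \cite{christRSult} and \cite{christBLL}, and the concrete argument you supply (the elementary $\int_A(t-r_j)\,dt\ge\tfrac12|A|^2$ inequality combined with $C^1$ regularity of $K_j$ at $|x|=r_j$, and the Lipschitz/thin-annulus treatment of $M_{i,j}$ via Lemma~\ref{lemma:Holder2}) matches the intended mechanism.

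However, your discussion of terms of degree $\ge 3$ contains a gap. You assert that the naive $L^1$--$L^\infty$ bound from \eqref{trivialHBL} ``is sufficient when $m\ge 3$,'' but this is false as stated: having $m\ge 3$ does not guarantee that three of the relevant mappings $L_i,L_j,L_k$ among the $f$-factors are linearly independent (e.g.\ $L_k=L_i+L_j$ can occur when $m\ge 3$). When $\{L_i,L_j,L_k\}$ is linearly dependent, the coordinate change of \eqref{trivialHBL} can place at most two of those three $f$'s in $L^1$, yielding only $O(\delta^2)$, so the thin-annulus mechanism is needed precisely as often when $m\ge 3$ as when $m=2$. The paper's Lemma~\ref{lemma:3rdorder} handles this by case distinction: the independent case is handled by the naive bound, while the dependent case reduces to $\iint|f_i(x)|\,|f_j(y)|\,|f_k(ax+by)|\,dx\,dy$ and then invokes Lemma~\ref{lemma:smallmeasure}, the angular measure estimate, to show that the set of angle-pairs $(\theta_i,\theta_j)$ for which $ax+by$ lands in the thin annular support of $f_k$ has $\sigma\times\sigma$ measure $O(\delta)$; combined with the two $O(\delta)$ radial integrals, this gives $O(\delta^3)$. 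Your ``remedy'' paragraph correctly identifies the thin-annulus heuristic, but the mechanism you cite---a Brascamp--Lieb--Luttinger rearrangement of the remaining factors---is not what makes this work; the essential input is Lemma~\ref{lemma:smallmeasure}, which is established in this paper (not in \cite{christRSult}), so the final deferral to that reference is not adequate either.
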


To prove Proposition \ref{prop:exponbdry}, we 
substitute $\one_{E_j} = \one_{B_j}+f_j$ and expand using the multilinearity of $\scriptt_\scriptl$.
The zeroth order term is equal to $\scriptt_\scriptl(\bE^\star)$. The following three lemmas will address 
first, second, and higher order terms, in that order.

\begin{lemma}\label{lemma:1storder}
For each $j\in J$,
\begin{equation}\label{eq:1storder}
\langle K_j,f_j\rangle\leq -\tfrac{1}{2}\gamma_jr_j^{1-d}(||F_j^+||_{L^2}^2 + ||F_j^-||_{L^2}^2)+o(\delta^2).
\end{equation}
\end{lemma}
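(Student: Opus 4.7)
\medskip

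\noindent\textbf{Plan for the proof of Lemma~\ref{lemma:1storder}.}

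The plan is to pass to polar coordinates, exploit the $C^1$ regularity of $K_j$ across the sphere $|x|=r_j$ proved in Lemma~\ref{lemma:diff2}, and use a one-dimensional rearrangement step on each radial half-line.

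First I would use $\int f_j=0$ to rewrite
\begin{equation*}
\langle K_j,f_j\rangle = \langle K_j-K_j(r_j),\,f_j^+\rangle - \langle K_j-K_j(r_j),\,f_j^-\rangle,
\end{equation*}
and then pass to polar coordinates $x=t\theta$, using that $K_j$ is radial. For each $\theta$, let $A_\theta^+=\{t>r_j:t\theta\in E_j\setminus B_j\}$ and $A_\theta^-=\{t<r_j:t\theta\in B_j\setminus E_j\}$, both contained in intervals of length $\lambda\delta$ adjacent to $r_j$, by the boundary-localization hypothesis of \S\ref{subsect:boundaries}. Next, by Lemma~\ref{lemma:diff2}, the radial derivative of $K_j$ exists and equals $-\gamma_j$ at $t=r_j$, uniformly in $\theta$, so
\begin{equation*}
K_j(t\theta)-K_j(r_j) = -\gamma_j(t-r_j)+\eta(t),\qquad |\eta(t)|\le \epsilon(\delta)\,|t-r_j|,
\end{equation*}
with $\epsilon(\delta)\to 0$ as $\delta\to 0$; by Lemma~\ref{lemma:differentiability} strict admissibility gives $\gamma_j>0$.

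The key inequality is the elementary fact that for any measurable $A\subset[r_j,\infty)$,
\begin{equation*}
\int_A (t-r_j)\,dt \;\ge\; \tfrac12 |A|^2,
\end{equation*}
because the minimum of the left side over subsets of prescribed measure is attained by the interval $[r_j,r_j+|A|]$. Applying this with $A=A_\theta^+$, and using $t^{d-1}=r_j^{d-1}(1+O(\lambda\delta))$ on the support, I obtain
\begin{equation*}
-\gamma_j\!\int_{A_\theta^+}(t-r_j)\,t^{d-1}dt \;\le\; -\tfrac{\gamma_j}{2}\,r_j^{d-1}|A_\theta^+|^2+O(\lambda\delta)\cdot r_j^{d-1}|A_\theta^+|^2.
\end{equation*}
The relation $F_j^+(\theta)=\int_{A_\theta^+}t^{d-1}dt = r_j^{d-1}|A_\theta^+|(1+O(\lambda\delta))$ then yields $r_j^{d-1}|A_\theta^+|^2 = r_j^{1-d}F_j^+(\theta)^2(1+O(\lambda\delta))$, and integrating over $\theta$ gives
\begin{equation*}
\langle K_j-K_j(r_j),f_j^+\rangle \;\le\; -\tfrac{\gamma_j}{2}r_j^{1-d}\|F_j^+\|_{L^2(\sigma)}^2 + o(\delta^2).
\end{equation*}
The identical argument on the inner side, where now $K_j(t\theta)-K_j(r_j)=\gamma_j(r_j-t)+\eta(t)\ge 0$ and $\int_{A_\theta^-}(r_j-t)\,dt\ge \tfrac12|A_\theta^-|^2$, produces
\begin{equation*}
\langle K_j-K_j(r_j),f_j^-\rangle \;\ge\; \tfrac{\gamma_j}{2}r_j^{1-d}\|F_j^-\|_{L^2(\sigma)}^2 - o(\delta^2).
\end{equation*}
Subtracting yields \eqref{eq:1storder}.

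For the error bookkeeping: each $O(\lambda\delta)$ multiplies a term of size $\|F_j^\pm\|_{L^2}^2=O(\delta^2)$, giving $O(\delta^3)=o(\delta^2)$; and the contribution from $\eta$ is bounded by $\epsilon(\delta)\int|t-r_j|t^{d-1}\cdot(f_j^++f_j^-)\le \epsilon(\delta)\cdot \lambda\delta\cdot|E_j\Delta B_j|=o(\delta^2)$. The main point to be careful about is that the $o(1)$ in the expansion of $K_j$ must be uniform in the angular variable $\theta$; this is exactly what continuous (as opposed to merely one-sided) differentiability of $K_j$ at the sphere, asserted by Lemma~\ref{lemma:diff2}, supplies. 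I expect this uniformity to be the only subtle point; everything else is standard Taylor expansion and the one-dimensional rearrangement observation.
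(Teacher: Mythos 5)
Your proof is correct, and it is essentially the argument the paper has in mind: the paper omits the details here, stating only that the proof is "essentially identical to the proofs of the corresponding lemmas in \cite{christRSult},\cite{christBLL}," with a remainder $o(\delta^2)$ rather than $O(\delta^3)$ because $\nabla K_j$ is merely continuous, not Lipschitz. Your write-up supplies exactly those details — the split $\langle K_j, f_j\rangle = \langle K_j - K_j(r_j), f_j^+\rangle - \langle K_j - K_j(r_j), f_j^-\rangle$, the polar-coordinate decomposition, the bathtub-principle lower bound $\int_A(t-r_j)\,dt \ge \tfrac12|A|^2$, the $t^{d-1} = r_j^{d-1}(1+O(\lambda\delta))$ expansion relating $|A_\theta^\pm|$ to $F_j^\pm(\theta)$, and the crucial observation that continuity (not Lipschitz continuity) of $K_j'$ at $|x|=r_j$ only delivers $\epsilon(\delta)\to 0$ and hence $o(\delta^2)$ — and your error bookkeeping is sound, with the $O(\lambda\delta)$ relative errors multiplying $\|F_j^\pm\|^2 = O(\delta^2)$ to give $O(\delta^3)$ and the $\eta$-remainder contributing $\epsilon(\delta)\lambda\delta\cdot|E_j\Delta B_j| = o(\delta^2)$.
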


The proof is essentially identical  to the proofs of the corresponding
lemmas in \cite{christRSult},\cite{christBLL}, so the details are omitted.
Whereas a remainder term $O(\delta^3)$ was obtained in those sources,
here only a weaker bound $o(\delta^2)$ results,
because here the derivative of $K_j$ is merely known to be continuous,
while there it was Lipschitz.
\qed

\begin{lemma}\label{lemma:2ndorder}
Let $i\ne j\in J$.
Let $F_i,F_j\in\lt(S^{d-1})$ be the functions associated to $f_i,f_j$, respectively, by \eqref{defn:bigF}.
Then
\begin{equation}\label{eq:2ndorder}
\iint M_{i,j}f_if_j = Q_{i,j}(F_i,F_j)+O(\delta^{3}).
\end{equation}
\end{lemma}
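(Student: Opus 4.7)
\textbf{Proof plan for Lemma~\ref{lemma:2ndorder}.}
Passing to polar coordinates $x=s\theta$, $y=t\phi$ with $s,t>0$ and $\theta,\phi\in S^{d-1}$, the left side of \eqref{eq:2ndorder} becomes
\begin{equation*}
\iint_{S^{d-1}\times S^{d-1}}\!\!\iint_{\reals^+\times\reals^+} M_{i,j}(s\theta,t\phi)\,f_i(s\theta)f_j(t\phi)\,s^{d-1}t^{d-1}\,ds\,dt\,d\sigma(\theta)\,d\sigma(\phi).
\end{equation*}
By the reduction of \S\ref{subsect:dagger}, on the support of the integrand one has $|s-r_i|\le\lambda\delta$ and $|t-r_j|\le\lambda\delta$. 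Were we permitted to replace $M_{i,j}(s\theta,t\phi)$ by $M_{i,j}(r_i\theta,r_j\phi)$ and $s^{d-1}t^{d-1}$ by $r_i^{d-1}r_j^{d-1}$ at no cost, the inner integral would factor as $M_{i,j}(r_i\theta,r_j\phi)F_i(\theta)F_j(\phi)$ by the definition \eqref{define:bigFpm}--\eqref{defn:bigF}, recovering $Q_{i,j}(F_i,F_j)$ exactly. The task is therefore to control the error introduced by this substitution.

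The plan is to split $S^{d-1}\times S^{d-1}$ into a \emph{good} subset $G$ and a \emph{bad} complement $B$ determined by Lemma~\ref{lemma:Holder2}. Write $M_{i,j}=\varphi\cdot\one_{\Omega(i,j)}$ with $\varphi$ Lipschitz, and let $B$ consist of those $(\theta,\phi)$ for which $(r_i\theta,r_j\phi)$ lies within distance $C\lambda\delta$ of $\partial\Omega(i,j)$; by the lemma, $(\sigma\times\sigma)(B)=O(\delta)$. On $G$, for every admissible $(s,t)$ the points $(s\theta,t\phi)$ and $(r_i\theta,r_j\phi)$ lie on the same side of $\partial\Omega(i,j)$, so $|M_{i,j}(s\theta,t\phi)-M_{i,j}(r_i\theta,r_j\phi)|\le C\lambda\delta$, and similarly $|s^{d-1}t^{d-1}-r_i^{d-1}r_j^{d-1}|\le C\lambda\delta$. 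Thus the contribution of $G$ to the error is bounded by
\begin{equation*}
C\lambda\delta\cdot\|f_i\|_{L^1}\|f_j\|_{L^1}\le C\lambda\delta\cdot(2\delta)(2\delta)=O(\delta^3).
\end{equation*}

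On $B$ the jumps in $M_{i,j}$ across $\partial\Omega(i,j)$ prevent a Lipschitz bound, so one must argue differently. Since $M_{i,j}$ is bounded and the supports of $f_i(\cdot\theta)$, $f_j(\cdot\phi)$ are confined to radial intervals of length $O(\lambda\delta)$, one has the pointwise bounds
\begin{equation*}
\int_{\reals^+}|f_i(s\theta)|s^{d-1}\,ds\le C\delta,\qquad \int_{\reals^+}|f_j(t\phi)|t^{d-1}\,dt\le C\delta,
\end{equation*}
uniformly in $\theta,\phi$. Hence the bad-region contribution is at most $(\sigma\times\sigma)(B)\cdot\|M_{i,j}\|_\infty\cdot(C\delta)(C\delta)=O(\delta)\cdot O(\delta^2)=O(\delta^3)$. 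Adding the two estimates gives $\iint M_{i,j}f_if_j=Q_{i,j}(F_i,F_j)+O(\delta^3)$.

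The only mildly subtle point is the treatment of the bad set $B$ when $m=2$, where $M_{i,j}$ really is discontinuous and the Lipschitz-plus-indicator structure supplied by Lemma~\ref{lemma:Holder2} is indispensable; the quantitative smallness of the $C\lambda\delta$-neighborhood of $\partial\Omega(i,j)$ provided there is exactly what upgrades the naive $O(\delta^2)$ bound on the bad region to $O(\delta^3)$.
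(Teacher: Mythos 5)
Your approach — polar coordinates, split $S^{d-1}\times S^{d-1}$ into a good set and a bad set using the Lipschitz-times-indicator structure of Lemma~\ref{lemma:Holder2}, and estimate each contribution separately — is exactly what the paper's one-line proof has in mind, and your quantitative bounds (Lipschitz gain of $O(\lambda\delta)$ on the good set, measure $O(\delta)$ of the bad set against two radial integrals each $O(\delta)$) are correct.

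However, there is a slip in the setup. You propose replacing both $M_{i,j}(s\theta,t\phi)$ by $M_{i,j}(r_i\theta,r_j\phi)$ \emph{and} the Jacobian factor $s^{d-1}t^{d-1}$ by $r_i^{d-1}r_j^{d-1}$, and claim the result is $M_{i,j}(r_i\theta,r_j\phi)F_i(\theta)F_j(\phi)$. That is not what you get: by \eqref{define:bigFpm}--\eqref{defn:bigF} the weight $t^{d-1}$ is already built into $F_j$, so after the double substitution the inner integrals produce $\int f_i(s\theta)\,ds$ and $\int f_j(t\phi)\,dt$ rather than $F_i(\theta)$ and $F_j(\phi)$. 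The correct move is to freeze \emph{only} the kernel, replacing $M_{i,j}(s\theta,t\phi)$ by $M_{i,j}(r_i\theta,r_j\phi)$ and leaving $s^{d-1}t^{d-1}$ in place; that single substitution already yields $Q_{i,j}(F_i,F_j)$ exactly. Your error estimates on the good and bad sets bound this single substitution just as well as the double one (they did not actually use any cancellation from the Jacobian replacement), so once the redundant substitution is deleted the proof is sound.
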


\begin{proof}
This follows from Lemma~\ref{lemma:Holder2},
by the same reasoning as shown for $m=2$ in the corresponding lemma in \cite{christBLL}.
\end{proof}

\begin{lemma}\label{lemma:3rdorder}
Let $\bg=(g_n:n\in J)$ be a $J$-tuple of functions  such that for each $j\in J$, 
either $g_j=f_j$ or $g_j=\one_{B_j}$. Suppose that $g_j=f_j$ for at least three distinct indices $j$. Then,
\begin{equation}\label{eq:3rdorder}
\Phi(\bg)=O(\delta^3).
\end{equation}
\end{lemma}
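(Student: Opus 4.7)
My plan is to bound $|\Phi(\bg)|$ by $\int\prod_j |g_j|(L_j\bx)\,d\bx$ and to extract three factors of order $\delta$ through a judicious application of the multilinear inequality \eqref{trivialHBL}, the three $O(\delta)$-factors corresponding to three distinct indices $i_1,i_2,i_3$ in $S := \{j\in J: g_j=f_j\}$. The case analysis hinges on the linear-algebraic status of $\{L_{i_1}^1,L_{i_2}^1,L_{i_3}^1\}$ within $(\reals^m)^*$.

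When three indices in $S$ can be chosen so that $L_{i_1}^1,L_{i_2}^1,L_{i_3}^1$ are linearly independent (which forces $m\ge 3$), nondegeneracy condition~(iii) implies that $\{L_j^1:j\in J\}$ spans $(\reals^m)^*$, so $\{i_1,i_2,i_3\}$ extends to some $I'\subset J$ with $|I'|=m$ and $\{L_i^1:i\in I'\}$ a basis. Applying \eqref{trivialHBL} with this $I'$ yields
\[
|\Phi(\bg)| \le C\prod_{i\in I'}\|g_i\|_1\prod_{j\in J\setminus I'}\|g_j\|_\infty = O(\delta^3),
\]
since three of the $I'$-factors are $\|f_{i_k}\|_1=O(\delta)$ while all remaining factors are $O(1)$.

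The main obstacle is the complementary case, in which every triple chosen from $S$ fails to be linearly independent (this is forced when $m=2$, and can also occur when $m\ge 3$). Here any direct use of \eqref{trivialHBL} can place at most two of the $i_k$ in $I'$ and yields only $O(\delta^2)$. My plan is to change variables on $\reals^m$, extended diagonally to $\reals^{md}$, so that $L_{i_1}^d(\bx)=x_1$, $L_{i_2}^d(\bx)=x_2$, and $L_{i_3}^d(\bx)=\alpha x_1+\gamma x_2$ with both $\alpha,\gamma\ne 0$ (nonzero by nondegeneracy~(ii)). One then integrates out $x_3,\dots,x_m$ by applying \eqref{trivialHBL} only to the inner integral over $\reals^{(m-2)d}$, using some $J'\subset J\setminus\{i_1,i_2,i_3\}$ with $|J'|=m-2$. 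The requisite span property for $\{\tilde L_j^1:j\in J'\}$ follows from nondegeneracy~(iii) with excluded index $i_3$: because $L_{i_3}^1\in\operatorname{span}(L_{i_1}^1,L_{i_2}^1)$, the restricted functionals $\{L_j^1|_{\{x_1=x_2=0\}}:j\ne i_1,i_2,i_3\}$ span $(\reals^{m-2})^*$. After bounding the inner integral uniformly in $(x_1,x_2)$ by an $O(1)$ constant, matters reduce to estimating
\[
\int_{\reals^d\times\reals^d}\one_{A_{i_1}}(x_1)\,\one_{A_{i_2}}(x_2)\,\one_{A_{i_3}}(\alpha x_1+\gamma x_2)\,dx_1\,dx_2,
\]
where $A_j:=\{x\in\reals^d:\big|\,|x|-r_j\,\big|\le\lambda\delta\}$ is the thin shell containing $E_j\symdif B_j$.

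Completing the proof rests on a geometric transversality estimate for this triple-shell integral. For each fixed $x_1\in A_{i_1}$, the $x_2$-slice is the intersection of $A_{i_2}$ (a shell about $|x_2|=r_{i_2}$) with the shifted shell of radius $r_{i_3}/|\gamma|$ centered at $-\alpha x_1/\gamma$. The strict-admissibility inequalities $\big|\,|\alpha|r_{i_1}-|\gamma|r_{i_2}\,\big|<r_{i_3}<|\alpha|r_{i_1}+|\gamma|r_{i_2}$, established inside the proof of Lemma~\ref{lemma:smallmeasure}, ensure that the two underlying spheres meet transversally and never become tangent, uniformly over the direction of $x_1$. A parameterization of $x_2$ along and perpendicular to $x_1$ then shows each slice has volume $O(\delta^2)$; integrating over $A_{i_1}$, of measure $O(\delta)$, delivers the required bound $O(\delta^3)$. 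Identifying this non-tangency as a direct consequence of strict admissibility is the crux of the argument.
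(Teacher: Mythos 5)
Your proof is correct, and its mathematical content matches the paper's closely. Case~1 coincides with the paper's argument: both extend a linearly independent triple to a spanning set $I'\subset J$ with $|I'|=m$ and invoke \eqref{trivialHBL} (the paper writes this out as pulling an $L^\infty$ norm out of an $(m-3)$-fold integral, which is the same estimate), yielding three $L^1$-factors of size $O(\delta)$. Your handling of the span property needed for the inner integral in Case~2 -- using nondegeneracy~(iii) with excluded index $i_3$ and the observation that $L_{i_3}\in\operatorname{span}(L_{i_1},L_{i_2})$ contributes no information about the restricted kernels -- is a correct and slightly more explicit account of what the paper leaves implicit behind ``pulling out the $L^\infty$ norm of a factor analogous to $F$.''

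Where you genuinely diverge is the final estimate of the triple-shell integral in Case~2. The paper passes to polar coordinates in both $x$ and $y$, fixes the two radial variables (each contributing $O(\delta)$), and cites Lemma~\ref{lemma:smallmeasure} to bound the $\sigma\times\sigma$-measure of the ``bad'' angular set by $O(\delta)$. You instead slice by $x_1\in A_{i_1}$ (Cartesian, measure $O(\delta)$), show the $x_2$-slice is an intersection of two transversal thin shells of thickness $O(\delta)$, hence of volume $O(\delta^2)$, and integrate. Both decompositions ultimately rest on the same two strict-admissibility inequalities $\bigl|\,|\alpha|r_{i_1}-|\gamma|r_{i_2}\,\bigr|<r_{i_3}<|\alpha|r_{i_1}+|\gamma|r_{i_2}$ established in the proof of Lemma~\ref{lemma:smallmeasure}; you re-derive the geometric consequence inline rather than cite the lemma. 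The paper's version is more modular (and reuses a lemma it already needs elsewhere, e.g.\ in Lemma~\ref{lemma:Holder2}); yours is self-contained and perhaps more geometrically transparent about why the shells intersect in the right way.
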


\begin{proof}
Let $i,j,k$ be three distinct indices 
such that $g_i=f_i, g_j=f_j, g_k=f_k$. 
Consider first the case in which $\{L_i,L_j,L_k\}$ is linearly independent. 
Necessarily, then, $m\ge 3$.
After a change of variables, we may write
\begin{equation}
\Phi(\bg)=\iiint_{(\reals^d)^3}f_i(x)f_j(y)f_k(z)F(x,y,z)\,dx\,dy\,dz,
\end{equation}
where $F(x,y,z)$ has finite $L^\infty$ norm majorized by a function of  $\scriptl$, $\be$, 
$d$, and $\{i,j,k\}$; $F(x,y,z)$ is obtained by integrating $\prod_{n\in J\setminus\{i,j,k\}} L_n^d(\bx)$
over a suitable translate of $(\reals^d)^{m-3}$. 
Majorizing $F$ by its $L^\infty$ norm, the integral becomes a simple product and it follows that
\begin{equation}
\Phi(\bg)\leq||f_i||_{L^1}||f_j||_{L^1}||f_k||_{L^1}||F||_{L^\infty}=O(\delta^3).
\end{equation}

Suppose instead that $\{L_i,L_j,L_k\}$ is linearly dependent.
For any two distinct indices $n,n'$, $L_n^1,L_{n'}^1$ 
are linearly independent by the nondegeneracy hypothesis. From the structural hypothesis
it follows that $L_n^d,L_{n'}^d$ are likewise linearly independent.
Thus $\{L_i,L_j,L_k\}$ spans a two-dimensional space. 
After a change of variables and after pulling out the $L^\infty$ norm
of a factor analogous to the function $F$ in the preceding paragraph,
\begin{equation}\label{eq:useRS}
|\Phi(\bg)| \le C \iint_{(\reals^d)^2}|f_i(x)|\,|f_j(y)|\,|f_k(ax+by)|\,dx\,dy ,
\end{equation}
where $a,b\neq 0$, and  $C$
is majorized by a finite quantity depending only on $\scriptl$, $\be$, $d$, and $\{i,j,k\}$. 

It is no longer true that this integral is majorized  by a constant multiple of the product of the $L^1(\reals^d)$ 
norms of the three factors of the integrand for arbitrary functions $f_n$. Nonetheless,
the integral is $O(\delta^3)$.
Indeed, each factor $f_n$ satisfies $\norm{f_n}_{L^\infty} \le 1$,
and each is supported on an annular subset of $\reals^d$ centered at $0$, 
having inner and outer radii $r_n\pm O(\delta)$.
Introduce polar coordinates $x=(\rho_i,\theta_i)$ and $y=(\rho_j,\theta_j)$.
Fix any $\rho_i,\rho_j$ with distance $C\delta$ of $r_i,r_j$, respectively.
By Lemma~\ref{lemma:smallmeasure},
the set of ordered pairs $(\theta_i,\theta_j)$ for which $\big|\,|ax+by|-r_k\,\big|\le C\delta$
has $\sigma\times\sigma$ measure $O(\delta)$.
\end{proof}

\section{Spectral problem and balancing}

Write $\bF=(F_j: j\in J)$ and $\pi_n(\bF) = (\pi_n(F_j): j\in J)$.
We say that $\bF\in L^2$ if each component $F_j$ belongs to $L^2(S^{d-1},\sigma)$.

In light of Proposition \ref{prop:exponbdry}, we consider the optimal constant $A$ in the inequality
\begin{equation}\label{eq:introA}
Q(\bF)\leq A\sum_{j\in J}\gamma_j r_j^{1-d}\norm{F_j}_{L^2}^2
\ \ \text{for every } \bF\in L^2(S^{d-1})
\text{ satisfying $\pi_0(\bF)=0$.}
\end{equation}
If it were known that $A<\frac{1}{2}$, then combining \eqref{eq:reducetoL2norms} with the inequality
\begin{equation}
||F_j||_{L^2}^2=||F_j^+||_{L^2}^2+||F_j^-||_{L^2}^2-2\langle F_j^+,F_j^-\rangle\leq||F_j^+||_{L^2}^2+||F_j^-||_{L^2}^2
\end{equation}
and Proposition~\ref{prop:exponbdry} would yield the conclusion of Theorem~\ref{thm:stability}.
However, 
this optimal constant cannot be strictly less than $\tfrac12$.
Indeed, this strict inequality together with the machinery developed above would imply that
all maximizers are tuples of balls centered at the origin, contradicting the
existence of a large family of symmetries of the inequality.
The assumption that $\max_j|E_j\Delta E_j^\star|$ is comparable to $\dist(\bE,\scripto(\bE^\star))$
must be exploited in order to compensate for these symmetries.

We recast \eqref{eq:introA}
in terms of spherical harmonics. For $\nu\geq0$, let $\scripth_\nu \subset L^2(S^{d-1})$ 
denote the subspace of spherical harmonics of degree $\nu$. 
$L^2(S^{d-1})$ is naturally identified with $\oplus_{\nu=0}^\infty\scripth_\nu$. 
Denote by $\pi_\nu$ the orthogonal projection from $L^2(S^{d-1})$ onto $\scripth_\nu$.

$Q$ commutes with rotations, a consequence of the symmetry hypothesis \eqref{association}.
By Lemma \ref{lemma:Qijsymmetric}, 
for each $i\neq j\in J$, $Q_{i,j}$ takes the form
$Q_{i,j}(F,G)\equiv\langle T_{i,j}(F),G\rangle$, 
for a certain compact, selfadjoint linear operator $T_{i,j}$ on $L^2(S^{d-1})$.
Moreover, $T_{i,j}$ maps $\scripth_\nu$ to itself for every $\nu\in\naturals$, 
and $T_{i,j}$ agrees with a scalar multiple $\lambda(\nu,\be)$ of the identity operator on $\scripth_\nu$. 
Therefore
\begin{equation} Q(\bF)=\sum_{\nu=1}^\infty Q(\pi_\nu(\bF)).  \end{equation}
The summation begins at $\nu=1$ since $\int f_j=0$ implies $\int_{S^{d-1}}F_j\,d\sigma=0$, 
whence $\pi_0(\bF)=0$. 

The next lemma is an immediate consequence of the compactness and selfadjointness
of the operators $T_{i,j}$. 

\begin{lemma}\label{lemma:Tkcompactness}
Let $(\scriptl,\be)$ be strictly admissible.
For each $d\geq2$, there exists a sequence $\Lambda_\nu=\Lambda_\nu(\scriptl,\be)$ 
satisfying $\lim_{\nu\rightarrow\infty}\Lambda_\nu=0$ such that for each $\nu\geq1$ 
and all $\bG\in \scripth_\nu^J$,
\begin{equation}
|Q(\bG)|\leq\Lambda_\nu\sum_j||G_j||_{L^2(S^{d-1})}^2.
\end{equation}
\end{lemma}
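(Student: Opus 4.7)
The plan is to exploit the fact, already recorded just above the lemma, that each $T_{i,j}$ maps the finite-dimensional space $\scripth_\nu$ into itself and acts there as a scalar multiple $\lambda_{i,j}(\nu)$ of the identity. With the spectral picture in hand, the compactness of $T_{i,j}$ forces $\lambda_{i,j}(\nu)\to 0$ as $\nu\to\infty$, and the rest is bookkeeping.

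In more detail, I would first record that
\[
Q(\bG)=\sum_{i\ne j\in J}\langle T_{i,j}G_i,G_j\rangle_{L^2(S^{d-1},\sigma)}.
\]
For $\bG\in\scripth_\nu^J$ this becomes $Q(\bG)=\sum_{i\ne j}\lambda_{i,j}(\nu)\langle G_i,G_j\rangle$. Next I would argue that $\lambda_{i,j}(\nu)\to 0$ as $\nu\to\infty$ for each fixed pair $i\ne j$. Indeed, $\scripth_\nu$ is an eigenspace of the compact selfadjoint operator $T_{i,j}$ corresponding to the eigenvalue $\lambda_{i,j}(\nu)$; since $\dim\scripth_\nu\to\infty$ with $\nu$, if some subsequence $\lambda_{i,j}(\nu_k)$ were bounded away from $0$, one could produce an orthonormal sequence of eigenvectors of $T_{i,j}$ with eigenvalues of absolute value $\ge c>0$, contradicting compactness (the image of the unit ball would contain an infinite orthonormal family, hence would not be totally bounded).

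Granting $\lambda_{i,j}(\nu)\to 0$, set
\[
\Lambda_\nu \;=\; |J|\cdot\max_{i\ne j\in J}|\lambda_{i,j}(\nu)|,
\]
so $\Lambda_\nu\to 0$. By Cauchy--Schwarz and the AM--GM inequality,
\[
|Q(\bG)|\le \sum_{i\ne j}|\lambda_{i,j}(\nu)|\,\|G_i\|_{L^2}\|G_j\|_{L^2}
\le \tfrac12\sum_{i\ne j}|\lambda_{i,j}(\nu)|(\|G_i\|_{L^2}^2+\|G_j\|_{L^2}^2)
\le \Lambda_\nu\sum_{j\in J}\|G_j\|_{L^2}^2,
\]
which is the claimed bound.

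The only point requiring any thought is the passage from compactness of each $T_{i,j}$ to $\lambda_{i,j}(\nu)\to 0$; everything else is routine Cauchy--Schwarz. Since there are finitely many pairs $(i,j)$, a single sequence $\Lambda_\nu$ dominates them all simultaneously, and no uniformity in $(\scriptl,\be)$ is being asserted here, so I do not need to track how $\Lambda_\nu$ depends on those parameters.
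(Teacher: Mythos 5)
Your proof is correct and spells out exactly the argument the paper invokes when it calls the lemma ``an immediate consequence of the compactness and selfadjointness of the operators $T_{i,j}$'': scalar action on $\scripth_\nu$, $\lambda_{i,j}(\nu)\to 0$ from compactness via the orthonormal-family argument, then Cauchy--Schwarz and AM--GM. No discrepancy with the paper's approach.
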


As in \cite{christRSult}, we are not able to compute the eigenvalues of 
the operators $T_{i,j}$, which depend on the parameters $\scriptl,\br,d,\nu$. 
They can be computed for the Riesz-Sobolev inequality
for $d=2$. That computation reveals that they lack useful monotonicity properties with
respect to the degree $\nu$.
Therefore we are led to develop an indirect analysis, following \cite{christRSult}, 
that works more holistically
with $Q(\bF)-\tfrac12 \sum_j \gamma_j r_j^{1-d} \norm{F_j}_{\lt}^2$.

As in related analyses, it will be essential to eliminate certain spherical harmonic components.

\begin{lemma}\label{lemma:balancing}
Let $d,m\geq2$. Let $\scriptl$ be nondegenerate, and let $(\scriptl,\be)$ be strictly admissible.
Let $\bE$ be as above, with $\delta=\dist(\bE,\scripto(\bE^\star))$. 
Let $J'\subset J$ have cardinality $m$,
and suppose that $\{L_j: j\in J'\}$ is linearly independent.
Let $n\in J'$.
There exist $\bv\in(\reals^d)^m$ and an invertible, measure preserving linear transformation 
$\psi$ of $\reals^d$ such that
\newline
(i) $|\bv|=O(\delta)$ and $||\psi-I||=O(\delta)$.
\newline
(ii) The functions $\tilde{F_j}$ associated to the sets $\tilde{E_j}=\psi(E_j)+L_j(\bv)$ satisfy
\begin{equation}
\left\{\begin{aligned}
\pi_1(\tilde{F_j})& =0,\  \text{for all $j\in J'$}
\\
\pi_2(\tilde{F_n})& =0.
\end{aligned}\right.
\end{equation}
\end{lemma}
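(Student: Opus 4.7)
The plan is to express the two balancing conditions as the vanishing of a single map $\Phi$ from symmetry-group parameters into $\bigoplus_{j\in J'}\scripth_1\oplus\scripth_2$, to compute its linearization at the reference tuple $\bE^\star$, and to invoke a quantitative inverse function theorem (or contraction mapping) to solve $\Phi=0$ within distance $O(\delta)$ of the identity. I would first replace $\bE$ by a closest representative of its orbit $\scripto(\bE^\star)$, reducing to $\max_j|E_j\symdif B_j|\le 2\delta$. Then I would parametrize the symmetry group near the identity by $(\bv,M)\in\reals^{md}\times\mathfrak{sl}(d)_{\mathrm{sym}}$ with $\psi=e^M$, where $\mathfrak{sl}(d)_{\mathrm{sym}}$ denotes the traceless symmetric $d\times d$ matrices; the antisymmetric part of $\mathfrak{sl}(d)$ is omitted because it acts trivially on balls at first order. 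The parameter-space dimension $md+\tfrac{(d-1)(d+2)}{2}$ matches the target dimension $m\cdot d+\dim\scripth_2$.

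The core computation is the linearization of
\begin{equation*}
\Phi(\bv,M)=\bigl((\pi_1(\tilde F_j))_{j\in J'},\,\pi_2(\tilde F_n)\bigr),
\qquad \tilde E_j=e^M(E_j)+L_j(\bv),
\end{equation*}
at $(0,0)$, carried out first for the reference $\bE=\bE^\star$. Expanding $\one_{e^M(B_j)+L_j(\bv)}$ to first order and integrating radially produces
\begin{equation*}
\tilde F_j(\theta)=r_j^{d-1}\langle L_j(\bv),\theta\rangle+r_j^d\langle M\theta,\theta\rangle+O\bigl((|\bv|+\|M\|)^2\bigr).
\end{equation*}
The first summand lies in $\scripth_1$ and the second, since $M$ is symmetric traceless, in $\scripth_2$, so the reference linearization is block-diagonal. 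The $\bv$-block is proportional to $\bv\mapsto(L_j(\bv))_{j\in J'}$, which is a linear isomorphism $(\reals^d)^m\to(\reals^d)^m$: linear independence of $\{L_j^1:j\in J'\}$ forces the $m\times m$ coefficient matrix to be invertible, and the structural hypothesis \eqref{eq:structure} makes $\bv\mapsto(L_j^d(\bv))_{j\in J'}$ that same matrix acting componentwise on $(\reals^d)^m$. The $M$-block is proportional to the classical isomorphism $M\mapsto\langle M\theta,\theta\rangle$ from symmetric traceless matrices onto $\scripth_2$. Hence the reference linearization is invertible.

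For the actual $\bE$, which satisfies $\max_j|E_j\symdif B_j|=O(\delta)$, I would write $\one_{E_j}=\one_{B_j}+f_j$ with $\|f_j\|_{L^1}=O(\delta)$ and observe that the extra contribution of $f_j$ to $\tilde F_j$ depends on $(\bv,M)$ Lipschitz-continuously with Lipschitz constant $O(\delta)$. Hence the linearization of $\Phi$ at $(0,0)$ differs from the reference one by an operator of norm $O(\delta)$, and remains invertible for $\delta$ small. Since $\|\Phi(0,0)\|=O(\delta)$ by \eqref{eq:reducetoL2norms}, a contraction-mapping argument initialized at the linearized solution produces $(\bv,M)$ with $\Phi(\bv,M)=0$ and $|\bv|+\|M\|=O(\delta)$. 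Setting $\psi=e^M\in\Sl(d)$ then gives $\|\psi-I\|=O(\delta)$, as required.

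The main obstacle is regularity: the map $(\bv,M)\mapsto\tilde F_j\in L^2(S^{d-1})$ is only Lipschitz, not $C^1$, for arbitrary measurable $E_j$, so the classical $C^1$ inverse function theorem does not apply verbatim. I would handle this by running the inverse theorem as an explicit Picard iteration that requires only Lipschitz estimates and uniform invertibility of the reference linearization, following the approach of \cite{christRSult}. The only genuinely new ingredient compared to \cite{christRSult} is the invertibility of the $\bv$-block when $|J'|=m$ may exceed $3$, and this is supplied by the structural hypothesis as described above.
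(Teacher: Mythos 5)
Your proposal is correct, but it takes a genuinely different route from the paper's. The paper proves the lemma by \emph{combining two facts cited from} \cite{christRSult}: first, that there exist $\psi\in\Sl(d)$ and $w\in\reals^d$ with $\norm{\psi-I}+|w|=O(\delta)$ so that $\psi(E_n)+w$ has vanishing $\pi_1$ and $\pi_2$; and second, that for each $j$ there is a translate $v_j$ with $|v_j|=O(\delta)$ so that $E_j+v_j$ (more precisely, $\psi(E_j)+v_j$) has vanishing $\pi_1$. The only new step in the paper is the observation that $\bv\mapsto(L_j(\bv))_{j\in J'}$ is a bijection of $(\reals^d)^m$ onto itself, allowing one to choose $\bv$ with $L_n(\bv)=w$ and $L_j(\bv)=v_j$ for $j\in J'\setminus\{n\}$ simultaneously. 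You instead re-derive the whole statement from scratch as a single system, via a linearization and contraction-mapping argument of the sort \cite{christRSult} uses internally. Your analysis of the linearization — block-diagonal structure, invertibility of the $M$-block via the isomorphism of symmetric traceless matrices with $\scripth_2$, and invertibility of the $\bv$-block via linear independence of $\{L_j^1:j\in J'\}$ together with the structural hypothesis \eqref{eq:structure} — is correct, and the last of these coincides with the paper's one new observation. Your approach is more self-contained; the paper's is shorter by delegating the analytical work to a black-box.

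One simplification you missed: your worry about the map $(\bv,M)\mapsto\tilde F_j$ being merely Lipschitz (not $C^1$) into $L^2(S^{d-1})$ is unnecessary, because you only need the composition with the \emph{finite-dimensional} projection $\pi_1\oplus\pi_2$. After the change of variables $x=e^M y+L_j(\bv)$ (using $\det e^M=1$), one finds
\begin{equation*}
\pi_\nu(\tilde F_j)(\theta)=\int_{\reals^d}Z_\nu\!\left(\theta\cdot\frac{e^M y+L_j(\bv)}{|e^M y+L_j(\bv)|}\right)\one_{E_j}(y)\,dy ,
\end{equation*}
in which the domain of integration is fixed and the integrand is smooth in $(\bv,M)$ for $y$ in the relevant annulus; hence the map into $\scripth_1^{J'}\oplus\scripth_2$ is $C^1$ (indeed smooth) and the classical inverse function theorem applies directly. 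The Picard iteration you propose as a workaround is therefore correct but not needed.
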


The term ``associated'' means that
$\tilde F_j$ is constructed from $\tilde E_j$ in the same way that $F_j$ was constructed from $E_j$:
$\tilde F_j(\theta) = \int_0^\infty (\one_{\tilde E_j}-\one_{B_j})(t\theta)\,t^{d-1}\,dt$.
The norm $\norm{\psi-I}$ is defined by choosing any fixed norm on the vector space
of all $d\times d$ real matrices, expressing the elements $\psi,I$
of the general linear group as such matrices, and taking the norm of the
difference of the two matrices.

\begin{proof}[Proof of Lemma~\ref{lemma:balancing}]
It is shown in \cite{christRSult} that there exist
$\psi\in\Sl(d)$ and $w\in\reals^d$
such that 
$\norm{\psi-I}+ |w| \le C_\lambda |E_n\symdif B_n|$,
and $\tilde E_n = \psi(E_n)+w$ satisfies $\pi_\nu(\tilde E_n)=0$
for $\nu=1,2$
and $\tilde E_n\symdif B_n$ 
is contained in a $C_\lambda |E_n\symdif B_n|$--neighborhood of the boundary of $B_n$.

Likewise, it is proved in \cite{christRSult} that for each $j\in J$
there exists $v_j\in\reals^d$ such that $|v_j|\le C_\lambda |E_j\symdif B_j|$,
$\tilde E_j = E_j+v_j$
is contained in a $C_\lambda |E_j\symdif B_j|$--neighborhood of the boundary of $B_j$,
and $\pi_1(\tilde E_j)=0$.

Define $\bv\in (\reals^d)^m$ to be the unique vector satisfying $L_j(\bv)=v_j$
for all $j\in J'\setminus\{n\}$, and $L_n(\bv)=w$.
Then $\psi,\bv$ have all of the desired properties.
\end{proof}

Fix $J'\subset J$  satisfying the hypothesis of Lemma~\ref{lemma:balancing}, and fix $n\in J'$.
A $J$-tuple $\bG=(G_j: j\in J)$ of
spherical harmonics of some common degree $\nu$
is said to be {\em balanced} if $\nu\geq3$, or if $\nu=2$ and $G_n=0$,
or if $\nu=1$ and $G_j=0$ for every $j\in J'$.

By the reductions made thus far, Theorem~\ref{thm:stability} will follow from the next result,
whose proof occupies the remainder of the paper.

\begin{proposition}\label{prop:end}
Let $d\geq2$ and let $(\scriptl, \be)$ satisfy the hypotheses of Theorem \ref{thm:stability}. 
There exists $A<\frac{1}{2}$ such that for every balanced $J$-tuple 
$\bG$ of spherical harmonics, 
\begin{equation}
Q(\bG)\leq A\sum_j\gamma_jr_j^{1-d}||G_j||_{L^2}^2.
\end{equation}
\end{proposition}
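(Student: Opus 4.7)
The plan is to first reduce the infinite-dimensional spectral problem to a finite one via the compactness provided by Lemma~\ref{lemma:Tkcompactness}, and then rule out the equality case $A = \tfrac12$ by a perturbation argument. Choose $\nu_0$ so large that $\Lambda_\nu \le \tfrac{1}{4}\min_{j\in J}\gamma_j r_j^{1-d}$ for every $\nu \ge \nu_0$; then on each $\scripth_\nu^J$ with $\nu \ge \nu_0$ the desired inequality holds with $A = \tfrac14$. Since $Q$ is diagonal across degrees by rotation invariance, it remains to establish a strict bound on each of the finitely many low degrees $\nu \in \{1,\ldots,\nu_0-1\}$ with the balancing constraint imposed. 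Because $T_{i,j}$ acts as a scalar on $\scripth_\nu$ (Schur's lemma, using that $O(d)$ acts irreducibly on $\scripth_\nu$), the problem on each such degree reduces to a finite-dimensional generalized eigenvalue problem, and the sharp constant $A_\nu$ on the balanced subspace is attained.

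To see the weak bound $A_\nu \le \tfrac12$, given a balanced $\bG \in \scripth_\nu^J$, build for each small $\epsilon > 0$ sets $E_j(\epsilon) = \set{(r,\theta) : r \le r_j + \epsilon r_j^{1-d}G_j(\theta)}$. The associated functions in \eqref{define:bigFpm} satisfy $F_j = F_j^+ - F_j^- = \epsilon G_j + o(\epsilon)$ with disjoint positive and negative parts, whence $\norm{F_j^+}^2 + \norm{F_j^-}^2 = \epsilon^2 \norm{G_j}^2 + o(\epsilon^2)$. A careful bookkeeping in Lemmas~\ref{lemma:1storder}--\ref{lemma:3rdorder} upgrades Proposition~\ref{prop:exponbdry} to a two-sided equality modulo $o(\epsilon^2)$ for these specific test configurations:
\begin{equation*}
\scriptt_\scriptl(\bE(\epsilon)) = \scriptt_\scriptl(\bE^\star) + \epsilon^2\Bigl[Q(\bG) - \tfrac{1}{2}\sum_j \gamma_j r_j^{1-d}\norm{G_j}^2\Bigr] + o(\epsilon^2).
\end{equation*}
Combining with the Brascamp--Lieb--Luttinger inequality $\scriptt_\scriptl(\bE(\epsilon)) \le \scriptt_\scriptl(\bE^\star)$, dividing by $\epsilon^2$ and letting $\epsilon \to 0$, one obtains $Q(\bG) \le \tfrac{1}{2}\sum_j \gamma_j r_j^{1-d}\norm{G_j}^2$, so $A_\nu \le \tfrac12$.

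For the strict inequality, assume by contradiction that $A_\nu = \tfrac12$ is attained at some balanced $\bG \ne 0$. Then the family above satisfies $\scriptt_\scriptl(\bE(\epsilon)) = \scriptt_\scriptl(\bE^\star) + o(\epsilon^2)$. Balancing was designed to exhaust the infinitesimal symmetries of $\scriptt_\scriptl$ at $\bE^\star$: at $\nu = 1$, the $md$ constraints $\pi_1(\tilde F_j) = 0$ for $j \in J'$ match the $md$-dimensional action of translations by $\bv \in \reals^{md}$ via $L_j(\bv)\cdot\theta$; at $\nu = 2$, the $\tfrac{(d-1)(d+2)}{2}$ constraints $\pi_2(\tilde F_n) = 0$ match the traceless symmetric matrices parameterizing $\Sl(d)$; and at $\nu \ge 3$ no infinitesimal symmetry of $\scripto(\bE^\star)$ is present. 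A dimension count thus yields a constant $c > 0$ with $\dist(\bE(\epsilon), \scripto(\bE^\star)) \ge c\epsilon$. On the other hand, slicing along one-dimensional rays through the origin reduces $(\scriptl^d, \be)$ to its companion $(\scriptl^1, \be^{1/d})$, which inherits strict admissibility by Definition~\ref{defn:strictlyadmissiblehigher}; integrating the $d = 1$ stability theorem of \cite{christBLL} over radial profiles then gives $\scriptt_\scriptl(\bE^\star) - \scriptt_\scriptl(\bE(\epsilon)) \ge c'\epsilon^2$, contradicting the $o(\epsilon^2)$ bound. Hence $A_\nu < \tfrac12$ for each of the finitely many degrees, and taking the maximum yields the proposition.

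The main obstacle is the last step: arranging the reduction to the $d = 1$ result so as to avoid circularity (one must appeal to the $d = 1$ analogue already proved in \cite{christBLL}, not to Theorem~\ref{thm:stability} itself), and carefully verifying the dimension-matching between balancing constraints and infinitesimal symmetries, which relies on the linear independence of $\{L_j : j \in J'\}$ from Lemma~\ref{lemma:balancing}. A further technical point is that $K_j$ is only continuously differentiable (Lemma~\ref{lemma:diff2}) rather than Lipschitz, which is why the expansion in Step~2 can only be refined to $o(\epsilon^2)$ rather than $O(\epsilon^3)$; fortunately this refinement is exactly what is required to extract the contradiction in Step~3.
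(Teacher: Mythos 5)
The broad strategy is right: reduce to finitely many degrees via the decay of $\Lambda_\nu$, build the one-parameter families $\bE(s)$ from a balanced $\bG$, obtain the two-sided expansion of Lemma~\ref{lemma:likeexponbdry}, and rule out $A_\nu=\tfrac12$ by showing the left side of that expansion drops below $\scriptt(\bE^\star)$ by $\gtrsim s^2$. However, the crucial final step --- how the $d=1$ theorem is brought to bear --- contains two genuine gaps.

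First, the reduction you describe, ``slicing along one-dimensional rays through the origin'' and ``integrating the $d=1$ stability theorem over radial profiles,'' does not produce the $\scriptl^1$ multilinear form. The maps $L_j$ are \emph{linear}; they intertwine with affine slicing (Fubini in a fixed direction), not with polar slicing. Fixing an angle and intersecting each $E_j(s)$ with a ray through the origin gives intervals with one endpoint pinned at $0$, so the translation symmetry that underlies the $d=1$ theory is absent, and $\scriptt_\scriptl$ does not decompose over such slices. The paper's argument instead writes $\scriptt_\scriptl(\bE(s))=\int_{\Lambda_{d-1}}\scriptt_{\scriptl^1}(\bI(\by',s))\,d\lambda_{d-1}(\by')$, where $\bI(\by',s)$ records the slices of the $E_j(s)$ along parallel lines in the last coordinate; this is a Steiner-type decomposition and is what places the problem squarely inside the $d=1$ Rogers--Brascamp--Lieb--Luttinger framework.

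Second, even with the correct slicing, knowing that $\dist(\bE(\epsilon),\scripto(\bE^\star))\gtrsim\epsilon$ (which your dimension count makes plausible) does not by itself imply the gap $\scriptt(\bE^\star)-\scriptt(\bE(\epsilon))\gtrsim\epsilon^2$ without invoking Theorem~\ref{thm:stability} itself, which is circular. What is actually needed is a lower bound for the \emph{one-dimensional slice distances}: for a positive-measure set of $\by'$ the tuple $\bI(\by',s)$ must be $\gtrsim s$ from the orbit of centered intervals. This is where the algebraic machinery enters: the slice centers satisfy $c_j(y_j',s)=sP_j(y_j')+O(s^2)$ with $P_j$ the polynomial \eqref{eq:assocP}, the relevant slice distance is controlled by $P_\sharp(\bG)(\by')$, and Lemma~\ref{lemma:PsharpOG} (resting on Lemma~\ref{lemma:affine}) shows $P_\sharp(A(\bG))\not\equiv 0$ for some $A\in O(d)$ precisely because $\bG$ is balanced and nonzero. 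Your proposal omits this polynomial and the accompanying lemma entirely, yet it is the mechanism that converts balancing into a quantitative lower bound. Related but smaller issues: one must also check strict admissibility and genericity of the sliced one-dimensional data $(\scriptl^1,\be(\by',s))$ for the $d=1$ theorem to apply (the paper handles this by continuity and density near $\by'=0$).

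Everything up through the weak bound $A_\nu\le\tfrac12$ and the two-sided expansion is correct and matches the paper.
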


\section{Algebraic preliminaries}

For each $d\in\naturals$ define
\begin{equation} \Lambda_{d} = \{\by\in (\reals^{d})^J: \text{ there exists $\bx\in \reals^{md}$
satisfying $L_j^d(\bx)=y_j$  for every } j\in J\}.
\end{equation}
$\Lambda_d$ is a linear subspace of $(\reals^d)^J$ of dimension $md$.

\begin{lemma}\label{lemma:affine}
Let $k\ge 1$.
Let $(\varphi_j: j\in J)$ be a tuple of measurable functions
$\varphi_j: \reals^{k}\to\reals$ such that
for almost every $\by\in\Lambda_{k}$,
\begin{equation} (\varphi_j(y_j): j\in J)\in \Lambda_1. \end{equation}
Then each $\varphi_j$ agrees almost everywhere on $\reals^k$
with an affine function.
\end{lemma}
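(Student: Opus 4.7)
The plan is to reduce the hypothesis to a system of Cauchy-type functional equations for the $\varphi_j$, and then invoke the standard fact that a measurable function $f:\reals^k\to\reals$ whose difference $f(y+h)-f(y)$ depends only on $h$ (for a.e.\ $(y,h)$) must agree almost everywhere with an affine function.

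First I would choose a subset $S\subset J$ of cardinality $m$ with $\{L_i^1:i\in S\}$ a basis of the dual of $\reals^m$; such an $S$ exists because condition (iii) forces $|J|\ge m+1$ and the $L_j^1$ span the dual. Assume $S=\{1,\dots,m\}$. For each $j\in J\setminus S$ there are unique scalars $b_{i,j}$ with $L_j^1=\sum_i b_{i,j}L_i^1$, and by the structural hypothesis \eqref{eq:structure} the identical relation holds with $L^1$ replaced by $L^k$. Membership $(\zeta_j)_j\in\Lambda_1$ is then equivalent to $\zeta_j=\sum_i b_{i,j}\zeta_i$ for every $j\in J\setminus S$. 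Parametrizing $\by\in\Lambda_k$ by $\bx\in\reals^{mk}$ via $y_j=L_j^k(\bx)$, and using that $\bx\mapsto(L_i^k(\bx))_{i\in S}$ is a linear bijection $\reals^{mk}\to(\reals^k)^m$, the hypothesis becomes
\begin{equation*}
\varphi_j\Big(\sum_{i=1}^m b_{i,j}y_i\Big)=\sum_{i=1}^m b_{i,j}\varphi_i(y_i)
\end{equation*}
for a.e.\ $(y_1,\dots,y_m)\in(\reals^k)^m$ and every $j\in J\setminus S$.

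Next, for each fixed $i_0\in S$ I would produce $j\in J\setminus S$ with $b_{i_0,j}\ne 0$: otherwise every $L_\ell^1$ with $\ell\ne i_0$ would lie in $\Span\{L_i^1:i\in S\setminus\{i_0\}\}$, so $\bigcap_{\ell\ne i_0}\kernel(L_\ell^1)$ would contain the nonzero one-dimensional subspace $\bigcap_{i\in S\setminus\{i_0\}}\kernel(L_i^1)$, contradicting condition (iii). By condition (ii), $L_j^1$ is not a scalar multiple of $L_{i_0}^1$, so some $i'\in S\setminus\{i_0\}$ also satisfies $b_{i',j}\ne 0$. Shifting $y_{i_0}$ by $h$ in the functional equation and subtracting gives, with $u=\sum_i b_{i,j}y_i$,
\begin{equation*}
\varphi_j(u+b_{i_0,j}h)-\varphi_j(u)=b_{i_0,j}\bigl(\varphi_{i_0}(y_{i_0}+h)-\varphi_{i_0}(y_{i_0})\bigr).
\end{equation*}
Since $b_{i',j}\ne 0$, varying $y_{i'}$ moves $u$ independently of $y_{i_0}$, so a Fubini argument shows both sides equal a measurable function $g(h)$ of $h$ alone. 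This yields $\varphi_{i_0}(y+h)-\varphi_{i_0}(y)=g(h)/b_{i_0,j}$ for a.e.\ $(y,h)$. Additivity of the right-hand side in $h$ follows by iterating the identity, and measurable additive maps $\reals^k\to\reals$ are linear, so $\varphi_{i_0}$ agrees a.e.\ with an affine function.

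Applying this argument to each $i_0\in S$ settles all indices in $S$. For $j\in J\setminus S$, substituting $y_i=0$ for $i\ne i_0$ (with $b_{i_0,j}\ne 0$) into the functional equation expresses $\varphi_j(b_{i_0,j}y_{i_0})$ as an affine function of $y_{i_0}$, hence $\varphi_j$ is affine as well. The main obstacle is bookkeeping of null sets in the Fubini step that separates the $u$- and $y_{i_0}$-dependence, so that the measurable Cauchy equation can be invoked rigorously; after that technicality the argument is driven entirely by the nondegeneracy conditions (ii) and (iii).
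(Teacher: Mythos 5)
Your proof is correct and follows the same overall strategy as the paper's: choose a basis subset $S=J'\subset J$ of size $m$, reduce the hypothesis to a Pexider-type functional equation $\varphi_j(\sum_i b_{i,j}y_i)=\sum_i b_{i,j}\varphi_i(y_i)$ for $j\notin S$, and invoke regularity of measurable solutions. The difference is in how the last step is handled. The paper fixes a single index $i>m$ and appeals to ``a well-known result'' that all $\varphi_1,\dots,\varphi_m$ together with $\varphi_i\circ\psi_i$ are affine; as stated this is slightly loose, since the functional equation for a single $i$ gives no information about any $\varphi_j$ whose coefficient $a_{i,j}$ vanishes. You instead argue index-by-index: for each $i_0\in S$ you produce, using nondegeneracy condition (iii), some $j\notin S$ with $b_{i_0,j}\neq 0$, and then use condition (ii) to get a second nonzero coefficient $b_{i',j}$, which is exactly what lets the Fubini separation of $u$ and $y_{i_0}$ go through and reduces matters to the measurable Cauchy equation. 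This is more careful and makes the dependence on (ii) and (iii) transparent, at the cost of re-proving the Pexider regularity fact the paper cites. One small point worth tightening in your last paragraph: rather than literally substituting $y_i=0$ for $i\neq i_0$ (a null set), observe that once all $\varphi_i$, $i\in S$, have been modified on null sets to be genuinely affine, the right-hand side $\sum_i b_{i,j}\varphi_i(y_i)$ is affine in $(y_1,\dots,y_m)$ and depends (a.e.) only on $u=\sum_i b_{i,j}y_i$, hence factors through a surjective linear map, forcing $\varphi_j$ to agree a.e.\ with an affine function of $u$.
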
 

\begin{proof} 
Choose a subset $J'\subset J$ of cardinality $m$ such that
$\{L_j^1: j\in J'\}$ is linearly independent. 
Then  $L_j^k: (\reals^k)^m\to\reals^k$, and
$\{L_j^k: j\in J'\}$ is likewise linearly independent.

Identify $J'$ with $\{1,2,\dots,m\}$,
and identify $J\setminus J'$ with $\{m+1,m+2,\dots,|J|\}$.
Let $h:\reals^m\to\reals^m$ be an invertible linear transformation
satisfying $L_j^1\circ h(\bx)=x_j$ for each $j\in J'$.
Change variables by $\bx\mapsto h(\bx)$, so that henceforth,
$L_j^1(\bx)=x_j$ for each $j\in J'$.
Then
$L_j^k(\bx)=x_j$ for each $j\in J'$, for every $\bx\in (\reals^k)^m$.

There exist linear mappings $\psi_j^1: \reals^m\to\reals^1$ such that
$\Lambda_1$ is identified, in these new variables, with
$\{\bx\in\reals^{J}: x_j=\psi_j^1(x_1,\dots,x_m)
\text{ for all }j\in J'\}$. 
Likewise, $\Lambda_k$ is identified with
$\{\bx\in(\reals^k)^{J}: x_j=\psi_j^k(x_1,\dots,x_m)
\text{ for all }j\in J'\}$. 
It is more convenient
for our purpose to employ an alternative description of $\Lambda_1$:
for certain coefficients $a_{i,j}\in\reals$, 
$\Lambda_1$ is equal to the set of all $\bx\in\reals^J$ that satisfy $x_i = \sum_{j=1}^m a_{i,j}x_j$
for all $i>m$. 
In these terms, the hypothesis of the lemma  
is that for almost every $\by\in(\reals^{k})^m$, for each $i>m$,
\begin{equation}\label{eq:funceq}
\varphi_i(\psi_i(y_1,\dots,y_m)) = \sum_{j=1}^m a_{i,j}\varphi_j(y_j) \ \text{ for every }  i>m.
\end{equation}

Consider any particular $i>m$.
By hypothesis, the mapping $\reals^m\owns (y_1,\dots,y_m)\mapsto \psi_i(y_1,\dots,y_m)\in\reals^1$
is not a scalar multiple of any of the $m$ mappings $(y_1,\dots,y_m)\mapsto y_j$.
Fix any $i>m$.
A well-known result states that under this condition, 
for any solution $(\varphi_1,\dots,\varphi_m,\varphi_i)$ of \eqref{eq:funceq},
each of the functions $\varphi_1,\dots,\varphi_m,\varphi_i\circ\psi_i$
must agree almost everywhere on $\reals^k$ with an affine function.
From the surjectivity and linearity of $\psi_i$, it follows immediately that
$\varphi_i$ itself must likewise be affine.
\end{proof}

Let $\bG=(G_j: j\in J)$ be a $J$-tuple of spherical harmonics of some
common degree $\nu$.
To each $G_j$ associate a polynomial $G_{j,o}$, defined as follows:
Express $G_j$, regarded as a homogeneous polynomial defined on $\reals^d$, 
as a linear combination of monomials in $(x_1,\dots,x_d)$.
Such an expansion is unique.
Let $G_{j,o}$ be the sum of all terms that have odd degrees with respect to $x_d$.
For $x'\in\reals^{d-1}$ define 
\begin{equation}\label{eq:assocP}
P_j(x')=r_j^{2-d-n}x_d^{-1}G_{j,o}(x',x_d)\text{ with }x_d=(r_j^2-|x'|^2)^{1/2}
\end{equation}
More precisely,
$x_d^{-1}G_{j,o}(x',x_d)$ is a linear combination of monomials in $(x',x_d^2)$.
Substituting $r_j^2-|x'|^2$ for $x_d^2$ defines $P_j(x')$.
Thus $P_j$ is a real-valued polynomial, with domain $\reals^{d-1}$.

Define $P_\sharp(\bG): \Lambda_{d-1}\to [0,\infty)$ by 
\begin{equation} \label{eq:Psharpdefn} 
P_\sharp(\bG)(\theta)^2 =\dist^2((P_j(\theta))_{j\in J},\Lambda_1).\end{equation}
This is the distance from the vector $(P_j(\theta): j\in J)$ in $\reals^J$
to a certain linear subspace of $\reals^J$, so it is the restriction to $S^{d-1}$
of a polynomial of degree $2\nu$.

\begin{lemma}\label{lemma:PsharpOG}
Let $d\geq2$. For any nonzero balanced 
$\bG\in \scripth_\nu^J$,
there exists $A\in O(d)$ such that $P_\sharp(A(\bG))\neq0$.
\end{lemma}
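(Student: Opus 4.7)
The plan is to argue by contradiction. Suppose $P_\sharp(A(\bG))\equiv 0$ for every $A\in O(d)$. For each such $A$, write $P_j^A$ for the polynomial assigned via \eqref{eq:assocP} to the $j$-th component $G_j\circ A^{-1}$ of $A(\bG)$. The hypothesis says that $(P_j^A(\theta_j))_{j\in J}\in \Lambda_1$ for every $\theta\in\Lambda_{d-1}$, so Lemma~\ref{lemma:affine} applied with $k = d-1$ and $\varphi_j = P_j^A$ implies that each $P_j^A$ is an affine polynomial on $\reals^{d-1}$. The remainder of the argument splits on the common degree $\nu$.

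For $\nu\geq 3$, the affineness of $P_j^A$ is a genuine constraint because a priori $P_j^A$ has degree $\nu - 1 \geq 2$. Writing the odd-in-$x_d$ part as $G_{j,o}^A(x',x_d) = x_d\, q(x',x_d^2)$ and expanding $q(x',u) = \sum_{k\geq 0} u^k q_k(x')$ with $\deg q_k = \nu - 1 - 2k$, the harmonicity of $G_{j,o}^A$ yields the recursion $\Delta q_k = -2(k+1)(2k+3)\, q_{k+1}$, while the vanishing of the degree-$\geq 2$ terms of $P_j^A(x') = \sum_k (r_j^2 - |x'|^2)^k q_k(x')$ combined with this recursion forces $q\equiv 0$. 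As a sanity check, affineness alone leaves $G_{j,o}^A$ in the form $c\,x_d |x|^{\nu - 1}$ when $\nu$ is odd, or $c\,x_d\, h(x')\,|x|^{\nu - 2}$ with $h$ linear when $\nu$ is even, and the identity $\Delta(x_d |x|^{2s}) = 2s(2s + d)\, x_d |x|^{2s - 2}$ kills the constant $c$ in both cases. Hence $G_{j,o}^A = 0$ for every $A\in O(d)$, so $G_j$ is invariant under reflection through every hyperplane of $\reals^d$ through the origin; such reflections generate $O(d)$, so $G_j$ is $O(d)$-invariant, and no spherical harmonic of positive degree has this property. Thus $G_j = 0$ for every $j$.

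For $\nu\in\{1,2\}$, affineness is automatic and I would instead extract the coefficient relations implicit in the proof of Lemma~\ref{lemma:affine}. Choose coordinates so that $L_j^1(\bx) = x_j$ for $j\in J'$ and $L_i^1(\bx) = \sum_{j\in J'} a_{i,j} x_j$ for $i\in J\setminus J'$; writing $P_j^A(y) = \alpha_j^A + \langle c_j^A, y\rangle$, matching constants and first-order parts in the functional equation $P_i^A\bigl(\sum_j a_{i,j} y_j\bigr) = \sum_j a_{i,j} P_j^A(y_j)$ yields $\alpha_i^A = \sum_j a_{i,j}\alpha_j^A$ and $a_{i,j}(c_i^A - c_j^A) = 0$. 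For $\nu = 1$, $c_j^A\equiv 0$, and the balancedness $\alpha_j^A = 0$ for $j\in J'$ propagates via the first relation to every $j\in J$; since $\alpha_j^A$ encodes the $d$-th component of a rotation of the coefficient vector of $G_j$, its vanishing for every $A\in O(d)$ forces $G_j = 0$. For $\nu = 2$, the second relation propagates $c_n^A = 0$ along the bipartite graph on $J'\sqcup(J\setminus J')$ with edges $\{j,i\}$ where $a_{i,j}\neq 0$; connectivity of this graph, forced by nondegeneracy~(iii) via the observation that a disconnection would confine $\{L_k^1 : k\neq n_0\}$ to a proper subspace of $(\reals^m)^*$ for an appropriate $n_0$, then gives $c_j^A = 0$ for every $j$. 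Hence $G_{j,o}^A = 0$ for every $A$, and the $O(d)$-invariance argument of the previous case yields $G_j = 0$.

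The main obstacle lies in the $\nu\geq 3$ case: uniformly tracking how the Laplacian recursion interacts with the affineness condition across all weighted-homogeneous degrees of $q(x',u)$. For each fixed $\nu$ this reduces to a routine computation made transparent by the identity for $\Delta(x_d |x|^{2s})$, but a uniform statement requires patient bookkeeping. A secondary technical point is the bipartite-graph connectivity claim in the $\nu = 2$ case, where a potential disconnection must be shown incompatible with nondegeneracy~(iii).
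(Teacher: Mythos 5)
Your overall structure matches the paper's---argue by contradiction via Lemma~\ref{lemma:affine}, treat $\nu\geq3$, $\nu=2$, $\nu=1$ separately, and for the small degrees extract coefficient relations from the functional equation \eqref{eq:funceq}---but there is one genuinely different route and one genuine gap.

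The different route is in the $\nu\geq3$ case. The paper cites a lemma of \cite{christRSult} (for any nonzero spherical harmonic $G$ of degree $\geq3$, some $A\in O(d)$ makes the associated polynomial $P$ non-affine) and concludes immediately. You instead attempt to re-derive this via harmonicity, a Laplacian recursion, and an $O(d)$-invariance argument. Your recursion and the identity $\Delta(x_d|x|^{2s})=2s(2s+d)\,x_d|x|^{2s-2}$ are correct and the plan is viable, but, as you say yourself, the general-degree bookkeeping is not carried out. A shortcut worth knowing: if $P_j^A$ is affine, then on $|x|=r_j$ the homogeneous harmonic polynomial $G_{j,o}^A$ agrees with $x_d\cdot(\text{affine in }x')$, which is a sum of spherical harmonics of degree $\le 2$; orthogonality of $\scripth_\nu$ with $\bigoplus_{\mu\le2}\scripth_\mu$ gives $G_{j,o}^A=0$ for $\nu\geq3$ at once, and then your reflection-invariance argument finishes.

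The gap is in the $\nu=2$ case. The bipartite graph you build need not be connected, and your justification---that a disconnection would confine $\{L_k^1:k\neq n_0\}$ to a proper subspace of $(\reals^m)^*$ for some $n_0$---is false. Take $m=4$, $J=\{1,\dots,6\}$, $L_j^1(\by)=y_j$ for $1\le j\le4$, $L_5^1=y_1+y_2$, $L_6^1=y_3+y_4$, with $J'=\{1,2,3,4\}$. This $\scriptl^1$ is nondegenerate, and $(\scriptl,\be)$ is strictly admissible for suitable $\be$; yet the graph has two components, $\{1,2,5\}$ and $\{3,4,6\}$, while for every $n_0$ the family $\{L_k^1:k\neq n_0\}$ still spans $(\reals^4)^*$. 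So the propagation of $c_n^A=0$ reaches only the component containing $n$, and the argument stalls on the other component. (The paper's proof of this case rests on the assertion that each $\psi_i$, $i\notin J'$, depends on $y_n$, which also fails here; the situation in which some $L_i$ lies in the span of $\{L_j:j\in J',\,j\neq n\}$ genuinely needs an additional idea, and your connectivity claim is not a valid substitute.)
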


\begin{proof}[Proof of Lemma \ref{lemma:PsharpOG}]
It is shown in \cite{christRSult} that for
any spherical harmonic $G$ of any degree $\ge 3$, 
there exists $A\in O(d)$ such that the
polynomial $P$ associated to $G\circ A$ by \eqref{eq:assocP} is not affine.

If $P_\sharp(\bG)\equiv 0$ then each according to Lemma~\ref{lemma:affine},
$P_j$ is affine for every $j\in J$. Thus if $P_\sharp(A(\bG))
\equiv 0$ for every $A\in O(d)$
then the polynomial $P_j$ associated to $G_j\circ A$ must vanish identically for every $j\in J$
and every $A\in O(d)$. Thus $\bG\equiv 0$.

Consider the case in which $\nu=2$.
The condition that $\bG$ is balanced becomes $G_m\equiv 0$.
Therefore the condition that $P_\sharp(\bG)\equiv 0$,
according to \eqref{eq:funceq}, means that for each $i>m$,
$P_i\circ \psi_i(y_1,\dots,y_m))$
can be expressed as $\sum_{j=1}^{m-1} c_{i,j} P_j(y_j)$,
where $\psi_i$ is the linear mapping introduced in the discussion around \eqref{eq:funceq}.
Because the invertible linear transformation $\psi_i(y_1,\dots,y_m)$ is not independent of $y_m$,
this functional relation implies that $P_i$, as well as $P_j$ for all $y\le m-1$, are constant.
A lemma in
\cite{christRSult} states that 
if $G$ is a spherical harmonic of degree $\nu=2$
for which the polynomial $P$ associated to $G\circ A$ by \eqref{eq:assocP}
is constant for every $A\in O(d)$, 
then $G\equiv 0$.

For $\nu=1$,
balancing means that $G_j\equiv 0$ for all $j\le m$, and
\eqref{eq:funceq} becomes
$P_i\circ\psi_i\equiv 0$ for all $i>m$. 
Thus if $P_\sharp(A(\bG))\equiv 0$ for every $A\in O(d)$
then for every $A\in O(d)$ and every $i\in J$,
the polynomial  $P_i$ associated to $G_j\circ A$
vanishes identically.
Therefore for any $A\in O(d)$, $G_{i,o}\equiv 0$.
Since $G_i$ has degree $1$, this means that $G_i\circ A$ is independent of $x_d$.
This can hold for every $A\in O(d)$ only if $G_i\equiv 0$.
\end{proof}

\section{Final Steps}
Proposition~\ref{prop:end} states that 
\begin{equation} \label{eq:conclusionrestated}
\text{There exists $A<\tfrac12$ such that }
Q(\bG)\leq A\sum_j\gamma_jr_j^{1-d}||G_j||_{L^2}^2
\end{equation}
for every balanced $\bG\in\scripth_\nu^J$, for every $\nu\ge 1$.
We are now in a position to complete its proof.
It suffices to prove that for each $\nu\ge 1$ there exists $A<\tfrac12$,
possibly dependent on $\nu$, for which this holds;
Lemma~\ref{lemma:Tkcompactness} then yields uniformity in $\nu$.
Moreover, since the space of spherical harmonics of degree $\nu$ has finite dimension,
a simple compactness argument shows that
it suffices to prove this with $A$ depending on $\bG$.

Let $\nu \ge 1$, and consider any such $\bG$.
There exists $A\in O(d)$ satisfying $P_\sharp(A(\bG))\ne 0$.
The conclusion of Proposition~\ref{prop:end} is invariant under the diagonal action $\bG\mapsto A(\bG)$
of $O(d)$.  According to Lemma~\ref{lemma:PsharpOG},
there exists $A\in O(d)$ suxch that $P_\sharp(\bG)\ne 0$.
Therefore it suffices to prove \eqref{eq:conclusionrestated} under the additional
hypothesis that $P_\sharp(\bG)$ does not vanish identically.

For each $j\in J$, define $\phi_j:S^{d-1}\times(-\frac{1}{2},\frac{1}{2})\rightarrow\reals^+$ 
by the equations
\begin{equation}
\int_{r_j}^{r_j+\phi_j(\theta,s)}t^{d-1}dt=sG_j(\theta).
\end{equation}
Then $\phi_j(\theta,s)$ has the same sign as $sG_j(\theta)$.
As noted in \cite{christRSult}, 
\begin{equation} r_j^{d-1}\phi_j(\theta,s)=sG_j(\theta)+O(s^2).  \end{equation}

For $s\in(-\frac{1}{2},\frac{1}{2})$, define sets $E_j(s)\subset \reals^d$ by
\begin{equation}
E_j(s)=\{t\theta:t\leq r_j+\phi_j(\theta,s)\}.
\end{equation}
Define $\bE(s)=\bE_\bG(s)=(E_j(s):j\in J)$. Since $\int_{S^{d-1}}G_jd\sigma=0$, $|E_j(s)|=|E_j|$ for all $s$. The function $F_{j,s}$ associated to $E_j(s)$ depends smoothly on $(\theta,s)$ and satisfies $F_{j,s}=sG_j+O(s^2)$.

We will exploit a variant of Proposition~\ref{prop:exponbdry}.
\begin{lemma}\label{lemma:likeexponbdry}
Let $d\geq2$, $\nu\in \naturals$, and $(\scriptl,\be)$ satisfy the hypotheses of Theorem \ref{thm:stability}. 
There exist $c,\eta>0$ such that uniformly for all $J$-tuples $\bG$ 
of spherical harmonics of degree $\nu$ satisfying $||\bG||=1$, 
\begin{equation}\label{eq:likeexponbdry}
\scriptt(\bE(s))=\scriptt(\bE^\star)-\tfrac{1}{2}s^2\sum_{j\in J}\gamma_kr_k^{1-d}||G_j||_{L^2}^2+s^2Q(\bG)+o(s^2)
\end{equation}
whenever $|s|\leq\eta$.
\end{lemma}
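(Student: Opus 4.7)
The plan is to mirror the proof of Proposition~\ref{prop:exponbdry} for the explicit smooth family $\bE(s)$. From the defining equation for $\phi_j$, inverting $\int_{r_j}^{r_j+\phi_j} t^{d-1}\,dt = sG_j(\theta)$ yields $\phi_j(\theta,s) = r_j^{1-d} s G_j(\theta) + O(s^2)$ uniformly in $\theta$; hence $|E_j(s)\symdif B_j|=O(|s|)$ and $E_j(s)\symdif B_j\subset\{x\in\reals^d: \big|\,|x|-r_j\,\big|\le C|s|\}$ uniformly for $\|\bG\|=1$, placing us in the near-boundary regime of \S\ref{subsect:boundaries} with $\delta=O(|s|)$. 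Setting $f_{j,s}=\one_{E_j(s)}-\one_{B_j}$ and expanding by multilinearity of $\scriptt_\scriptl$,
\begin{equation*}
\scriptt(\bE(s))=\scriptt(\bE^\star)+\sum_{j\in J}\langle K_j,f_{j,s}\rangle+\sum_{i\ne j}c_{i,j}\iint M_{i,j}f_{i,s}f_{j,s}+R(s),
\end{equation*}
with $R(s)$ collecting all terms containing three or more factors $f_{j,s}$.

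For the first-order contribution I would invoke Lemma~\ref{lemma:diff2} to write the radial profile of $K_j$ as $K_j(r)=K_j(r_j)-\gamma_j(r-r_j)+o(|r-r_j|)$ near $r=r_j$, pass to polar coordinates, and integrate. The constant piece vanishes because $\int_{S^{d-1}}G_j\,d\sigma=0$, and the quadratic piece, computed from $\phi_j^2 = r_j^{2-2d}s^2G_j^2 + O(s^3)$, produces
\begin{equation*}
\langle K_j,f_{j,s}\rangle=-\tfrac{1}{2}\gamma_j r_j^{1-d}s^2\|G_j\|_{L^2}^2+o(s^2).
\end{equation*}
For the second-order contribution, a direct check from definitions \eqref{define:bigFpm}--\eqref{defn:bigF} gives $F_{j,s}(\theta)=sG_j(\theta)$ \emph{exactly}, so Lemma~\ref{lemma:2ndorder} together with the bilinearity of $Q_{i,j}$ yields $c_{i,j}\iint M_{i,j}f_{i,s}f_{j,s}=s^2 Q_{i,j}(G_i,G_j)+O(s^3)$. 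Summing over $i\ne j$ produces $s^2 Q(\bG)+O(s^3)$. Lemma~\ref{lemma:3rdorder} bounds $R(s)$ by $O(\delta^3)=O(s^3)=o(s^2)$, and combining the three contributions yields the stated identity.

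Uniformity over $\|\bG\|=1$ follows from finite-dimensionality of $\scripth_\nu^J$: all norms on this space are equivalent, so $\|\phi_j\|_{L^\infty}=O(|s|)$ uniformly, and the $o(s^2)$ remainder from the Taylor expansion inherits uniformity from the uniform continuity of the radial derivative of $K_j$ on a fixed neighborhood of the sphere $|x|=r_j$, while the constants in Lemmas~\ref{lemma:2ndorder} and \ref{lemma:3rdorder} depend only on the fixed data $\scriptl,\be,d$. The main obstacle is securing an equality rather than the one-sided bound furnished by Lemma~\ref{lemma:1storder}: this forces the use of the genuine $C^1$ smoothness of $K_j$ provided by Lemma~\ref{lemma:diff2}, paired with the explicit smoothness of the parametrization $s\mapsto\phi_j(\cdot,s)$, in place of the bare log-concavity argument that sufficed in the proof of Proposition~\ref{prop:exponbdry}.
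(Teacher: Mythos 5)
Your proposal is correct and follows essentially the same route the paper takes (the paper itself only sketches the argument and defers details to \cite{christRSult}): expand $\scriptt(\bE(s))$ multilinearly, use $F_{j,s}=sG_j$ and the disjoint supports of $F_{j,s}^\pm$ to turn the one-sided first-order bound into the equality $\langle K_j,f_{j,s}\rangle=-\tfrac12\gamma_jr_j^{1-d}s^2\|G_j\|^2+o(s^2)$, handle the bilinear terms via Lemma~\ref{lemma:2ndorder}, and dispose of the rest via Lemma~\ref{lemma:3rdorder}, with uniformity coming from finite-dimensionality of $\scripth_\nu^J$. The only small imprecisions are the undefined coefficients $c_{i,j}$ in your multilinear expansion (they should simply be the unordered-pair bookkeeping) and the closing remark that Proposition~\ref{prop:exponbdry} rests on a ``bare log-concavity argument'' --- in fact Lemma~\ref{lemma:1storder} already uses the $C^1$ regularity of $K_j$; what is new here is that the special structure of $\bE(s)$ upgrades that inequality to an equality.
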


Lemma~\ref{lemma:likeexponbdry} and Proposition~\ref{prop:exponbdry} are closely related,
but differ in two essential respects. 
The lemma is only concerned with tuples $\bE(s)$ of a special type, but it yields
a stronger conclusion, with equality up to the indicated remainder rather than with a one-sided inequality.
We will analyze its left-hand side by other means, will use this to gain control of its right-hand
side, and then will use that information to prove \eqref{eq:conclusionrestated}.

The proof of Lemma~\ref{lemma:likeexponbdry} is essentially identical to the proof
of the corresponding lemma in \cite{christRSult}, so the details are omitted. \qed

The sets $E_j(s)$ enjoy three properties that are not shared by general
sets $E_j$. Firstly, for all $s$ sufficiently close to $0$,
the functions $F_j^\pm(s)$ are continuous and $F_j^+(s),F_j^-(s)$ are nonzero on disjoint sets.
Therefore $\norm{F_j(s)}_{\lt}^2 = \norm{F_j^+(s)}_{\lt}^2 + \norm{F_j^-(s)}_{\lt}^2$.
Secondly, for each $\theta\in S^{d-1}$, 
$\{t\in\reals^+:t\theta\in E_j\setminus B_j\}$ is an interval whose left endpoint equals $r_j$, 
and $\{t\in\reals^+:t\theta\in B_j\setminus E_j\}$ is an interval whose right endpoint equals $r_j$. 
This follows from the smoothness of spherical harmonics and the fact that 
any $C^N$ norm of a spherical harmonic is majorized by a constant multiple of its $\lt$ norm,
with the constant depending only on degree $\nu$ and dimension $d$.
Thirdly,
\begin{equation}
\langle K_j,f_j(s)\rangle = -\tfrac12 \gamma_j r_j^{1-d} \norm{F_j(s)}_{\lt}^2 + o(\delta^2).
\end{equation}
This is proved just as in \cite{christRSult}.

\begin{lemma}\label{lemma:Es}
Let $d\ge 2$.
Suppose that $(\scriptl,\be)$ satisfies the hypotheses of Theorem~\ref{thm:stability}. 
For each $\nu\in\naturals$ there exist $c,\eta>0$, depending on $\nu, d, \be$, 
such that for every balanced 
$\bG\in\scripth_\nu^J$
satisfying $||\bG||=1$,
\begin{equation}\label{eq:Es}
\scriptt(\bE(s))\leq\scriptt(\bE^\star)-cs^2
\ \text{ whenever $|s|\le\eta$.}
\end{equation}
\end{lemma}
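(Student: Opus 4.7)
The plan is to combine an $O(d)$-symmetrization reduction with a fiber decomposition in the $x_d$-direction that reduces matters to the sharp one-dimensional inequality for tuples of intervals established as one of the main steps in \cite{christBLL}. Since $\scriptt_\scriptl$ is $O(d)$-invariant, because $|\det A|=1$ for $A\in O(d)$, and the flow $\bE_\bG(s)$ is covariant under the diagonal action of $O(d)$ (i.e., $\bE_{A\bG}(s)=A(\bE_\bG(s))$), Lemma~\ref{lemma:PsharpOG} allows one to assume without loss of generality that $P_\sharp(\bG)\not\equiv 0$ on $\Lambda_{d-1}$.

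Next I would identify $\reals^d$ with $\reals^{d-1}\times\reals^1$ and use the structural relation $L_j^d(\bx',\bt)=(L_j^{d-1}(\bx'),L_j^1(\bt))$ to obtain the fiber representation
\[
\scriptt_\scriptl(\bE(s))=\int_{(\reals^{d-1})^m}\Phi_{\scriptl^1}(\bI(\bx',s))\,d\bx',
\]
where, for $\bx'$ with $|L_j^{d-1}(\bx')|<r_j-O(s)$ for every $j$, the set $I_j(\bx',s):=\{t\in\reals:(L_j^{d-1}(\bx'),t)\in E_j(s)\}$ is a single interval. An elementary perturbative expansion of the defining boundary equation $r(\theta)=r_j+\phi_j(\theta,s)$, together with the odd/even decomposition of $G_j$ with respect to $x_d$, yields midpoint $c_j(\bx',s)=sP_j(L_j^{d-1}(\bx'))+O(s^2)$ and half-length $\sqrt{r_j^2-|L_j^{d-1}(\bx')|^2}+O(s)$; only $G_{j,o}$ contributes to the linear term in the midpoint, and a direct computation matches this to $P_j$ as defined by \eqref{eq:assocP}. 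The sharp one-dimensional inequality from \cite{christBLL} applied fiberwise then gives
\[
\Phi_{\scriptl^1}(\bI(\bx',s))\le \Phi_{\scriptl^1}(\bI^{\mathrm{sym}}(\bx',s))-c'(\bx')\,\dist\!\big((c_j(\bx',s))_{j\in J},\Lambda_1\big)^2,
\]
where $\bI^{\mathrm{sym}}(\bx',s)$ denotes the tuple of intervals with the same half-lengths centered at $0$, and $c'(\bx')>0$ depends continuously on $\bx'$ on the relevant compact set. By the definition of $P_\sharp$, the defect expands as $s^2P_\sharp(\bG)(L^{d-1}(\bx'))^2+O(s^3)$. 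Integrating over $\bx'$, noting that $\int\Phi_{\scriptl^1}(\bI^{\mathrm{sym}}(\bx',s))\,d\bx'=\scriptt_\scriptl(\bE^{\mathrm{sym}}(s))$ for the Steiner-symmetrized tuple $\bE^{\mathrm{sym}}(s)$, and invoking \eqref{eq:BLL} (which gives $\scriptt_\scriptl(\bE^{\mathrm{sym}}(s))\le\scriptt_\scriptl(\bE^\star)$ since $|E_j^{\mathrm{sym}}(s)|=|E_j|$), I obtain
\[
\scriptt_\scriptl(\bE(s))\le \scriptt_\scriptl(\bE^\star)-s^2\int c'(\bx')\,P_\sharp(\bG)(L^{d-1}(\bx'))^2\,d\bx'+o(s^2),
\]
and the integral is strictly positive because $P_\sharp(\bG)$ is a nonzero polynomial on $\Lambda_{d-1}$.

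For uniformity over balanced $\bG\in\scripth_\nu^J$ with $\|\bG\|=1$, compactness of this finite-dimensional sphere together with continuity of $\bG\mapsto\sup_{A\in O(d)}\int c'(\bx')\,P_\sharp(A\bG)(L^{d-1}(\bx'))^2\,d\bx'$ and the pointwise strict positivity guaranteed by Lemma~\ref{lemma:PsharpOG} combined with $O(d)$-invariance yield uniform constants $c,\eta>0$. The hardest step will be making the $d=1$ quantitative defect estimate effective uniformly across fibers: one must verify that the strict admissibility of $(\scriptl^1,\be^{1/d})$ descends, on a set of $\bx'$ of positive measure, to sufficient strict admissibility of the induced one-dimensional interval configurations (with lengths $2\sqrt{r_j^2-|L_j^{d-1}(\bx')|^2}+O(s)$) that the defect constant $c'(\bx')$ of \cite{christBLL} remains bounded below uniformly on a set of positive measure on which $P_\sharp(\bG)(L^{d-1}(\bx'))$ does not vanish.
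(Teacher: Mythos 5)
Your overall route coincides with the paper's: reduce via the $O(d)$-action and Lemma~\ref{lemma:PsharpOG} to $P_\sharp(\bG)\not\equiv 0$, slice $\scriptt_\scriptl$ along the last coordinate into one-dimensional functionals $\scriptt_{\scriptl^1}(\bI(\by',s))$, compare the Steiner-symmetrized tuple to $\bE^\star$ by Rogers--Brascamp--Lieb--Luttinger, apply the $d=1$ stability theorem of \cite{christBLL} fiberwise, and use $c_j=sP_j+O(s^2)$ to convert the one-dimensional defect into $s^2P_\sharp(\bG)^2$.

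There is, however, a genuine gap in the step you yourself flag as the hardest. You write that the constant $c'(\bx')>0$ in the fiberwise defect estimate ``depends continuously on $\bx'$ on the relevant compact set,'' and you frame the verification needed as one of \emph{strict admissibility} of the induced one-dimensional configurations. But the $d=1$ theorem from \cite{christBLL} carries an additional \emph{genericity} hypothesis on $(\scriptl^1,\be(\by'))$ — namely, that each extreme point of the polytope $\scriptk^1_{\be(\by')}$ is determined by exactly $m$ active constraints — which is independent of and not implied by strict admissibility. This genericity can fail at $\by'=0$ (indeed, the central fiber $\be(\bzero)=\omega_d^{-1/d}\be^{1/d}$ may be non-generic), so the fiberwise defect constant is not continuous and need not be positive there; asserting continuity and positivity of $c'(\bx')$ by appealing only to strict admissibility and compactness does not close the argument. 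The paper resolves this by observing that the set of $\by'$ for which $(\scriptl^1,\be(\by'))$ is generic is open and dense near $0$, and then working on a positive-measure subset $\Omega(s)$ of such $\by'$ where both genericity and admissibility hold and $P_\sharp(\bG)$ is bounded away from zero. Without addressing genericity explicitly, your proof of a uniform lower bound on a positive-measure set of fibers is incomplete.
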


Proposition~\ref{prop:end} follows from 
Lemmas~\ref{lemma:Es} and \ref{lemma:likeexponbdry}.
Indeed,
subtracting \eqref{eq:Es} from \eqref{eq:likeexponbdry}, dividing by $s^2$, and rearranging terms, gives
\begin{equation}
-\tfrac{1}{2}\sum_j\gamma_jr_j^{1-d}||G_j||_{L^2}^2+Q(\bG)\leq -c||\bG||^2+O(s)\leq -c'||\bG||^2
\end{equation}
for sufficiently small $s$, for some $c,c'>0$. 
This is equivalent to \eqref{eq:conclusionrestated}.
\qed

Thus it remains to prove Lemma~\ref{lemma:Es}.
For each $j\in J$ and $x'\in\reals^d$, define 
\begin{equation} I_j(x',s)=\{t\in\reals:(x',t)\in E_j(s)\}. \end{equation}
Write $\bI(x',s) = (I_j(x',s): j\in J)$.
Denote by $I_j^\star(y',s)\subset\reals$ the closed interval centered at $0$,
whose length is equal to the length of $I_j(y',s)$, and $\bI^\star(\by',s)
= (I_j^\star(y'_j,s): j\in J)$.
The Steiner symmetrization $\bI(s)^\dagger$ is
the set of all $(\by',\bt)$ such that $\bt\in \bI(\by',s)^\star$. 

Rewrite $\scriptt_\scriptl(\bE(s))$ as
\begin{align*}  
\scriptt_\scriptl(\bE(s)) 
& =\int_{\Lambda_d}\prod_j\one_{E_j(s)}(y_j) \,d\lambda_d(\by)
\\&
=\int_{\Lambda_{d-1}}\scriptt_{\scriptl^1}(\bI(\by',s)) \,d\lambda_{d-1}(\by')
\\&
= \scriptt_\scriptl(\bE(s)^\dagger) - 
\int_{\Lambda_{d-1}}
\Big[ \scriptt_{\scriptl^1}(\bI(\by',s)^\star) - \scriptt_{\scriptl^1}(\bI(\by',s)) \Big]
\,d\lambda_{d-1}(\by')
\\&
\le  \scriptt_\scriptl(\bEstar)
- \int_{\Lambda_{d-1}}
\Big[ \scriptt_{\scriptl^1}(\bI(\by',s)^\star) - \scriptt_{\scriptl^1}(\bI(\by',s)) \Big]
\,d\lambda_{d-1}(\by')
\end{align*}
Here $\by'\in(\reals^{k-1})^J$, and $\lambda_k$ is an appropriately normalized Lebesgue measure on 
the linear subspace $\Lambda_k$. 
In the final line, we have invoked the Brascamp-Lieb-Luttinger inequality
to conclude that $\scriptt_\scriptl(\bE(s)^\dagger) \le \scriptt_\scriptl(\bE^\star)$.
That same inequality guarantees that the integrand in the final line is nonnegative.

In order to conclude that
$\scriptt_\scriptl(\bE(s)) \le  \scriptt_\scriptl(\bEstar) -cs^2$,
then,
it suffices to show that this integrand is $\ge c's^2$
on some subset of $\Lambda_{d-1}$ whose $\lambda_{d-1}$ measure
is bounded below by a positive constant independent of $s$.
We will do this by invoking the $d=1$--dimensional case of Theorem~\ref{thm:stability},
which is the main result of \cite{christBLL}.\footnote{We invoke the one-dimensional
case of Theorem~\ref{thm:stability} only for intervals, rather than for general measurable
sets.} 
Thus we next verify that hypotheses of that result are satisfied.
These hypotheses are nondegeneracy of $\scriptl^1$, strict admissibility of $(\scriptl^1,\be(\by',s))$,
where $e_j(\by',s) = |I_j(\by',s)|$,
and genericity of $(\scriptl^1,\be(\by',s))$.  
Nondegeneracy of $\scriptl^1$ is one of the hypotheses of Theorem~\ref{thm:stability},
so no discussion of it is needed.
Strict admissibility of $\be(\by')$ is discussed in the next paragraph.
The genericity hypothesis has not yet been encountered in the present paper, and is discussed below. 

$(\scriptl^1,\be(\by',s))$
is strictly admissible relative to $\scriptl^1$
for $(\by',s)=(\bzero,0)$. Indeed,
$e_j(\bzero,0) = \omega_d^{-1/d}|E_j|^{1/d}=\omega_d^{-1/d}e_j^{1/d}$. 
Strict admissibility of $\omega_d^{-1/d}\be^{1/d}$ is equivalent
to strict admissibility of $(\scriptl^1,\be^{1/d})$, which is a hypothesis of Theorem~\ref{thm:stability}. 
Strict admissibility of $(\tilde\scriptl,\tilde\be)$ is stable under small perturbations of $\tilde\be$.
The measures $|I_j(\by',s)|$ depend continuously on $\by',s$.
Therefore for any degree $\nu$, there exists a neighborhood $V$ of $0$ in $\Lambda_{d-1}$,
such that for all sufficiently small $s$ and all $\by'$ in this neighborhood,
$(|I_j(\by',s)|: j\in J)$ is strictly admissible relative to $\scriptl^1$
for all $(\by',s)$ in this neighborhood. 

The result for $d=1$ in \cite{christBLL} includes a genericity hypothesis that is not present in the
theory for $d>1$. $(\scriptl^1,\be)$ is said to be generic if for each extreme point $\bx$ of $\scriptk^1_\be$,
there are exactly $m$ indices $j\in J$ for which $|L_j^1(\bx)|=e_j/2$.
This is a property of $(\scriptl^1,\be)$ only; it is independent of the quantity $s$.
Denote by $\scriptk(\by')$ be the set of all $\bx\in\reals^m$
that satisfy $|L_j^d(\bx,\by')| \le e_j/2$ for all $j\in J$.
Equivalently, $|L_j^1(\bx)| \le (e_j^2/4 - |L_j^{d-1}(\by')|^2)^{1/2}$.
Thus $\scriptk(\by')$ is defined by replacing $e_j$ by $(e_j^2-4|L_j^{d-1}(\by')|^2)^{1/2}$
in its definition. Clearly, the set of all $\by'\in (\reals^{d-1})^J$ 
for which $\scriptk(\by')$ satisfies this definition of genericity,
is open and dense in a neighborhood of $0$, even though it may not contain $0$. 

Thus we may invoke the stability theorem of \cite{christBLL}
to conclude that for all $s$ sufficiently close to $0$,
\begin{equation}
\scriptt_{\scriptl^1}(\bI(\by',s))\leq\scriptt_{\scriptl^1} (\bI(\by',s)^\star)
-c\dist(\bI(\by',s),\scripto(\bI(\by',s)^\star))^2
\end{equation}
for a set $\Omega(s)$ of values of $\by'\in \Lambda_{d-1}$
whose $\lambda_{d-1}$ measure is greater than or equal to 
a positive constant that depends on $\nu,\scriptl,\be$
but may be taken to be independent of $s$.
Here $\scripto(\bI(\by',s))$ denotes the orbit under the translation action
of $\reals^m$; for $d=1$, the action of $\Sl(1)$ is trivial since 
this group consists only of the identity and the reflection about the origin,
and the latter preserves intervals centered at the origin.

The distance from $\bI(\by',s)$ to the orbit of $\bI(\by',s)^\star$ can be usefully 
reformulated in terms of the centers of the component intervals.
Denote by $c_j(x_j',s)$ the center of the interval $I_j(x_j',s)$, 
and let $\bc(\by',s)= (c_j(y'_j,s): j\in J)$.
As was shown in \cite{christRSult},
\begin{equation} \label{eq:cjPj}
c_j(x_j',s)=sP_j(x')+O(s^2),
\end{equation}
where $P_j$ is the polynomial associated to $G_j$ as in \eqref{eq:assocP}.

The distance from $(I_j(\by',s): j\in J)$ to the orbit of $(I_j(\by',s)^\star: j\in J)$
is comparable to $\dist((\bc(\by',s)),\Lambda_1)$.
Therefore
\begin{equation} \label{eq:gainbc}
\scriptt_{\scriptl^1}(\bI(\by',s))\leq\scriptt_{\scriptl^1} (\bI(\by',s)^\star)
-c\dist((\bc(\by',s)),\Lambda_1)^2,
\end{equation}
for all $\by'\in\Omega(s)$.
Combining \eqref{eq:gainbc} with \eqref{eq:cjPj} gives
\begin{equation}
\scriptt_{\scriptl^1}(\bI(\by',s))\leq\scriptt_{\scriptl^1} (\bI(\by',s)^\star)
-c s^2 P_\sharp(\bG)(\by')^2 + O(|s|^3)
\end{equation}
for all $\by'\in\Omega(s)$.

Since the measure of $\Omega(s)$ is bounded below uniformly
in $s$, and since $P_\sharp(\bG)^2$ is a polynomial of degree $\le 2\nu$
that does not vanish identically,
$\int_{\Omega(s)} P_\sharp(\bG)^2\,d\sigma$
is bounded below by a positive constant that is independent of $s$,
though it may depend on $\nu,d,\bG,\scriptl,\be$.

Therefore
\begin{align*}  
\scriptt_\scriptl(\bE(s)) 
&\le  \scriptt_\scriptl(\bEstar)
- \int_{\Lambda_{d-1}}
\Big[ \scriptt_{\scriptl^1}(\bI(\by',s)^\star) - \scriptt_{\scriptl^1}(\bI(\by',s)) \Big]
\,d\lambda_{d-1}(\by')
\\&
\le  \scriptt_\scriptl(\bEstar)
- \int_{\Omega(s)}
\Big[ \scriptt_{\scriptl^1}(\bI(\by',s)^\star) - \scriptt_{\scriptl^1}(\bI(\by',s)) \Big]
\,d\lambda_{d-1}(\by')
\\&
\le  \scriptt_\scriptl(\bEstar)
- cs^2 \int_{\Omega(s)}
P_\sharp(\bG)(\by',s)^2
\,d\lambda_{d-1}(\by') 
+ O(|s|^3)
\\&
\le  \scriptt_\scriptl(\bEstar)
- cs^2 
+ O(|s|^3)
\end{align*}
where $c$ depends on $P_\sharp(\bG)$ but is strictly positive.
Therefore the remainder term $O(|s|^3)$ can be absorbed,
yielding $\scriptt_\scriptl(\bE(s)) \le  \scriptt_\scriptl(\bEstar) - c's^2 $
for some $c'=c'(\bG)>0$,
as was to be shown.

\end{document}